\newcommand{\note}[1]{ \textcolor{blue}{ \ [ \ #1 \  ] \ }} 
\newtheorem{thm}{Theorem}[section]
\newtheorem{definition}[thm]{Definition}
\newtheorem{proposition}[thm]{Proposition}
\newtheorem{corollary}[thm]{Corollary}
\newtheorem{lemma}[thm]{Lemma}
\newtheorem{remark}[thm]{Remark}
\newtheorem{assumption}{Assumption}
\newcommand{\tr}{\textrm{tr}\,}
\newcommand{\sgn}{\textrm{sgn\,}}
\newcommand\SJ[1]{{\color{Black}{#1}}} 
\newcommand\SJJ[1]{{\color{Black}{#1}}} 
\newcommand\KUK[1]{{\color{Black}{#1}}} 
\newcommand\KUKK[1]{{\color{Black}{#1}}} 
\newcommand\WM[1]{{\color{Black}{#1}}} 
\newcommand\KUKKK[1]{{\color{Black}{#1}}} 
\title{Singularly Perturbed Boundary-Equilibrium Bifurcations}
\author[S.~Jelbart]{S.~Jelbart${}^\ast$}\thanks{${}^\ast$Corresponding author. Department of Mathematics, Technical University of Munich, Garching, Bavaria 85748, Germany}
\author[K.~U.~Kristiansen]{K.~U.~Kristiansen${}^\dagger$}\thanks{${}^\dagger$Department of Applied Mathematics and Computer Science, Technical University of Denmark, Lyngby, Kgs.~2800, Denmark}
\author[M.~Wechselberger]{M.~Wechselberger${}^\ddagger$}\thanks{${}^\ddagger$School of Mathematics \& Statistics, University of Sydney, Camperdown, NSW 2006, Australia}
\begin{document}

\begin{abstract}
	\textit{Boundary equilibria bifurcation (BEB)} arises in piecewise-smooth systems when an equilibrium collides with a discontinuity set under parameter variation. \textit{Singularly perturbed BEB} refers to a bifurcation arising in singular perturbation problems which limit as some $\epsilon \to 0$ to \textit{piecewise-smooth (PWS)} systems which undergo a BEB. This work completes a classification for \WM{codimension-1 singularly perturbed BEB} in the plane initiated by the present authors in \cite{Jelbart2020d}, using a combination of tools from PWS theory, \textit{geometric singular perturbation theory (GSPT)} and a method of geometric desingularization known as \textit{blow-up}. After deriving a local normal form capable of generating all 12 singularly perturbed BEBs, we \WM{describe the unfolding in each case}. Detailed quantitative results on saddle-node, Andronov-Hopf, homoclinic and \SJJ{codimension-2} Bogdanov-Takens bifurcations involved in the unfoldings and classification are presented. Each bifurcation is singular in the sense that it occurs within a domain which shrinks to zero as $\epsilon \to 0$ at a rate determined by the rate at which the system loses smoothness. Detailed asymptotics for a distinguished homoclinic connection \SJJ{which forms the} boundary between two singularly perturbed BEBs in parameter space are also given. Finally, we describe the explosive onset of oscillations arising in the unfolding of a particular singularly perturbed \textit{boundary-node (BN)} bifurcation. We prove the existence of the oscillations as perturbations of PWS cycles, and derive a growth rate \SJJ{which is} polynomial in $\epsilon$ and dependent on the rate at which the system loses smoothness. For all the results presented herein, corresponding results for regularized PWS systems are obtained via the limit $\epsilon \to 0$.
	
	\bigskip
	\smallskip
	\noindent \SJJ{\textbf{keywords.} singular perturbations, piecewise-smooth systems, blow-up, boundary-equilibrium bifurcation, regularization
		
	\smallskip
	\noindent \textbf{2000 MSC:} 34A34, 34D15, 34E15, 37C10, 37C27, 37C75}
\end{abstract}

\maketitle



\section{Introduction}

This manuscript concerns the unfolding of singularities in planar singular perturbation problems which limit to piecewise-smooth (PWS) systems. The underlying PWS system is assumed to have a smooth codimension-1 discontinuity set, or \textit{switching manifold} $\Sigma \subset \mathbb R^2$, which has an isolated \textit{boundary equilibrium (BE)}. BEs are PWS singularities which \WM{unfold generically in a codimension-1 bifurcation} known as a \textit{boundary equilibrium bifurcation (BEB)}, whereby an isolated equilibrium collides with the switching manifold $\Sigma$ under parameter variation.


A first classification of planar BE singularities appeared in Filippov's seminal work on discontinuous PWS systems \cite{Filippov1988}. \SJJ{Here it was shown} 
that generically, there are 8 topologically distinct classes of BE singularities, comprised of 2 boundary-saddle (BS), 2 boundary-focus (BF) and 4 boundary-node (BN) singularities. A treatment of the unfolding of these singularities came later in \cite{Kuznetsov2003}, where the authors identify 10 topologically distinct unfoldings, and provide `prototype systems' for each. Subsequently in \cite{Hogan2016}, two more unfoldings were identified, bringing the total count to 12. Here \WM{we} present a single prototype system capable of generating all 12 unfoldings, and a completeness theorem \cite[Theorem 2]{Hogan2016} ruling out the possibility of additional missing cases. Explicit local normal forms (as opposed to `prototypes') for \SJJ{a large number of} BE singularities have been derived in \cite{Carvalho2018}, but \WM{their} work did not treat the unfolding via BEB. 

The notion of \textit{singularly perturbed BEB} was developed more recently in \cite{Jelbart2020d}, for the analysis of smooth singular perturbation problems limiting to PWS systems with a BF bifurcation. The motivation to study smooth perturbations of PWS systems arises from the observation that PWS systems often serve as approximations for smooth dynamical systems with abrupt transitions in phase space. Hence, it is natural to consider a class of smooth singular perturbation problems, which limit to PWS systems that are discontinuous along a switching manifold $\Sigma$ as a perturbation parameter $\epsilon \to 0$. Abrupt dynamical transitions in such systems occur within an $\epsilon-$dependent neighbourhood $U_\epsilon \subset \mathbb R^2$ about $\Sigma$ known as the \textit{switching layer}, which satisfies $U_\epsilon \to \Sigma$ as $\epsilon \to 0$. It is important to note that singular perturbation problems in this class can arise either (i) naturally, or (ii) by a process of \textit{regularization} whereby a modeller `smooths out' discontinuity in a PWS system. In the former case, the problem is given as a smooth singular perturbation problem with a PWS singular limit; see e.g.~\cite{Jelbart2019c,Kristiansen2019d} for applications of this kind. In the latter case, the PWS system is given, and the modeller introduces a method of regularization based on the characteristics of the problem at hand; many examples of this kind can be found in \cite{Jeffrey2018}. In both cases, analytical techniques from PWS systems and \textit{Geometric Singular Perturbation Theory (GSPT)} \cite{Jones1995,Kuehn2015,Wechselberger2019}, in combination with a method of geometric desingularization known as the \textit{blow-up method} \cite{Dumortier1996,Krupa2001b}, \SJ{provide a powerful analytical framework;} 
see e.g.~\cite{Bonet2016,Buzzi2006,Guglielmi2015,Jeffrey2018,Kristiansen2015b,Kristiansen2015a,Kristiansen2019,Kristiansen2017,Kristiansen2019d,Llibre2007,Sotomayor1996,Teixeira2012}. It is worthy to note that the authors in \cite{Carvalho2011} consider a large number of BEBs in the context of regularized PWS systems, however the degeneracy associated with the BE singularity is not fully resolved.

\

The present manuscript provides a classification and detailed dynamical study of singularly perturbed BEBs in the plane. 
The work can be seen as a continuation of recent work in \cite{Jelbart2020d}, see also the PhD thesis \cite{Jelbart2020b}, 
\WM{where the analysis was} restricted to a subset of singularly perturbed BF bifurcations, treating 3 of the total 12 BE unfoldings in detail, and successfully resolving the degeneracies associated with these cases. This manuscript aims to complete the project, by providing a `complete' description for all 12 unfoldings. 
Similarly to \cite{Jelbart2020b,Jelbart2020d}, emphasis is placed on understanding the smooth dynamics with $0 < \epsilon \ll 1$. This allows for the treatment of problems arising \SJ{either} naturally or via regularization simultaneously, since the corresponding results for (regularized) PWS systems are easily obtained upon taking the non-smooth singular limit $\epsilon \to 0$.

First, we show that the $C^{r\geq 1}$ local normal form derived for singularly perturbed BF bifurcations in \cite{Jelbart2020d} is in fact sufficient to generate all 12 unfoldings. A corresponding PWS local normal form is obtained from this expression in the non-smooth singular limit $\epsilon \to 0$.

We then study all 12 unfoldings for $0 < \epsilon \ll 1$. As found in the analysis of singularly perturbed BF bifurcations in \cite{Jelbart2020d}, each unfolding typically involves singular bifurcations, in some cases codimension-2, occurring within an $\epsilon-$dependent domain which shrinks to zero as $\epsilon \to 0$ at a rate which can be quantified explicitly in terms of 
\SJ{rate at which the system loses smoothness}. We present 2-parameter bifurcation diagrams for a desingularized system with $\epsilon = 0$ 
which determines the qualitative dynamics for $0 < \epsilon \ll 1$. It is worthy to note that within the class of smooth monotonic regularizations considered, the dynamics are shown to be qualitatively determined by the underlying PWS problem, i.e.~the bifurcation structure is qualitatively independent of the choice of regularization, and determined by the type of PWS unfolding in the limit $\epsilon \to 0$. It is shown how the choice of regularization does, however, effect the dynamics quantitatively, particularly due to its determination of the rate at which the system loses smoothness as $\epsilon \to 0$.

Following an analysis of the unfoldings, we present new results on the asymptotics of \SJ{distinguished homoclinic solutions corresponding to boundaries between singularly perturbed BF$_1$ and BF$_2$ bifurcations.} 
\KUKKK{Finally, special attention is devoted to the singularly perturbed BN$_3$ bifurcation, which provides the necessary local mechanisms for the onset of relaxation-type oscillations. This was first observed in \cite{Kristiansen2019d} in the context of substrate-depletion oscillations. 
{Whereas emphasis there was on the existence of relaxation-type oscillations for the specific model, we will in the present work identify and describe the \textit{explosive} onset of oscillations in the case of generic singularly perturbed BN$_3$ bifurcations.} 
Similarly to the \textit{canard explosion} phenomena known to occur in classical slow-fast systems \cite{Dumortier1996,Kuehn2015,Krupa2001b}, we will show that limit cycles in the singularly perturbed BN$_3$ bifurcation perturb from a continuous family of singular cycles, i.e.~closed concatenated orbits having segments along a critical manifold. However, the local mechanism for the onset of explosive dynamics differs from that of classical canard explosion, and functions without the need for canard solutions. 
In contrast to the exponential growth rate associated with classical canard explosion, we show that the growth rate of the cycles arising in singularly perturbed BN$_3$ is polynomial in $\epsilon$. We quantify this growth rate in terms of properties of the regularization.}

\

The manuscript is structured as follows: Basic definitions and setup are introduced in Section \ref{sec:setup}. The $C^{r\geq 1}$ local normal form capable of generating all 12 unfoldings for $0<\epsilon\ll1$, as well as the resulting smooth and PWS classifications are also given in Section \ref{sec:setup}. Main results are presented in Section \ref{sec:results}. Specifically, the blow-up analysis is outlined in Section \ref{ssec:results_blow-up}, unfoldings and corresponding 2-parameter bifurcation diagrams are presented in Section \ref{ssec:results_bifurcatons}, asymptotic results on boundary separatrices are presented in Section \ref{ssec:results_separatrices}, and results on the singularly perturbed BN$_3$ explosion are given in Section \ref{ssec:results_bn3}. Main results on the unfolding and boundary separatrices are proved in Section \ref{sec:proofs}, and a proof for the results pertaining to BN explosion are presented in Section \ref{sec:thm_connection_proof}. Finally in Section \ref{sec:Outlook}, we conclude and summarise our findings.


\section*{Acknowledgement}

\SJ{The first author acknowledges partial funding from \SJJ{the SFB/TRR 109 Discretization and Geometry in Dynamics grant}, and together with the third author, partial funding from the \WM{ARC Discovery Project grant DP180103022}.}

\KUK{The second author is grateful for his discussions with Peter Szmolyan on the explosive growth of limit cycles for the boundary node case in the context of the substrate-depletion oscillator.}

\section{Setup and normal form}
\label{sec:setup}

\subsection{Setup}

The setup is taken from \cite{Kristiansen2019c}, which has also been adopted in \cite{Jelbart2020b,Jelbart2020d,Kristiansen2019d}. We consider planar systems
\begin{equation}
\label{eq:general}
\dot z = Z\left(z,\phi\left(y \epsilon^{-1}\right),\alpha\right) ,
\end{equation}
where $z=(x,y) \in \mathbb R^2$, $\phi:\mathbb R\to\mathbb R$, $\epsilon \in (0,\epsilon_0]$, and $\alpha \in I \subset \mathbb R$. The vector field $Z:\mathbb R^2 \times \mathbb R \times I \to \mathbb R^2$ is assumed to be smooth in all arguments, but generically non-smooth in the limit $\epsilon \to 0$. 

\begin{assumption}
	\label{assumption:1}
	The map $p \mapsto Z(z, p, \alpha)$ is affine, i.e.
	\begin{equation}
	\label{eq:regularized}
	Z(z, p, \alpha) = p Z^+(z, \alpha) + (1 - p) Z^-(z, \alpha) ,
	\end{equation}
	where the vector fields $Z^\pm : \mathbb R^2 \times I \to \mathbb R^2$ are smooth.
\end{assumption}
\begin{assumption}
	\label{assumption:2}
	The smooth `regularization function' $\phi : \mathbb R \to \mathbb R$ satisfies the monotonicity condition
	\[
	\frac{\partial \phi(s)}{\partial s} > 0 ,
	\]
	for all $s \in \mathbb R$ and, moreover,
	\begin{equation}
	\label{eq:mono}
	\phi(s) \to
	\begin{cases}
	1  & \text{for } s \to \infty , \\
	0  & \text{for } s \to -\infty .
	\end{cases}
	\end{equation}
\end{assumption}

It follows from Assumption \ref{assumption:1} and the form of the (non-uniform) limit in \eqref{eq:mono} that system \eqref{eq:general} is (generically) PWS in the singular limit $\epsilon \to 0$. In particular, the limiting system
\begin{equation}
\label{eq:PWS_main}
\dot z = 
\begin{cases}
Z^+(z,\alpha) \qquad \text{if } y > 0, \\
Z^-(z,\alpha) \qquad \text{if } y < 0,
\end{cases}
\end{equation}
is PWS and (generically) discontinuous along the switching manifold
\begin{equation}
\label{eq:Sigma}
\Sigma = \left\{(x,y) : f_\Sigma(x,y) = y = 0 \right\} .
\end{equation}

\begin{remark}
	The more general scenario where $\Sigma = \{z\in\mathbb R^2:f_\Sigma(z)=0\}$ for any smooth function $f_\Sigma:\mathbb R^2 \to \mathbb R$ such that $Df_\Sigma|_{\Sigma} \neq (0,0)$, can easily be incorporated into the preceding formalism by replacing $y$ with $f_\Sigma(z)$ in system \eqref{eq:general} and adjusting Assumptions \ref{assumption:1} and \ref{assumption:2} accordingly. Since we restrict to a local analysis throughout, we may assume that $f_\Sigma(z)=y$ without loss of generality.
\end{remark}

Notice that system \eqref{eq:PWS_main} can be `regularized' via \eqref{eq:regularized} with $p=\phi(y \epsilon^{-1})$. 
Hence, system \eqref{eq:general} can be viewed as either of the following:
\begin{itemize}
	\item A smooth singularly perturbed system with a PWS singular limit;
	\item A smooth regularization of the PWS system \eqref{eq:PWS_main}.
\end{itemize}
In this work we shall prioritise the former interpretation, since (i) this case is treated in less detail so far in the literature, 
and (ii) findings pertinent to the latter case can be immediately inferred from the dynamics of the nearby smooth system upon taking the limit $\epsilon \to 0$.

We impose one more technical assumption, which restricts the class of regularization functions $\phi$:
\begin{assumption}
	\label{assumption:3}
	The regularization function $\phi(s)$ has algebraic decay as $s \to \pm\infty$, i.e. there exist $k_\pm \in \mathbb N_+$ and smooth functions $\phi_\pm:[0,\infty] \to [0,\infty)$ such that
	\begin{equation}
	\label{eq:reg_asymptotics}
	\phi(s)=
	\begin{cases}
	1 - s^{-k_+} \phi_+\left(s^{-1} \right) , &\qquad s >0\,,\\
	(- s)^{-k_-} \phi_-\left((-s)^{-1} \right) , &\qquad  s<0\,,
	\end{cases}
	\end{equation}
	and
	\begin{equation}
	\label{eq:beta}
	\beta_\pm := \phi_\pm(0) > 0 . 
	\end{equation}
\end{assumption}

Assumption \ref{assumption:3} restricts to the class of regularization functions with algebraic decay toward $0,1$, and is natural in the context of general systems \eqref{eq:general} with analytic or sufficiently smooth right-hand-side. Specifically, it follows that both mappings $u\mapsto \phi(\pm u^{-1})$ for $u>0$ have well-defined Taylor expansions at $u=0$, which are each nondegenerate in the sense that there are leading nonzero terms ($1-u^{k_+} \beta_+$ and $u^{k_-} \beta_-$, respectively) at order $k_\pm$, respectively. Note this assumption precludes regularizations like $\phi(s)=\tanh(s)$ or $\phi(s)=e^s/(1+e^s)$, which have exponential decay toward $0,1$ and thus $k=\infty$. We omit the rigorous treatment of these cases, 
but refer to \cite{Jelbart2019c,Kristiansen2017} for details on how to handle \SJ{non-algebraic asymptotics using an adaptation of the blow-up method}.

\begin{remark}
	Regularisation functions $\phi$ which satisfy Assumptions \ref{assumption:2} and \ref{assumption:3} can be analytic, and should be distinguished from the well known class of non-analytic Sotomayor-Teixeira (ST) regularizations. In particular, the regularizations considered herein do not feature an artificial cutoff at the boundary to the switching layer.
\end{remark}

\subsection{PWS preliminaries}

It follows from our assumptions that the PWS system \eqref{eq:PWS_main} is Filippov-type \cite{Filippov1988}. In particular, sliding and crossing regions of $\Sigma$ can be determined in accordance with their usual definitions.
\begin{definition}
	Given system \eqref{eq:PWS_main} and a point $p \in \Sigma$. Then $p \in \Sigma$ is called a crossing (sliding) point if the quantity
	\begin{equation}
	\label{eq:crossing}
	\left(Z^+f(p)\right) \left(Z^-f(p)\right) 
	\end{equation}
	is positive (negative), where 
	$Z^\pm f(\cdot) = \langle \nabla f(\cdot), Z^+(\cdot,\alpha) \rangle$ denotes a Lie derivative. We denote the set of crossing (sliding) points by $\Sigma_{cr}$ ($\Sigma_{sl}$). 
\end{definition}

It follows from our assumptions on $\phi$ 
that \WM{the sliding/Filippov vector field described} as a convex combination in \cite{Filippov1988} can be written as
\begin{equation}
\label{eq:sliding_VF}
\dot z = - \left((Z^+ - Z^-)(f_\Sigma)(z) \right)^{-1} \left[Z^+, Z^-\right] (f_\Sigma) (z)  , \qquad z \in \Sigma_{sl} ,
\end{equation}
where 
$[Z^+,Z^-]$ denotes a Lie bracket. If $f_\Sigma(x,y)=y$ as in \eqref{eq:Sigma}, then the sliding/Filippov vector field is given in the $x-$coordinate chart by
\begin{equation}
\label{eq:Filippov_x}
\dot x = \frac{\det\left(Z^+(x,0)|Z^-(x,0)\right)}{Z_2^-(x,0)-Z_2^+(x,0)} = Z_{sl}(x,0) , \qquad (x,0) \in \Sigma_{sl},
\end{equation}
where $\det(Z^+(x,0)|Z^-(x,0))$ denotes the determinant of the 2$\times$2 matrix with columns $Z^+(x,0)$, $Z^-(x,0)$.

Sliding trajectories can leave $\Sigma_{sl}$ at a point of tangency with either vector field $Z^{\pm}$. Depending on the order of the tangency, such a point may also separate sliding and crossing regions of $\Sigma_{sl}$. The following definition characterises the least degenerate case, i.e.~quadratic tangency with either $Z^{\pm}$.

\begin{definition}
	\label{def:fold}
	Given system \eqref{eq:PWS_main} and a point $F \in \Sigma$. Then $F$ is a fold point if either
	\begin{equation}
	\label{eq:fold}
	Z^+f(F) = 0 , \quad Z^+(Z^+f)(F) \neq 0 , \quad \text{or} \quad Z^-f(F) = 0 , \quad Z^-(Z^-f)(F) \neq 0 .
	\end{equation}
	A fold point $F$ with $Z^+f(F) = 0$ is visible (invisible) if the inequality $Z^+(Z^+f)(F) \neq 0$ is positive (negative). Conversely, a fold point $F$ with $Z^-f(F) = 0$ is visible (invisible) if the inequality $Z^-(Z^-f)(F) \neq 0$ is negative (positive).
\end{definition}

\begin{figure}[t!]
	\centering
	\includegraphics[scale=0.205]{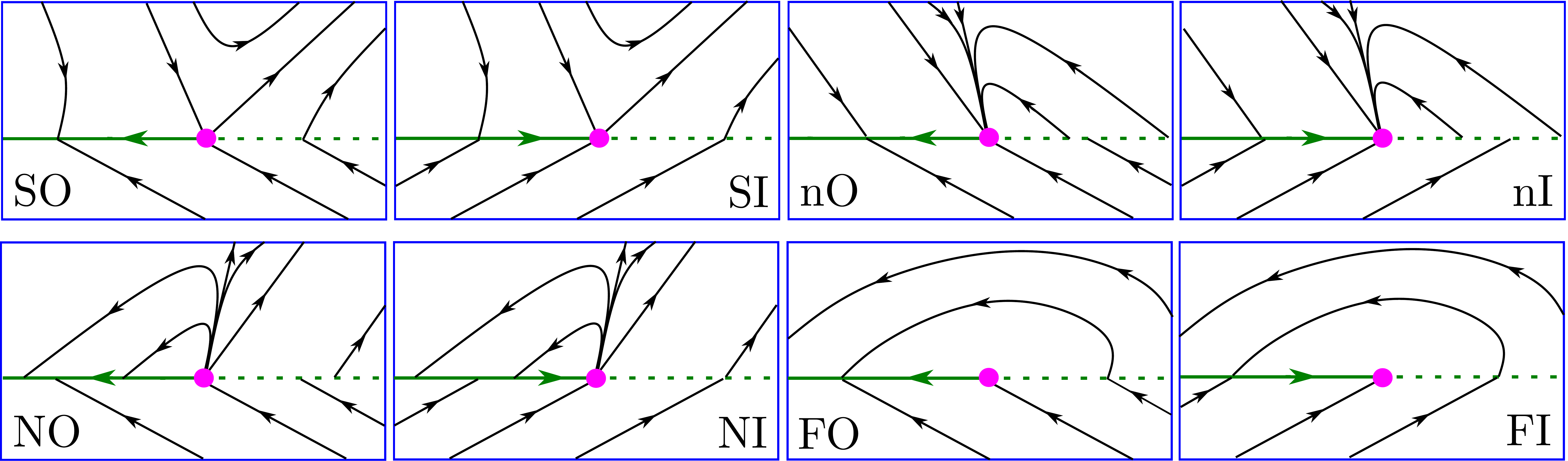}
	\caption{The 8 BE singularities arising in Filippov's topological classification \cite{Filippov1988}. The switching manifold $\Sigma=\{y=0\}$ is shown in green, with sliding/crossing submanifolds in bold/dashed respectively. We adopt the labelling convention in \cite{Hogan2016} with S, n, N, F denoting saddle, stable node, unstable node, focus respectively, and I/O denoting inward/outward flow along $\Sigma_{sl}$. \SJJ{We have chosen an orientation such} \SJJ{that the $\Sigma_{sl}$ always lies to the left.}}
	\label{fig:be}
\end{figure}

It remains to review the notion of BE singularities and BEB. BE singularities arise when one or both of the vector fields $Z^\pm(z_{be},\alpha_{be})=(0,0)^T$ for some \SJ{$z_{be} \in \Sigma$ and parameter value $\alpha = \alpha_{be}$}. We consider the least degenerate case in which \SJ{$z_{be} \in \Sigma$} is a hyperbolic equilibrium of \SJ{$Z^+(\cdot, \alpha_{be})$}, and $Z^-$ is locally transverse to $\Sigma$. Filippov showed in \cite{Filippov1988}, see also \cite{Hogan2016}, that there are 8 topologically distinct cases depending on:
\begin{itemize}
	\item The type of equilibrium (focus, node or saddle);
	\item The orientation of the sliding dynamics (towards or away from \SJ{$z_{be}$});
	\item In the case that \SJ{$z_{be}$ is a node of $Z^+(\cdot,\alpha_{be})$}, its asymptotic stability (stable or unstable);
\end{itemize}
see Figure \ref{fig:be}. As described in \cite{Hogan2016}, the 8 cases can be neatly categorised if we let S, n, N, F denote `saddle', `stable node', `unstable node', `focus' respectively, and let I/O define inward/outward sliding flow \WM{(i.e.~towards or away from $z_{be}$)}. Then the possible cases are: SO, SI, nO, nI, NO, NI, FO and FI.

BE singularities unfold generically under parameter variation in a BEB. Below we provide a formal definition for BEB in general PWS systems \eqref{eq:PWS_main}. 

\begin{definition}
	\label{def:beb}
	The PWS system \eqref{eq:PWS_main} has a BEB at $z = z_{bf} = (x_{bf},0) \in \Sigma$ for $\alpha = \alpha_{bf}$ if $Z^+(z_{bf},\alpha_{bf}) = (0,0)^T$ and the following nondegeneracy conditions hold:
	\begin{equation}
	\label{eq:beb_conds}
	Z_2^-(z_{bf},\alpha_{bf}) \neq 0 , \ \ \det \left(\frac{\partial Z^+}{\partial \alpha} \big| \frac{\partial Z^+}{\partial x} \right)\bigg|_{(z_{bf},\alpha_{bf})} \neq 0 , \ \ \frac{\partial Z_{sl}}{\partial x}\bigg|_{(x_{bf},\alpha_{bf})} \neq 0 ,
	\end{equation}
	where $Z^\pm = (Z^\pm_1,Z^\pm_2)^T$ and $\det (X | Y)$ denotes the determinant of the matrix with columns $X,Y$.
	
	\SJ{Let $\lambda_\pm$ 
	and $v_\pm$} 
	denote the eigenvalues and corresponding eigenvectors of the Jacobian $(\partial Z^+/\partial z)|_{(z_{bf},\alpha_{bf})}$. We distinguish the following cases:
	\begin{enumerate}
		\item[(BF)] 
		\SJ{$\lambda_\pm 
		 = A \pm i B$} for $A , B \in \mathbb R\setminus\{0\}$, {\em (boundary-focus);}
		\item[(BN)] 
		\SJ{$\lambda_+ 
		 / \lambda_- 
		   > 0$ and $v_\pm 
		   $} are transversal to $\Sigma$, {\em (boundary-node);}
		\item[(BS)] 
		\SJ{$\lambda_+ 
		 / \lambda_- 
		   < 0$ and $v_\pm 
		    $} are transversal to $\Sigma$, {\em (boundary-saddle).}
	\end{enumerate}
\end{definition}

The topological classification in \cite{Hogan2016} shows that generically, the 8 BE bifurcations in Figure \ref{fig:be} unfold in 12 topologically distinct BEBs. Specifically, there are 
5 BF bifurcations, 4 BN bifurcations and 3 BS bifurcations. We shall label these by BF$_i$, BN$_i$ and BS$_i$ for $i \in \{1,\ldots, 5\}$, $i \in \{1,\ldots,4\}$ or $i \in \{1,2,3\}$ respectively, in accordance with the notational conventions from \cite{Kuznetsov2003}. The two unidentified BN bifurcations later described in \cite{Hogan2016} will be denoted BN$_3$ and BN$_4$. The BN$_3$ unfolding \WM{is} of particular interest in this work \WM{and} shown in Figure \ref{fig:bn3}. The fact that there may be more than one unfolding per BE is a consequence of the relative positioning of separatrices; topologically distinct BEBs can be separated by so-called `double separatrices' \cite{Hogan2016} which connect equilibria and points of tangency on $\Sigma$. The role of separatrices is also discussed in e.g.~ \cite{Bernardo2008,Guardia2011}. 

\begin{figure}[t!]
	\centering
	\includegraphics[scale=.27]{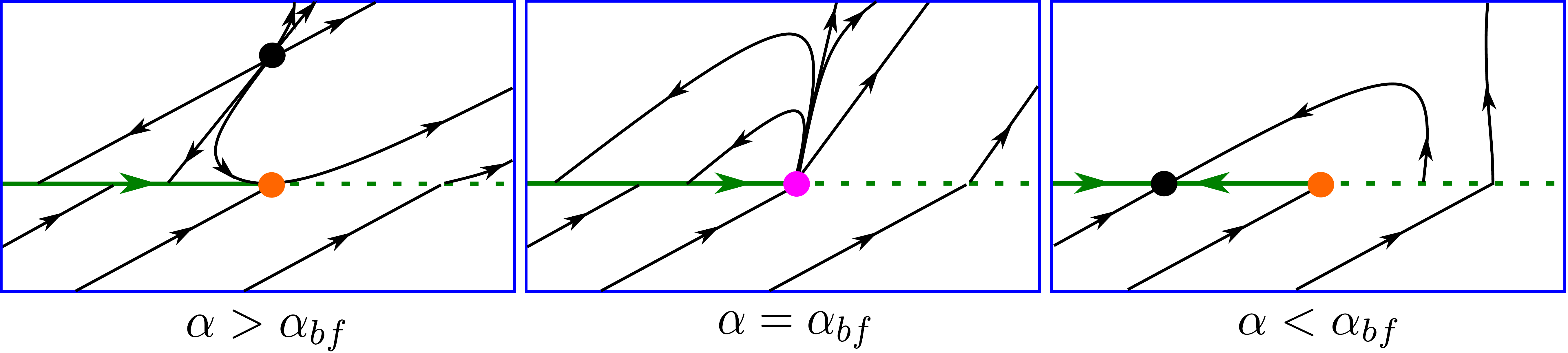}
	\caption{Unfolding of the NI BE (c.f.~Figure \ref{fig:be}) in a BN$_3$ bifurcation.}
	\label{fig:bn3}
\end{figure}

\begin{remark}
	The determinant condition in \eqref{eq:beb_conds} ensures that the equilibrium of \SJ{$Z^+(\cdot,\alpha)$} collides with $\Sigma$ transversally under variation in $\alpha$. To see this, notice that in the extended $(z,\alpha)-$space, the vector $T_{bf} := (\nabla Z_1^+ \wedge \nabla Z_2^+)|_{(z_{bf},\alpha_{bf})}$ is tangent to the curve \KUKK{defined} implicitly by $Z^+(z,\alpha) = (0,0)^T$. The stated determinant condition follows by the requirement that $T_{bf}$ has a non-zero $y-$component.
\end{remark}

Finally, we introduce the notion of singularly perturbed BEB.

\begin{definition}
	We say that system \eqref{eq:general} under Assumptions \ref{assumption:1} \SJ{and} \ref{assumption:2} has a singularly perturbed BEB if the PWS system \eqref{eq:PWS_main} obtained in the singular limit $\epsilon \to 0$ has a BEB. Notions of singularly perturbed BF bifurcation, singularly perturbed BN bifurcation and singularly perturbed BS bifurcation are similarly defined.
\end{definition}

By definition, the existence of 12 BEBs implies the existence of 12 singularly perturbed BEBs.



\subsection{Normal form and classification}

We show that the normal form derived for singularly perturbed BF bifurcations in \cite{Jelbart2020d} generalises to a single normal form capable of generating all 12 singularly perturbed BEBs. 


\begin{thm}
	\label{theorem:prop_normal_form}
	Consider system \eqref{eq:general} under Assumptions \ref{assumption:1}, \ref{assumption:2}, 
	and assume that the PWS system \eqref{eq:PWS_main} obtained in the limit $\epsilon \to 0$ has a BEB of type BF, BN or BS at $z_{bf} \in \Sigma$ when $\alpha = \alpha_{bf}$. Then there exists constants
	\[
	\tau \in \mathbb R \setminus \{0\}, \qquad  \delta \in \mathbb R \setminus \{0,\tau^2 / 4\} , \qquad \gamma \in \mathbb R ,
	\]
	such that system \eqref{eq:general} can be smoothly transformed, up to a reversal of orientation, into the local normal form
	\begin{equation}
	\label{eq:normal_form}
	\begin{pmatrix}
	\dot{x}\\\dot{y}
	\end{pmatrix}
	=
	\begin{pmatrix}
	\tau - \gamma + \phi\left(y \epsilon^{-1} \right) \left(\gamma - \tau + \mu + \tau x - \delta y + \theta_1(x,y,\mu) \right) \\
	1 + \phi\left(y \epsilon^{-1} \right) \left(-1 + x + \theta_2(x,y,\mu) \right)
	\end{pmatrix}
	= :X(x,y,\mu,\epsilon) ,
	\end{equation}
	where $\theta_i(x,y,\mu)$, $i=1,2$ are real-valued smooth functions such that
	\[
	\theta_1(x,y,\mu) = 
	\mathcal O(x^2,xy,y^2,x\mu,y\mu,\mu^2) , \qquad 
	\theta_2(x,y,\mu) = \mathcal O \left(x^2, xy, y^2, x \mu, y \mu \right) ,
	\]
	and $\mu$ is a new bifurcation parameter related to $\alpha$ via $\mu = g(\alpha)$, where $g: I_\alpha \to \mathbb R$ is a smooth function such that $g(\alpha_{bf}) = 0$ and $g'(\alpha_{bf}) \neq 0$.
	
	The PWS system
		\begin{equation}
		\label{eq:PWS_normal_form}
		\begin{pmatrix}
		\dot{x}\\\dot{y}
		\end{pmatrix}
		=
		\left\{
		\begin{aligned}
		&
		\begin{pmatrix}
		\mu + \tau x - \delta y + \theta_1(x,y,\mu)  \\
		x + \theta_2(x,y,\mu) 
		\end{pmatrix}
		=: X^+(x,y,\mu),
		& \quad(y>0) ,
		\\
		&
		\begin{pmatrix}
		\tau - \gamma  \\
		1 
		\end{pmatrix}
		=: X^-(x,y,\mu),
		&\quad(y<0) ,
		\end{aligned}
		\right.
		\end{equation}
	obtained from \eqref{eq:normal_form} in the limit $\epsilon \to 0^+$ has a BEB at the origin for $\mu = 0$, and a Filippov/sliding vector field given by
	\begin{equation}
	\label{eq:Filippov_VF}
	\dot x = \frac{\mu + \gamma x + \theta_1(x,0,\mu) - (\tau - \gamma) \theta_2(x,0,\mu)}{1 - x - \theta_2(x,0,\mu)} =: X_{sl}(x,\mu) , \qquad (x,0) \in \Sigma_{sl} .
	\end{equation}
\end{thm}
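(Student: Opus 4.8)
The plan is to exploit the affine-in-$\phi$ structure of Assumption~\ref{assumption:1}: system~\eqref{eq:general} is completely determined by the smooth pair $(Z^+,Z^-)$, and any transformation that preserves both the switching manifold $\Sigma=\{y=0\}$ and the argument $y\epsilon^{-1}$ of $\phi$ acts on this pair field-by-field and commutes with the limit $\epsilon\to0$. Consequently it suffices to normalise the PWS data $(Z^+,Z^-)$; the smooth normal form~\eqref{eq:normal_form} then holds for every $\epsilon$ simultaneously, and the PWS form~\eqref{eq:PWS_normal_form} together with its Filippov field drop out for free in the limit. Since no step will rescale $y$, the factor $\phi(y\epsilon^{-1})$ is left literally invariant throughout. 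The admissible moves are therefore: (i) $\Sigma$-preserving reparametrisations of the $x$-coordinate that leave $y$ untouched; (ii) smooth, possibly sign-changing, time rescalings, with a negative factor accounting for the ``reversal of orientation'' in the statement; and (iii) a reparametrisation $\mu=g(\alpha)$.

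\textbf{Normalising $Z^-$.} First I would translate $z_{bf}$ to the origin, so that $Z^+(0,\alpha_{bf})=0$ with $Z^-$ transverse to $\Sigma$. Rescaling time by $1/Z_2^-$, which is well defined and nonzero near the origin by the first condition in~\eqref{eq:beb_conds}, normalises $Z_2^-\equiv1$ (its sign is absorbed into the orientation reversal); this scales $Z^+\mapsto Z^+/Z_2^-$ but preserves orbits, $\Sigma$, and the affine structure. I would then straighten the first component of $Z^-$ to a constant by solving the linear transport equation $\langle\nabla X,(Z_1^-,1)\rangle=\mathrm{const}$ along the integral curves of $Z^-$ (transverse to $\Sigma$), taking the free Cauchy data for $X$ on $\Sigma$ as the new parametrisation of the switching manifold. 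Since this redefines only $x$, it leaves $y$ and $Z_2^-=1$ intact and yields $Z^-\equiv(\,\mathrm{const},1)$.

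\textbf{Normalising $Z^+$ (the crux).} The heart of the argument is to bring $D_z Z^+|_0$ into the companion form $\left(\begin{smallmatrix}\tau&-\delta\\1&0\end{smallmatrix}\right)$ under the very restricted group of remaining moves. The enabling nondegeneracy is $c:=\partial_x Z_2^+|_0\neq0$: in the BF case the eigenvalues are non-real, which already forces $c\neq0$ (a real matrix with $c=0$ is upper triangular and has real spectrum), while in the BN and BS cases $c=0$ would make $(1,0)$, the direction of $\Sigma$, an eigenvector, contradicting the transversality of $v_\pm$ to $\Sigma$ in Definition~\ref{def:beb}. Granted $c\neq0$, the $\Sigma$-preserving linear conjugation by $M=\left(\begin{smallmatrix}c&d\\0&1\end{smallmatrix}\right)$ with $d=\partial_y Z_2^+|_0$ sends the bottom row of $D_z Z^+|_0$ to $(1,0)$, fixes $y$, and keeps $Z^-$ constant; invariance of trace and determinant then forces the top row to be $(\tau,-\delta)$ with $\tau=\tr D_z Z^+|_0$ and $\delta=\det D_z Z^+|_0$, and I define $\gamma$ through the resulting constant value $Z_1^-=\tau-\gamma$. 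To fix the $\alpha$-dependence I would translate the origin along $\Sigma$ to follow the curve $\{Z_2^+(\cdot,0,\alpha)=0\}$ (well defined by the implicit function theorem since $c\neq0$), so that $Z_2^+(0,0,\mu)\equiv0$ and no pure powers of $\mu$ appear in $\theta_2$; then I choose $\mu=g(\alpha)$ to normalise $\partial_\mu Z_1^+|_0=1$, which is possible precisely because the determinant condition in~\eqref{eq:beb_conds} is equivalent, after these normalisations, to $\partial_\alpha Z_1^+|_0\neq0$. All remaining terms are collected into $\theta_1,\theta_2$, whose stated orders follow from the normalisations just made.

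\textbf{Constants, limit, and obstacle.} It then remains to check the constraints on the constants and to pass to the limit. Hyperbolicity of the equilibrium gives $\delta=\det\neq0$, while the reality and sign of the eigenvalues give $\delta\neq\tau^2/4$ and $\tau\neq0$ in each case (non-real eigenvalues with nonzero real part for BF; same-sign real eigenvalues, hence $\tau\neq0$, with $\delta=\tau^2/4$ excluded since it would render the node improper or stellar and destroy transversality of $v_\pm$, for BN; opposite-sign real eigenvalues for BS, the balanced saddle $\tau=0$ being excluded as nongeneric). A short computation shows $\partial_x Z_{sl}|_0=\gamma$, so the third condition in~\eqref{eq:beb_conds} reads $\gamma\neq0$, which is exactly what makes the limiting PWS system undergo a genuine BEB. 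Finally, letting $\epsilon\to0^+$ and using the pointwise limits~\eqref{eq:mono} turns~\eqref{eq:normal_form} into~\eqref{eq:PWS_normal_form}, and inserting $X^\pm$ into the Filippov formula~\eqref{eq:Filippov_x} reproduces~\eqref{eq:Filippov_VF} after a routine $2\times2$ determinant. I expect the main obstacle to be the companion-form reduction: reaching it under a transformation group that can only reparametrise $x$, shear, and rescale time works solely because the boundary-equilibrium type forces $c=\partial_x Z_2^+|_0\neq0$.
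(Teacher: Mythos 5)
Your proposal is correct and follows essentially the same route as the paper's proof in Appendix \ref{app:normal_form}: a translation to the origin, time rescaling by $Z_2^-$ (with the sign absorbed into the orientation reversal), a $y$-preserving rectification of $Z^-$ along its orbits, a $\Sigma$-preserving linear shear exploiting $c=\partial_x Z_2^+|_0\neq 0$ to reach companion form, a $\mu$-dependent shift along $\Sigma$ to remove pure $\mu$-terms from $\theta_2$, and the reparametrisation $\mu=g(\alpha)$ justified by the determinant condition in \eqref{eq:beb_conds}. Your explicit case-by-case argument that $c\neq0$ (non-real spectrum in case BF; transversality of $v_\pm$ to $\Sigma$ in cases BN and BS) is in fact slightly more detailed than the paper's footnoted remark, but it is the same mechanism.
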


\begin{proof}
	The proof is similar to \SJJ{derivation of the normal form for singularly perturbed BF bifurcations presented in} \cite[p.38]{Jelbart2020d}, and deferred to Appendix \ref{app:normal_form} for brevity.
\end{proof}

\begin{remark}
	Note the qualifier ``up to a reversal of orientation" in Theorem \ref{theorem:prop_normal_form}. Orientation should be reversed if the vector field component $Z_2^-(z,\alpha)$ in system \eqref{eq:general} satisfies $Z^-_2(0,0) < 0$.
\end{remark}





A classification of singularly perturbed BEBs with $0 < \epsilon \ll 1$ can be given via the classification of the underlying PWS system for $\epsilon \to 0$. 
This approach is similar to the classification of singularities in slow-fast systems in terms of their `singular imprint' for $\epsilon = 0$.

Similarly to the prototype system given in \cite{Hogan2016}, the PWS normal form \eqref{eq:PWS_normal_form} can be used to generate all 12 BEBs by a suitable restriction of parameters in the PWS normal form \eqref{eq:PWS_normal_form}. Each unfolding can be identified with an open region in $(\tau,\delta,\gamma)-$parameter space determined by the quantities
\[
\tau, \ \delta, \ \Delta := \tau^2 - 4\delta , \ \gamma .
\]
Double-separatrices which connect a visible fold point with an equilibrium on $\Sigma_{sl}$ also play a role in separating regions corresponding to BF$_{1,2}$, and regions corresponding to BS$_{1,2}$. Here the distinction lies in whether or not the separatrix emanating from the fold point connects to the region $\tilde \Sigma_{sl} \subset \Sigma_{sl}$ which is bounded between the fold point and the equilibrium. The resulting classification, which is equivalent to that in \cite[Table 1]{Hogan2016}, is given in Table \ref{tab:classification}.

\begin{table}
	\centering
\begin{tabularx}{\textwidth}{ccccccc}
	\textbf{Bifurcation} \ & \ \textbf{Singularity} \ &  \ \ \ $\tau$  \ \ \ & \ \ \ $\delta$ \ \ \ & \ \ \ $\Delta$ \ \ \ & \ \ \ $\gamma$ \ \ \ & \ \textbf{Separatrix} \ \\ 
\hline
BS$_1$ & SI &   & - & + & - & Does not hit $\tilde \Sigma_{sl}$ \\
BS$_2$ & SI &   & - & + & - & Hits $\tilde \Sigma_{sl}$ \\
BS$_3$ & SO &   & - & + & + & \\
BN$_1$ & nI & - & + & + & - & \\
BN$_2$ & NO & + & + & + & + & \\
BN$_3$ & NI & +& + & + & - & \\
BN$_4$ & nO & - & + & + & + & \\
BF$_1$ & FO & + & + & - & + & Hits $\tilde \Sigma_{sl}$ \\
BF$_2$ & FO & + & + & - & + & Does not hit $\tilde \Sigma_{sl}$ \\
BF$_3$ & FI & + & + & - & - & \\
BF$_4$ & FI & - & + & - & - & \\
BF$_5$ & FO & - & + & - & + & \\
\hline
\vspace{0.05cm}
\end{tabularx}
\caption{Classification for the singularly perturbed BEBs generated by the local normal form \eqref{eq:normal_form}, given in terms of a PWS classification for the PWS local normal form \eqref{eq:PWS_normal_form} obtained in the singular limit $\epsilon \to 0$. The classification is equivalent to \SJJ{the} PWS classification in \cite[Table 1]{Hogan2016}. Here $\pm$ denotes the sign of the corresponding quantity.}
\label{tab:classification}
\end{table}

\section{Main results}
\label{sec:results}

In this section we present our main results. We begin in Section \ref{ssec:results_blow-up} with an outline of the sequence of blow-up transformations necessary to resolve all degeneracy associated with singularly perturbed BEB in system \eqref{eq:normal_form}. This allows for the identification of a desingularized system governing the unfolding of the singularity. In Section \ref{ssec:results_bifurcatons}, we present the unfolding for all 12 singularly perturbed BEBs. In Section \ref{ssec:results_separatrices} we present results on the asymptotics of \SJ{a homoclinic double-separatrix which separates singularly perturbed BF$_{1,2}$ bifurcations. The BS$_{1,2}$ boundary is also discussed.} Finally in Section \ref{ssec:results_bn3}, we present results on an observed `explosion' in the case of singularly perturbed BN$_3$ bifurcations.

\subsection{Resolution via blow-up}
\label{ssec:results_blow-up}

\SJ{We} describe the blow-up analysis used to resolve degeneracy in system \eqref{eq:normal_form} due to either (i) the loss of smoothness along $\Sigma$, or (ii) the loss of hyperbolicity at fixed points. The sequence of blow-up transformations is the same as in \cite{Jelbart2020d}, \KUKK{so we restrict ourselves here to an overview.}

\

System \eqref{eq:normal_form} loses smoothness along $\Sigma$ in the singular limit $\epsilon \to 0$. To describe this, we follow \cite{Kristiansen2019c,Kristiansen2019} and others and consider extended system
\[
\left\{ (x',y') = \epsilon X(x,y,\mu,\epsilon), \epsilon' = 0 \right\} ,
\]
with respect to a fast time, 
recall \eqref{eq:normal_form}. For this system $\Sigma\times \{0\}$ is a set of equilibria with a loss of smoothness. We gain smoothness via a homogeneous cylindrical blow-up transformation of the form
\begin{equation}
\label{eq:bu_cyl}
r \geq 0, \ \left(\bar y, \bar \epsilon \right) \in S^1 \mapsto
\begin{cases}
y = r \bar y , \\
\epsilon = r \bar \epsilon ,
\end{cases}
\end{equation}
which replaces $\Sigma \times \{0\}$ by the cylinder $\{r=0\} \times \mathbb R \times S^1$, see Figure \ref{fig:bu1}. The subspace $\{r=0\}$ corresponding to the blow-up cylinder is invariant. \KUKK{After a suitable desingularization amounting to division by $\bar \epsilon$} 
, the dynamics within $\{r=0\}$ are governed by a slow-fast system with a normally hyperbolic and attracting critical manifold, denoted $S$ in Figure \ref{fig:bu1}(b) \cite{Bonet2016,Buzzi2006,Kristiansen2019c,Kristiansen2019,Llibre2009,Llibre2007}. Moreover, there is a reduced flow on $S$ which is topologically conjugate to the sliding/Filippov flow induced by \eqref{eq:Filippov_VF}.

\begin{figure}[t!]
	\centering
	\subfigure[]{\includegraphics[width=.44\textwidth]{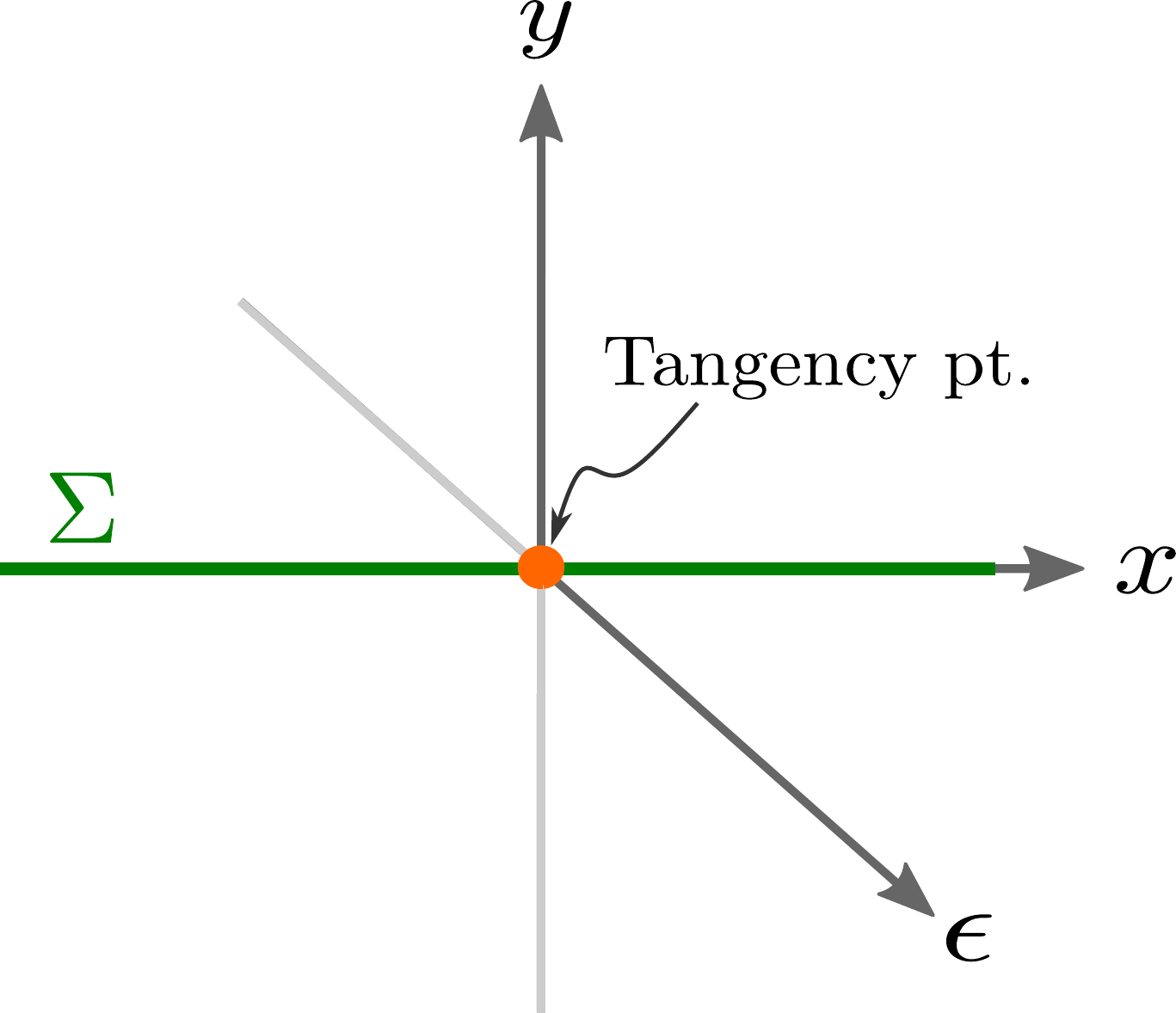}}
	\subfigure[]{\includegraphics[width=.54\textwidth]{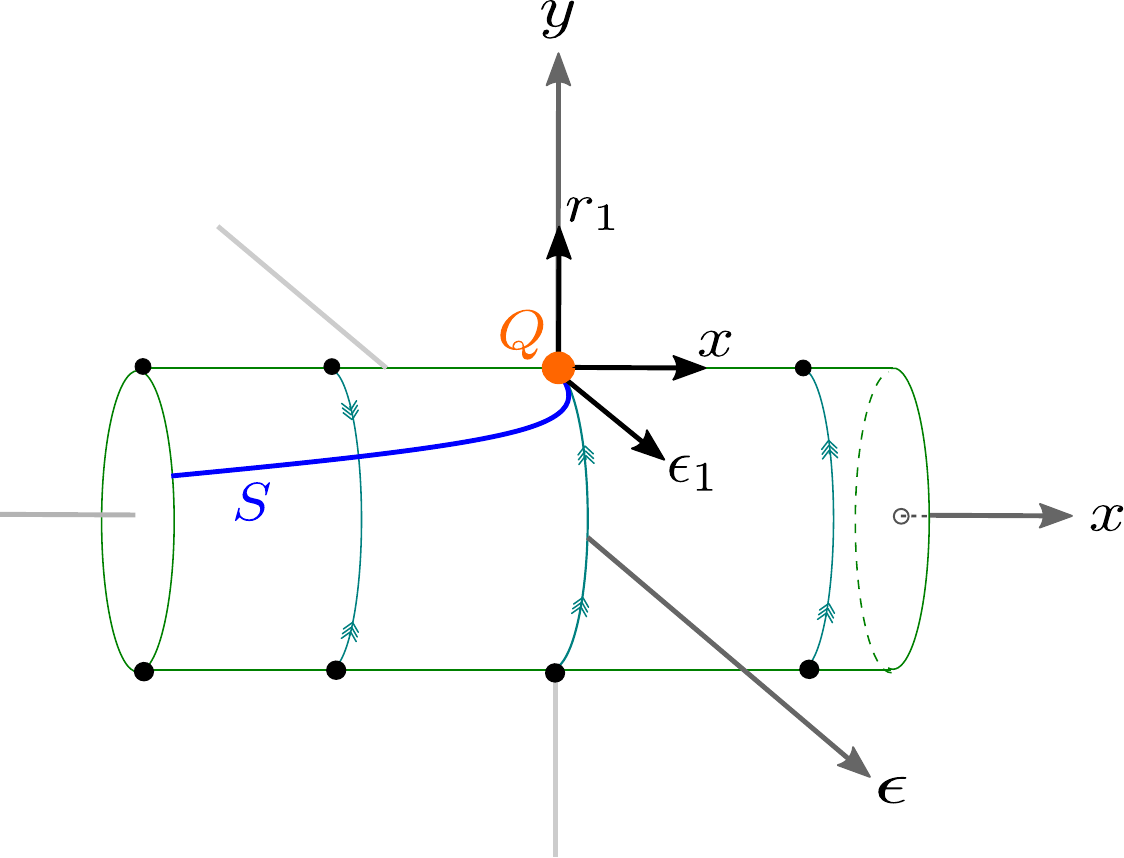}}
	\caption{Effect of the cylindrical blow-up \eqref{eq:bu_cyl}. (a) The switching manifold $\Sigma$ is shown in green, embedded in the extended $(x,y,\epsilon)-$space. The tangency point is shown in orange. (b) Dynamics and geometry following cylindrical blow-up of $\Sigma \times \{0\}$. The loss of smoothness along $\Sigma \times \{0\}$ has been resolved, but a degenerate point $Q$ (also in orange) stemming from the tangency point persists. An attracting critical manifold $S$ terminating at $Q$ is identified on the cylinder, and shown here in blue. The local projective coordinates $(x,r_1,\epsilon_1)$ defined in \eqref{eq:K1} and centered at $Q$ are also shown.}
	\label{fig:bu1}
\end{figure}

\

The critical manifold $S$ terminates tangentially to the fast flow at a degenerate point $Q \in \{r = \epsilon =0 \}$, which is also a point of tangency with the outer dynamics induced by the vector field $X^+$ within $\{\epsilon = 0\}$; \KUKKK{see again Figure \ref{fig:bu1}(b)}. 
Choosing local coordinates of the form
\begin{equation}
\label{eq:K1}
\bar y = 1 : \  y = r_1, \qquad \epsilon = r_1 \epsilon_1 ,
\end{equation}
with $x$ unchanged, this degeneracy is identified as a fully nonhyperbolic (\KUKKK{i.e. no eigenvalues with non-zero real part}) equilibrium \SJ{at $(x,r_1,\epsilon_1) = (0,0,0)$}.

The point $Q$ is degenerate for all $\mu \in \mathbb R$, however degeneracy stemming from the presence of the tangency is resolved via the weighted spherical blow-up
\begin{equation}
\label{eq:bu_Q}
\rho \geq 0, \ \left(\hat x, \hat r, \hat \epsilon \right) \in S^2 \mapsto
\begin{cases}
x = \rho^{k(1+k)} \hat x , \\
r_1 = \rho^{2k(1+k)} \hat r , \\
\epsilon_1 = \rho^{1+k} \hat \epsilon ,
\end{cases}
\end{equation}
where $k := k_+ \in \mathbb N_+$ is the decay exponent associated with the regularization function $\phi$, see equation \eqref{eq:reg_asymptotics}. After another desingularization (division by $\rho^{k(1+k)}$), nontrivial dynamics are identified within the invariant subspace $\{\rho = 0\}$ corresponding to the blow-up sphere $\{\rho = 0\} \times S^2$. The critical manifold $S$ now connects to a partially hyperbolic and \SJ{(partially)} attracting (\KUKKK{i.e. there is an eigenvalue with negative real part}) equilibrium $p_a$ contained within the intersection of the blow-up cylinder and blow-up sphere, see \KUKKK{Figure \ref{fig:bu2}(a)}. An attracting center manifold $\mathcal W \in \{\rho=0\}$ emanates from $p_a$, thereby `extending' $S$. Whether or not equilibria are also identified along the intersection of the blow-up sphere with $\{\epsilon = 0\}$ depends on whether the corresponding BEB is type BS, BN or BF, as well as \WM{on the sign of $\mu$}; see Figure \ref{fig:bu2}(b), (c) and (d) \SJJ{(additional equilibria arising in cases BN and BS are denoted $q_w$ and $q_o$ as in Figure \ref{fig:bu2}(c))}. 

\begin{figure}[t!]
	\centering
	\subfigure[]{\includegraphics[width=.49\textwidth]{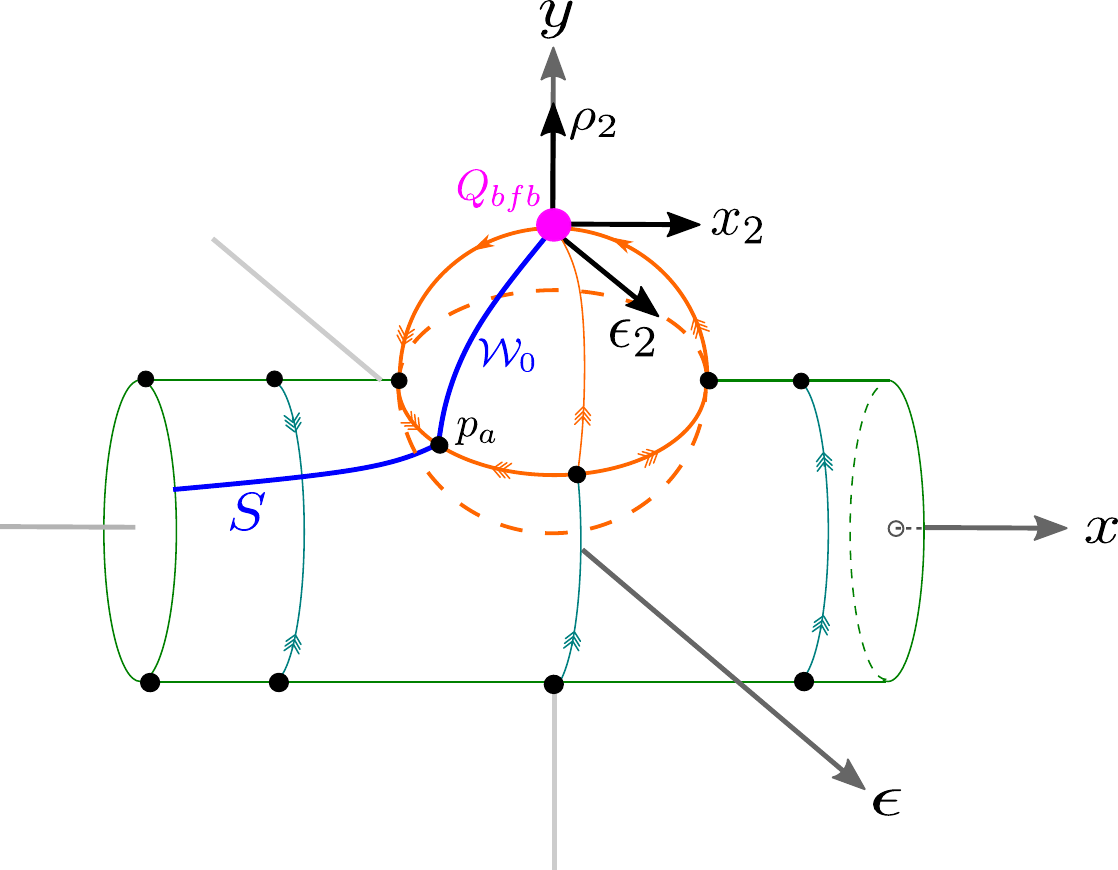}}
	\subfigure[]{\includegraphics[width=.49\textwidth]{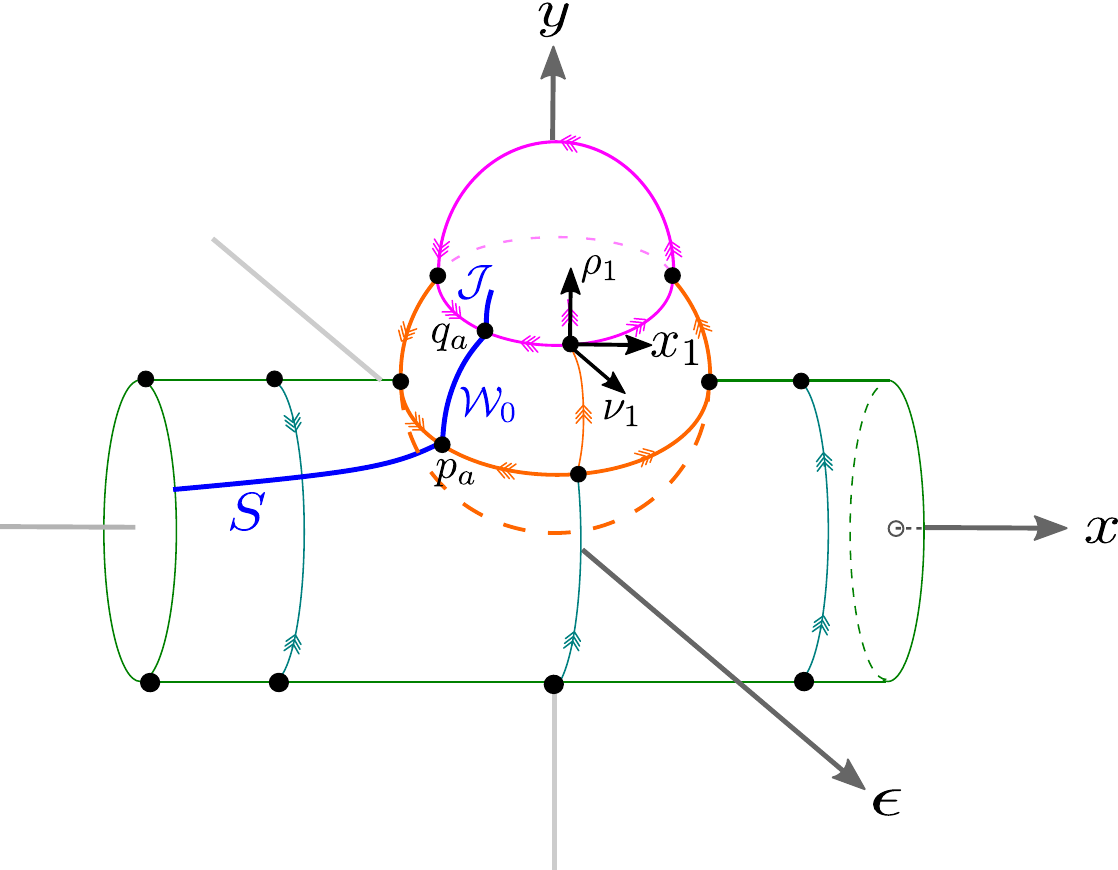}}
	
	\subfigure[]{\includegraphics[width=.49\textwidth]{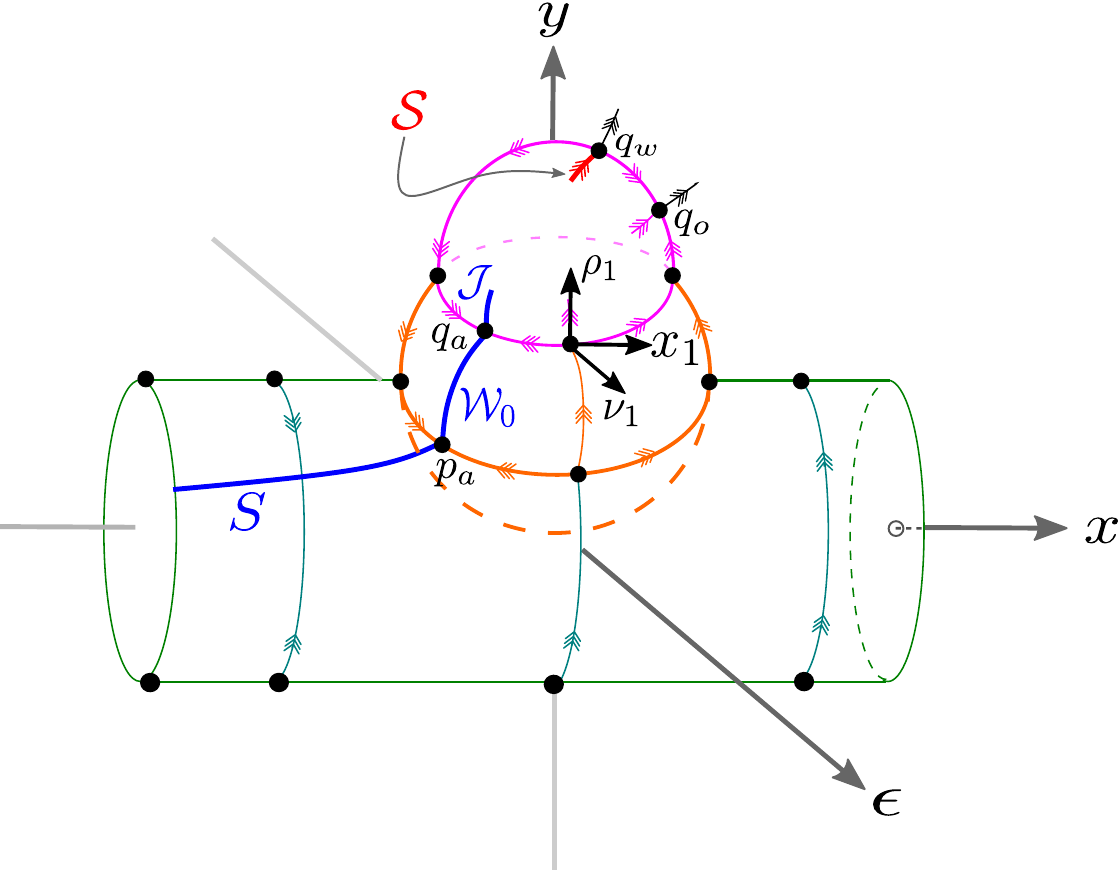}}
	\subfigure[]{\includegraphics[width=.49\textwidth]{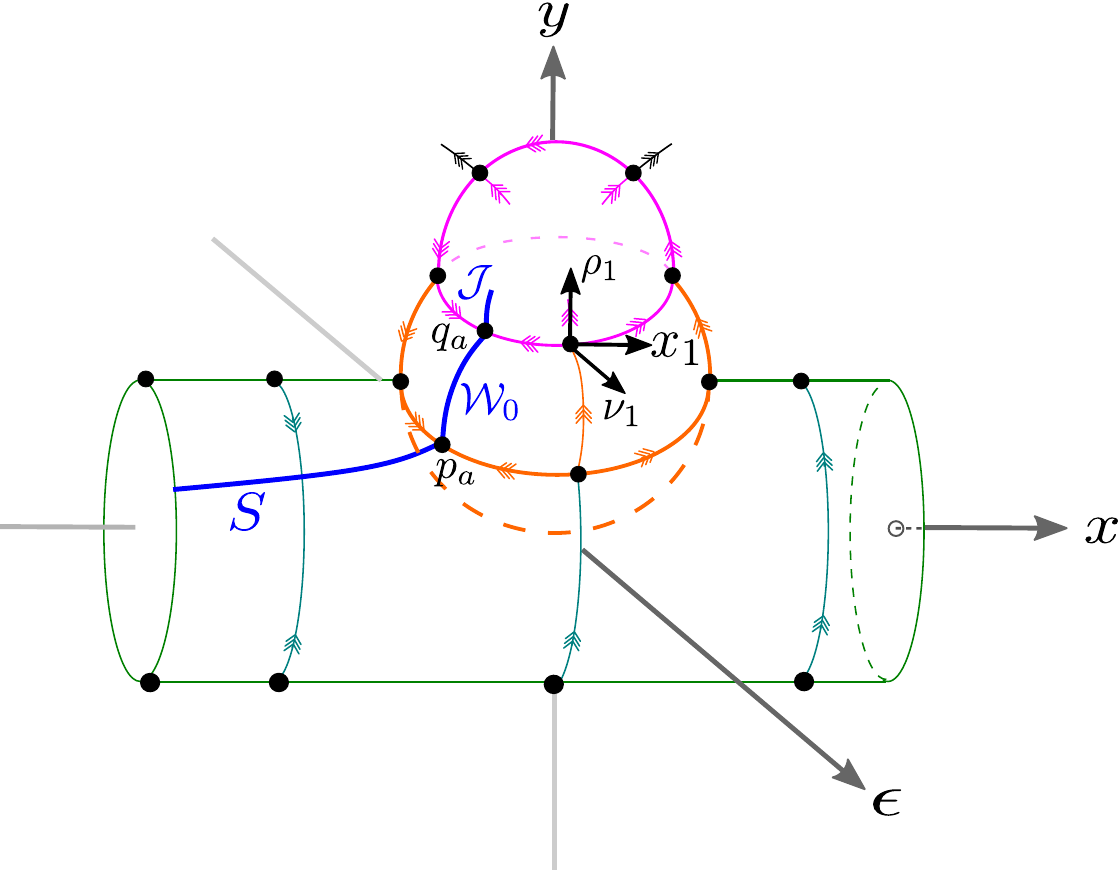}}
	\caption{(a) Dynamics and geometry after spherical blow-up of $Q$ via \eqref{eq:bu_Q}. The critical manifold $S$ in blue connects to the blow-up sphere at an attracting, partially hyperbolic point $p_a$, and an attracting center manifold $\mathcal W$, also in blue, emanates from $p_a$ over the blow-up sphere shown in orange. If $\mu \neq 0$, all degeneracy is resolved. For $\mu = 0$, the case shown here, $\mathcal W$ is a critical manifold $\mathcal W_0$ which connects to the degenerate point $Q_{bfb}$ (magenta), which corresponds to the BE singularity. Local coordinates $(x_2,\rho_2,\epsilon_2)$ centered at $Q_{bfb}$ are also shown. (b) Dynamics and geometry following spherical blow-up of $Q_{bfb}$ via \eqref{eq:bu_Qbfb} in case BF. By restricting to the invariant set defined by the scaling \eqref{eq:scaling}, the blow-up 3-sphere (magenta) can be projected into into $(\check x, \check \rho, \check \epsilon)-$space as described in the text, and plotted in 3D. Following blow-up, $\mathcal W_0$ connects to an attracting, partially hyperbolic point $q_a$. An attracting center manifold $\mathcal J$ contained within $\{\nu = 0\}$, also in blue, extends from $q_a$ onto the new blow-up sphere. Local coordinates $(x_1,\rho_1,\nu_1)$ defined via \eqref{eq:coord_change} used to describe the dynamics on the sphere are also shown. (c) resp.~(d) Dynamics and geometry after blow-up in cases BN resp.~BS. Here one identifies additional equilibria within $\{\nu = \check \epsilon = \check \mu = 0 \}$.} 
	\label{fig:bu2}
\end{figure}
\

It follows from previous work \cite{Jelbart2020d,Kristiansen2019c} that for each fixed $\mu \neq 0$, the blow-up transformations \eqref{eq:bu_cyl} and \eqref{eq:bu_Q} are sufficient to resolve all degeneracies in system \eqref{eq:normal_form}. For $\mu = 0$, an additional degeneracy persists due to the BE singularity. In this case, $\mathcal W$ becomes an attracting critical manifold $\mathcal W_0$, and connects to another degenerate point $Q_{bfb} \in \{\rho = \epsilon = 0\}$ at the top of the blow-up sphere \cite{Jelbart2020d}. This case is shown in Figure \ref{fig:bu2}(a). 
The point $Q_{bfb}$ is located at the origin in local coordinates $(x_2,\rho_2,\epsilon_2)$ defined by
\begin{equation}\label{eq:hatr1}
\hat r = 1 : \  x = \rho_2^{k(1+k)} x_2 , \qquad r_1 = \rho_2^{2k(1+k)} , \qquad \epsilon_1 = \rho_2^{1+k} \epsilon_2 ,
\end{equation}
for $\mu = 0$ only. Appending the trivial equation $\mu'=0$ to the system obtained in these coordinates, $Q_{bfb}$ is identified as a nonhyperbolic equilibrium within the extended $(x_2,\rho_2,\epsilon_2,\mu)-$space. Finally, degeneracy at $Q_{bfb}$ is resolved via the weighted spherical blow-up
\begin{equation}
\label{eq:bu_Qbfb}
\nu \geq 0, \ \left(\check x, \check \rho , \check \epsilon, \check \mu \right) \in S^3 \mapsto
\begin{cases}
x_2 = \nu^{k(1+k)} \check x , \\
\rho_2 = \nu \check \rho , \\
\epsilon_2 = \nu^{1+k} \check \epsilon  , \\
\mu = \nu^{2k(1+k)} \check \mu ,
\end{cases}
\end{equation}
which replaces $Q_{bfb}$ with the 3-sphere $\{\nu = 0\} \times S^3$. Following this spherical blow-up, and a desingularization amounting to division by \KUKKK{$\nu^{k(1+k)}$}, the critical manifold $\mathcal W_0$ terminates at a partially hyperbolic and \SJ{(partially)} attracting equilibrium $q_a$ contained within $\{\nu = \check \rho = \check \mu = 0\}$, see Figure \ref{fig:bu2}. An attracting center manifold $\mathcal J$ contained within $\{\nu = 0\}$, i.e.~on the new blow-up sphere, emanates from $q_a$, thereby extending $\mathcal W_0$. In the case that the BEB is of either BN or BS type, one also identifies equilibria 
on the top of the blow-up sphere within $\{\nu = \check \epsilon = \check \mu = 0 \}$, \SJJ{see Figures \ref{fig:bu2}(c) and (d).}

\

The sequence of blow-up transformations \eqref{eq:bu_cyl}, \eqref{eq:bu_Q} and \eqref{eq:bu_Qbfb} can be written in the following form upon composition:
\begin{align}\label{eq:comp_final}
\nu\ge 0,\ \left(\check x, \check \rho , \check \epsilon, \check \mu \right) \in S^3 \mapsto
\begin{cases}
x = \nu^{2k(1+k)} \check \rho^{k(1+k)} \check x , \\
y= \nu^{2k(1+k)}  \check \rho^{2k(1+k)} , \\
\epsilon = \nu^{2(1+k)^2} \check \rho^{\WM{(2k+1)}(1+k)} \check \epsilon  , \\
\mu = \nu^{2k(1+k)} \check \mu .
\end{cases}
\end{align}

{\begin{remark}
\SJJ{Note that the $\mu-$coordinate is not shown in Figure \ref{fig:bu2}(b), (c) or (d).} Due to the conservation of $\mu$ and the original small parameter $\epsilon$ \KUKK{it follows that}
\begin{align}
\hat \mu:=\frac{\mu}{\epsilon^{k/(1+k)}} = \check \mu \check \rho^{-{k(2k+1)}} \check \epsilon^{-k/(1+k)},\label{eq:scaling}
\end{align}
is \KUKK{also} a conserved quantity, \KUKK{even} for $\nu=0$. This conserved quantity induces a foliation of the blow-up 3-sphere by lower-dimensional 2-spheres parameterized by $\hat \mu \in \mathbb R$, thereby permitting a 3-dimensional representation as in Figure \ref{fig:bu2}. In the following we will, when it is convenient to do so, view $\hat \mu$ as our bifurcation parameter on the sphere. 
\end{remark}
}

Applying \SJJ{\eqref{eq:comp_final}} to the doubly extended system
\begin{equation}
\label{eq:doubly_extended}
\{(x',y')=\epsilon X(x,y,\mu,\epsilon),\epsilon'=0,\mu'=0\} ,
\end{equation}
and performing a desingularization which corresponds to division of the right-hand-side by $\nu^{2(1+k)^2} \check \rho^{(1+k)^2} \check \epsilon$, resolves all degeneracy in system \eqref{eq:normal_form}. This enables a description of the unfolding of the singularly perturbed BEBs for all $0<\epsilon\ll 1$. 

\begin{lemma}
	\label{lem:desing_prob}
	
	A desingularized system governing the singular limit dynamics in the scaling regime \SJ{defined by $\mu = \hat \mu \epsilon^{k/(1+k)}$} 
	can be obtained from the doubly extended system \eqref{eq:doubly_extended} 
	by an application of the coordinate transformation
	\begin{equation}
	\label{eq:coord_change}
	\left(x_1, \nu_1, \rho_1, \hat \mu \right) \in \mathbb R \times \mathbb R_+^2 \times \mathbb R \mapsto
	\begin{cases}
	x = \nu_1^{2k(1+k)} \rho_1^{k(1+k)} x_1 , \\
	y = \nu_1^{2k(1+k)} \rho_1^{2k(1+k)} , \\
	\epsilon = \nu_1^{2(1+k)^2} \rho_1^{(1+k)(1+2k)} , \\
	\mu = \hat \mu \nu_1^{2k(1+k)} \rho_1^{k(1+2k)} ,
	\end{cases}
	\end{equation}
	followed by the desingularization
	\begin{equation}
	\label{eq:time_change}
	d\tilde t = \nu_1^{2(1+k)^2} \rho_1^{(1+k)^2} dt ,
	\end{equation}
	and finally, restriction to the invariant subspace $\{\nu_1 = 0\}$ corresponding to $\epsilon = 0$. 
	The resulting system is
	\begin{equation}
	\label{eq:desing_prob}
	\begin{split}
	x_1' &= \rho_1^{k(1+k)} \left((\tau - \gamma) \beta + \hat \mu \rho_1^{k^2} + \tau x_1 - \delta \rho_1^{k(1+k)} \right) + k x_1 \left(\beta + x_1 \right) , \\
	\rho_1' &= \frac{1}{k} \rho_1 \left(\beta + x_1 \right) ,
	\end{split}
	\end{equation}
	\SJJ{where we write $\beta := \beta_+ = \phi_+(0)$.} Moreover, system \eqref{eq:desing_prob} is topologically equivalent to 
	\begin{equation}
	\label{eq:desing_prob2}
	\begin{split}
	X' &=\left(\hat \mu+\tau X -\delta Y \right) Y^k -  (\gamma - \tau)\beta ,\\
	Y'&= X \hat Y^k+\beta ,
	\end{split}
	\end{equation}	
	on $\{Y>0\}$ via the diffeomorphism defined by
	\begin{align}\label{eq:XYhere}
	(X,Y)\mapsto \begin{cases}x_1 &= Y^{k} X,\\
	\rho_1 &= Y^{1/k}.
	\end{cases}
	\end{align}
	
\end{lemma}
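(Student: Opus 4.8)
The plan is to prove the lemma in two stages: first derive \eqref{eq:desing_prob} by a direct chart computation, and then establish the topological equivalence with \eqref{eq:desing_prob2}.

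For the first stage I would begin by recognising the transformation \eqref{eq:coord_change} as the chart $\check\epsilon = 1$ of the composed blow-up \eqref{eq:comp_final}, with the conserved quantity $\hat\mu$ from \eqref{eq:scaling} playing the role of $\check\mu$; in particular $\nu_1 = \nu$, $\rho_1 = \check\rho$, $x_1 = \check x$, and $\{\nu_1 = 0\}$ is precisely $\epsilon = 0$. Since the doubly extended system \eqref{eq:doubly_extended} satisfies $\epsilon' = \mu' = 0$, both $\epsilon$ and $\hat\mu = \mu\,\epsilon^{-k/(1+k)}$ are conserved along orbits. Writing $\epsilon$ in the new coordinates and differentiating the constraint that it is constant yields the linear relation $\dot\nu_1/\nu_1 = -\tfrac{1+2k}{2(1+k)}\,\dot\rho_1/\rho_1$, which I would use throughout to eliminate $\dot\nu_1$.

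The crux of the first stage is the expansion of $\phi(y\epsilon^{-1})$. I would compute $y\epsilon^{-1} = \nu_1^{-2(1+k)}\rho_1^{-(1+k)}$, which is large and positive near the blow-up locus, and invoke Assumption \ref{assumption:3} to obtain $\phi(y\epsilon^{-1}) = 1 - \nu_1^{2k(1+k)}\rho_1^{k(1+k)}\,\phi_+\!\big(\nu_1^{2(1+k)}\rho_1^{(1+k)}\big)$ with $\phi_+(0) = \beta$. Substituting the scalings of $x$, $y$ and $\mu$ into \eqref{eq:normal_form}, the $\mathcal O(1)$ parts $\tau-\gamma$ and $\phi\cdot(\gamma-\tau)$ cancel, leaving $X_1 = \nu_1^{2k(1+k)}\rho_1^{k(1+k)}\big[(\tau-\gamma)\beta + \hat\mu\rho_1^{k^2} + \tau x_1 - \delta\rho_1^{k(1+k)}\big] + (\text{h.o.t.})$ and $X_2 = \nu_1^{2k(1+k)}\rho_1^{k(1+k)}(\beta + x_1) + (\text{h.o.t.})$, where every omitted term (from $\theta_1,\theta_2$ and from $\phi_+(\cdot)-\beta$) carries a strictly positive power of $\nu_1$. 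Differentiating $x = \nu_1^{2k(1+k)}\rho_1^{k(1+k)}x_1$ and $y = \nu_1^{2k(1+k)}\rho_1^{2k(1+k)}$, substituting $\dot x = \epsilon X_1$, $\dot y = \epsilon X_2$, eliminating $\dot\nu_1$ via the relation above, applying the time rescaling \eqref{eq:time_change}, and finally setting $\nu_1 = 0$ then produces \eqref{eq:desing_prob}; the quadratic term $k x_1(\beta + x_1)$ in the $x_1'$ equation arises from the blow-up weights, i.e.\ from differentiating the prefactor $\nu_1^{2k(1+k)}\rho_1^{k(1+k)}$ in the expression for $x$. The one point that genuinely needs checking is that the desingularisation factor in \eqref{eq:time_change} is chosen exactly so that all negative powers of $\rho_1$ cancel and the right-hand side is polynomial, and that the restriction $\nu_1 = 0$ is well defined because the h.o.t.\ all vanish there. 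I expect this exponent bookkeeping, together with the correct handling of the $\phi$-asymptotics, to be the most delicate part.

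For the second stage I would verify that \eqref{eq:XYhere} is a diffeomorphism of $\{Y>0\}$ onto $\{\rho_1>0\}$, with smooth inverse $Y = \rho_1^k$, $X = x_1\rho_1^{-k^2}$, and push \eqref{eq:desing_prob} forward. Using $\rho_1' = \tfrac1k\rho_1(\beta+x_1)$ gives $Y' = \rho_1^k(\beta+x_1) = Y(\beta + XY^k)$, while $X' = \rho_1^{-k^2}\big(x_1' - k x_1(\beta+x_1)\big)$ cancels the nonlinear term in \eqref{eq:desing_prob} and, after substituting $x_1 = Y^kX$, $\rho_1^{k^2}=Y^k$, $\rho_1^{k(1+k)}=Y^{1+k}$, yields $X' = Y\big[(\hat\mu+\tau X-\delta Y)Y^k - (\gamma-\tau)\beta\big]$. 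Both components are therefore $Y$ times the right-hand side of \eqref{eq:desing_prob2} (so the $\hat Y^k$ written there is to be read as $Y^k$). Since $Y>0$, this common factor is a strictly positive time reparametrisation, which preserves orbits and their orientation; hence \eqref{eq:desing_prob} and \eqref{eq:desing_prob2} are topologically equivalent on $\{Y>0\}$, completing the argument.
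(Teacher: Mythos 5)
Your proposal is correct and follows essentially the same route as the paper: the paper also identifies \eqref{eq:coord_change} as the chart $\check\epsilon=1$ of the composed blow-up \eqref{eq:comp_final}, eliminates $\mu_1$ via the conserved quantity $\hat\mu$, and defers the explicit pushforward computation to Lemma \ref{lem:desing_ext} in Appendix \ref{app:bu_eqns}, where the same expansion of $\phi(y\epsilon^{-1})$ via Assumption \ref{assumption:3} yields \eqref{eq:desing_prob} on $\{\nu_1=0\}$; the only organizational difference is that the paper composes the three blow-up maps and desingularizes sequentially, whereas you substitute the full composition at once and use conservation of $\epsilon$ to eliminate $\dot\nu_1$. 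Your explicit verification of the equivalence with \eqref{eq:desing_prob2} (including the observation that the common factor $Y>0$ is a positive time reparametrization, and that $\hat Y^k$ is a typo for $Y^k$) matches the calculation the paper outsources to \cite[Lemma 3.2 and Remark 3.4]{Jelbart2020d}.
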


\begin{proof}
	The transformation \eqref{eq:coord_change} is simply obtained from \eqref{eq:comp_final} by working in the chart $\check \epsilon=1$ with chart-specific coordinates $(x_1,\nu_1,\rho_1,\mu_1)$ defined by
\begin{align*}
	x &= \nu_1^{2k(1+k)} \rho_1^{k(1+k)} x_1 , \\
	y &= \nu_1^{2k(1+k)} \rho_1^{2k(1+k)} , \\
	\epsilon &= \nu_1^{2(1+k)^2} \rho_1^{(1+k)(1+2k)} , \\
	\mu &= \nu_1^{2k(1+k)} \mu_1.
	\end{align*}
	In this chart, $\hat \mu=\mu_1 \rho_1^{-k\WM{(2k+1)}}$, recall \eqref{eq:scaling}, which gives the desired result upon using this expression to eliminate $\mu_1$. From this, we obtain \eqref{eq:desing_prob2} by a calculation, see \SJ{Lemma} \ref{lem:desing_ext} below for further details as well as \cite[Lemma 3.2 and Remark 3.4]{Jelbart2020d}.
\end{proof}

Both systems \eqref{eq:desing_prob} and \eqref{eq:desing_prob2} are useful for describing the unfolding of singularly perturbed BEB in system \eqref{eq:normal_form}. System \eqref{eq:desing_prob} arises from a central projection of the final blow-up transformation, and is preferred for purposes of global computations within the blown-up space. System \eqref{eq:desing_prob2} is derived by a direct parameter rescaling, \KUK{and although it is preferred for local computations pertaining to e.g.~bifurcations, it is less suited to global analyses.}\footnote{\SJ{See Remark \ref{rem:here} for more details.}}

\KUK{Notice however, that \eqref{eq:desing_prob2} can also be obtained more directly by composing \eqref{eq:XYhere} with \eqref{eq:coord_change}. This gives
	\begin{align}
	(X,Y,\epsilon,\hat \mu) \mapsto \begin{cases}
	x =\epsilon^{k/(1+k)} X\\
	y =\epsilon^{k/(1+k)} Y,\\
	\mu =\epsilon^{k/(1+k)}\hat \mu,
	\end{cases}\label{eq:xyXY}
	\end{align}
	after eliminating $\nu_1$. Inserting this into \eqref{eq:normal_form} gives \eqref{eq:desing_prob2} for $\epsilon\rightarrow 0$ upon desingularization.  }

It is also possible to scale $x$ and $y$ by $\mu$ for $\mu>0$ instead of $\epsilon$; in fact, this is more well-suited for $\hat \mu\rightarrow \infty$. Therefore if we define
	\begin{align*}
	\hat \epsilon = \hat \mu^{-(1+k)/k}
	\end{align*}
	then
	\begin{align}
	(\widehat X,\widehat Y,\mu,\hat \epsilon) \mapsto \begin{cases}
	x = \mu \widehat X\\
	y =\mu \widehat Y,\\
	\epsilon =\mu^{(1+k)/k} \hat \epsilon,
	\end{cases}\label{eq:xyXYpm}
	\end{align}
	transforms \eqref{eq:normal_form} into following system
	\begin{equation}\label{eq:xyXYpmeqs}
	\begin{aligned}
	{\widehat X}' &=(1+\tau {\widehat X} -\delta {\widehat Y}) {\widehat Y}^k-(\gamma-\tau)\beta \hat \epsilon^k ,\\
	{\widehat Y}'  &= {\widehat X} {\widehat Y}^k +\beta \hat \epsilon^k,
	\end{aligned}
	\end{equation}
	for $\mu\rightarrow 0$ upon desingularization.
	\SJJ{System} \eqref{eq:xyXYpmeqs} \SJJ{is} smoothly topologically equivalent \SJJ{to} \eqref{eq:desing_prob2} on $\hat \mu> 0$ through the transformation
	\begin{align*}
	(X,Y,\hat \mu)\mapsto \begin{cases} {\widehat X}  =\hat \mu^{-1} X,\\
	{\widehat Y} =\hat \mu^{-1} Y.
	\end{cases}
	\end{align*}
The limit $\hat \mu\rightarrow -\infty$ can be studied via an analogous scaling by $-\mu$ for $\mu<0$, but we will not need this in our analysis.


\begin{remark}
	\label{rem:quantitative}
	In this work we focus on the qualitative dynamics near a nondegenerate BE bifurcation. For general systems \eqref{eq:general} with a BE bifurcation at $(z,\alpha) = (z_{bf}, \alpha_{bf})$, however, \KUK{Lemma \ref{lem:desing_prob} offers a direct route to obtain quantitative information about the dynamics without the need for bringing the system into normal form, by first shifting $(\tilde z, \tilde \alpha) = (z - z_{bf}, \alpha - \alpha_{bf})$, and then applying the coordinate transformation \eqref{eq:coord_change} and desingularization by \eqref{eq:time_change} with $\tilde z=(x,y)$.}
\end{remark}

\subsection{Unfolding all 12 singularly perturbed BEBs}
\label{ssec:results_bifurcatons}

The limiting bifurcation structure can be derived for each case using either of the systems \eqref{eq:desing_prob} or \eqref{eq:desing_prob2}. We may consider system \eqref{eq:desing_prob2} for simplicity, which by \cite[Lemma 3.5]{Jelbart2020d} has either 0, 1 or 2 equilibria. The corresponding bifurcation diagrams with $\epsilon\ll1$ are obtained after lifting results for $\epsilon = 0$. 



\begin{thm}
	\label{thm:bifs}
	Consider system \eqref{eq:normal_form}. There exists an $\epsilon_0>0$ such that for all $\epsilon \in (0,\epsilon_0)$, the following assertions hold:
	\begin{enumerate}
		\item[(i)] Fix $\gamma/\delta>0$. Saddle-node bifurcation occurs for $\mu = \mu_{sn}(\gamma,\epsilon)$, where
		\begin{equation}
		\label{eq:1sn}
		\mu_{sn}(\gamma,\epsilon) = \frac{(1+k) \delta} k \left(\frac{k \beta \gamma}{\delta} \right)^{1/(1+k)} \epsilon^{k/(1+k)} + o\left(\epsilon^{k/(1+k)}\right) . 
		\end{equation}
		\item[(ii)] Fix $\tau < 0$ and $\gamma < \delta/\tau$. Supercritical Andronov-Hopf bifurcation occurs for $\mu = \mu_{ah}(\gamma,\epsilon)$, where
		\begin{equation}
		\label{eq:1ah}
		\mu_{ah}(\gamma,\epsilon) = \frac{k \delta + \tau \gamma}{k} \left(\frac {k\beta} \tau \right)^{1/(1+k)} \epsilon^{k/(1+k)} + o\left(\epsilon^{k/(1+k)}\right) . 
		\end{equation}
		\item[(iii)] Fix $\tau > 0$. Parameter-space surfaces defining saddle-node and Andronov-Hopf bifurcations in $(\gamma,\mu,\epsilon)-$space extend to intersect in a curve of supercritical Bogdanov-Takens bifurcations given by
		\begin{equation}
		\label{eq:1bt}
		(\mu_{bt},\gamma_{bt})(\epsilon) = \left(\frac{(1+k) \delta}{k} \left(\frac{k \beta}{\tau} \right)^{1/(1+k)} \epsilon^{k/(1+k)} + o\left(\epsilon^{k/(1+k)}\right) , \frac \delta \tau \right) . 
		\end{equation}
		\item[(iv)] Fix $\tau > 0$ and $0< \gamma < \delta / \tau$. Homoclinic-to-saddle bifurcation occurs along $\mu = \mu_{hom}(\gamma,\epsilon)$, which is given locally near $(\mu_{bt},\gamma_{bt})(\epsilon)$ by 
		\[
		\begin{split}
		\mu_{hom}(\gamma,\epsilon) &= 
		\left[\left(\frac{k \beta}{\tau} \right)^{1/(1+k)} \left(\frac{(1+k) \delta} k + \frac{\tau}{k} \left(\gamma - \frac{\delta}{\tau}\right) \right) + \mathcal O\left( \left(\gamma - \frac{\delta}{\tau}\right)^2 \right) \right]
		\epsilon^{k/(1+k)} \\
		&+ o\left(\epsilon^{k/(1+k)}\right) .
		\end{split}
		\]
		There is no homoclinic bifurcation for $\gamma < 0$.
		\item[(v)] Viewed within the $(\gamma,\mu)-$plane, the curves $\mu_{sn}(\gamma,\epsilon)$, $\mu_{ah}(\gamma,\epsilon)$ and $ \mu_{hom}(\gamma,\epsilon)$ are all quadratically tangent at $(\gamma_{bt},\mu_{bt})(\epsilon)$ and satisfy
		\[
		0 < \mu_{sn}(\gamma,\epsilon) < \mu_{ah}(\gamma,\epsilon) < \mu_{hom}(\gamma,\epsilon) ,
		\]
		where all three coexist.
	\end{enumerate}
\end{thm}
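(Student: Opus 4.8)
The plan is to prove all five assertions by first reducing to the $\epsilon=0$ desingularized planar system \eqref{eq:desing_prob2} from Lemma \ref{lem:desing_prob}, which governs the singular-limit dynamics in the scaling regime $\mu = \hat\mu\,\epsilon^{k/(1+k)}$, and only then lifting the resulting bifurcation structure back to $0<\epsilon\ll1$. Writing the equilibrium conditions of \eqref{eq:desing_prob2} as $XY^k=-\beta$ (so that $X=-\beta Y^{-k}<0$) together with $(\hat\mu+\tau X-\delta Y)Y^k=(\gamma-\tau)\beta$, I would eliminate $X$ to obtain the single scalar equation $\delta Y^{k+1}-\hat\mu Y^k+\gamma\beta=0$ on $\{Y>0\}$, which recovers the count of $0$, $1$ or $2$ equilibria from \cite[Lemma 3.5]{Jelbart2020d}. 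A direct computation of the Jacobian of \eqref{eq:desing_prob2} at an equilibrium then gives $\tr = \tau Y^k-k\beta Y^{-1}$ and $\det = Y^{k-1}(\delta Y^{k+1}-k\beta\gamma)$; these two expressions are the backbone of the whole argument.

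For (i), a saddle-node corresponds to a double root of the scalar equation, equivalently $\det=0$ with $\tr\neq0$; this forces $Y_{sn}^{k+1}=k\beta\gamma/\delta$, which has a positive root precisely when $\gamma/\delta>0$, and substitution yields $\hat\mu_{sn}=(1+k)\beta\gamma\,Y_{sn}^{-k}$, which I expect to collapse to the coefficient in \eqref{eq:1sn}. For (ii), Andronov–Hopf is the locus $\tr=0$, $\det>0$, i.e.\ $\tau Y_{ah}^{k+1}=k\beta$, producing the value $\hat\mu_{ah}$ in \eqref{eq:1ah}; here I would additionally verify transversal eigenvalue crossing ($\partial_{\hat\mu}\tr\neq0$) and supercriticality through the sign of the first Lyapunov coefficient obtained by a center-manifold/normal-form reduction. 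For (iii), the Bogdanov–Takens point is the simultaneous solution $\tr=\det=0$; equating the two resulting expressions for $Y^{k+1}$ forces $\gamma_{bt}=\delta/\tau$ and pins $\hat\mu_{bt}$ as in \eqref{eq:1bt}, after which I would check the two standard nondegeneracy conditions (nonvanishing of the quadratic coefficients of the planar normal form) to conclude a nondegenerate, supercritical BT.

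Assertions (iv) and (v) I would read off from the versal unfolding of this BT point. The BT normal form produces, emanating from $(\gamma_{bt},\hat\mu_{bt})$, a fold curve, a supercritical Hopf curve and a homoclinic curve, all mutually tangent at the BT point; translating the normal-form expansions back through \eqref{eq:coord_change}–\eqref{eq:XYhere} gives the local expansion of $\mu_{hom}$ in (iv) and the quadratic tangency with the ordering $0<\mu_{sn}<\mu_{ah}<\mu_{hom}$ in (v), the strict inequalities following from the signs of the second-order coefficients in the unfolding. The nonexistence of a homoclinic for $\gamma<0$ I would argue globally, using the boundary behaviour of \eqref{eq:desing_prob} in the chart (the additional sphere equilibria $q_w,q_o$) and a Dulac/phase-plane argument to show that for $\gamma<0$ the saddle anchoring a loop either disappears or its invariant manifolds cannot reconnect. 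Finally, to transfer everything to $0<\epsilon\ll1$ I would invoke the blow-up picture of Section \ref{ssec:results_blow-up}: the equilibria and their local bifurcations sit on the normally hyperbolic attracting manifolds $\mathcal W_0$ and $\mathcal J$, so Fenichel theory and regular perturbation give persistence of the codimension-$1$ and codimension-$2$ bifurcations with $o(\epsilon^{k/(1+k)})$ corrections.

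The main obstacle, I expect, is concentrated in (iv) and is twofold: first, establishing the homoclinic orbit rigorously beyond a small neighbourhood of the BT point, that is, controlling the global return of the saddle's stable and unstable manifolds across the entire range $0<\gamma<\delta/\tau$ and proving the nonexistence statement for $\gamma<0$; and second, showing that this global connection survives the blow-up as a genuine homoclinic for $\epsilon>0$, where exponentially small splitting of the invariant manifolds must be controlled. By comparison, the Lyapunov- and BT-coefficient computations underlying the supercriticality claims in (ii)–(iii) are routine but tedious, and are secondary to this global/persistence difficulty.
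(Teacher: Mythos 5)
Your proposal follows essentially the same route as the paper's proof: reduce to the desingularized planar system, locate equilibria via the scalar equation $\delta Y^{k+1}-\hat\mu Y^k+\gamma\beta=0$, read off the SN/AH/BT loci from the trace and determinant, obtain the homoclinic branch from the versal unfolding of the Bogdanov--Takens point, and lift to $0<\epsilon\ll1$ by regular perturbation in the blow-up parameter. (The paper works with \eqref{eq:desing_prob} in the $(x_1,\rho_1)$ chart rather than \eqref{eq:desing_prob2}, but these are topologically equivalent and your trace/determinant formulas agree with the paper's up to the positive time rescaling.)

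Two remarks. First, you substantially overestimate the difficulty of (iv): the theorem only asserts the homoclinic curve \emph{locally near} $(\mu_{bt},\gamma_{bt})(\epsilon)$, so no global continuation of the saddle's invariant manifolds over $0<\gamma<\delta/\tau$ is required, and no exponentially small splitting enters --- the homoclinic branch is part of the BT versal unfolding, which persists under the regular perturbation in $\nu_1$ obtained by extending the desingularized system off $\{\nu_1=0\}$ and applying the implicit function theorem before blowing down. Second, your argument for the nonexistence of homoclinics when $\gamma<0$ (an unspecified Dulac/phase-plane argument involving $q_w,q_o$) is the one genuinely underspecified step; the paper's argument is a one-line index count: for $\gamma<0$ there is a single equilibrium in $\{\rho_1>0\}$, the remaining equilibria lie on the invariant line $\{\rho_1=0\}$ which no loop can cross, and hence a homoclinic loop would enclose no equilibrium, which is impossible. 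You should replace your sketch with that argument. (You may also note that the hypothesis $\tau<0$ in (ii) is inconsistent with the requirement $Y_{ah}^{k+1}=k\beta/\tau>0$ that your own computation produces; the proof in the paper in fact uses $\tau>0$ there.)
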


\begin{figure}[t!]
	\centering
	\subfigure[]{\includegraphics[width=.48\textwidth]{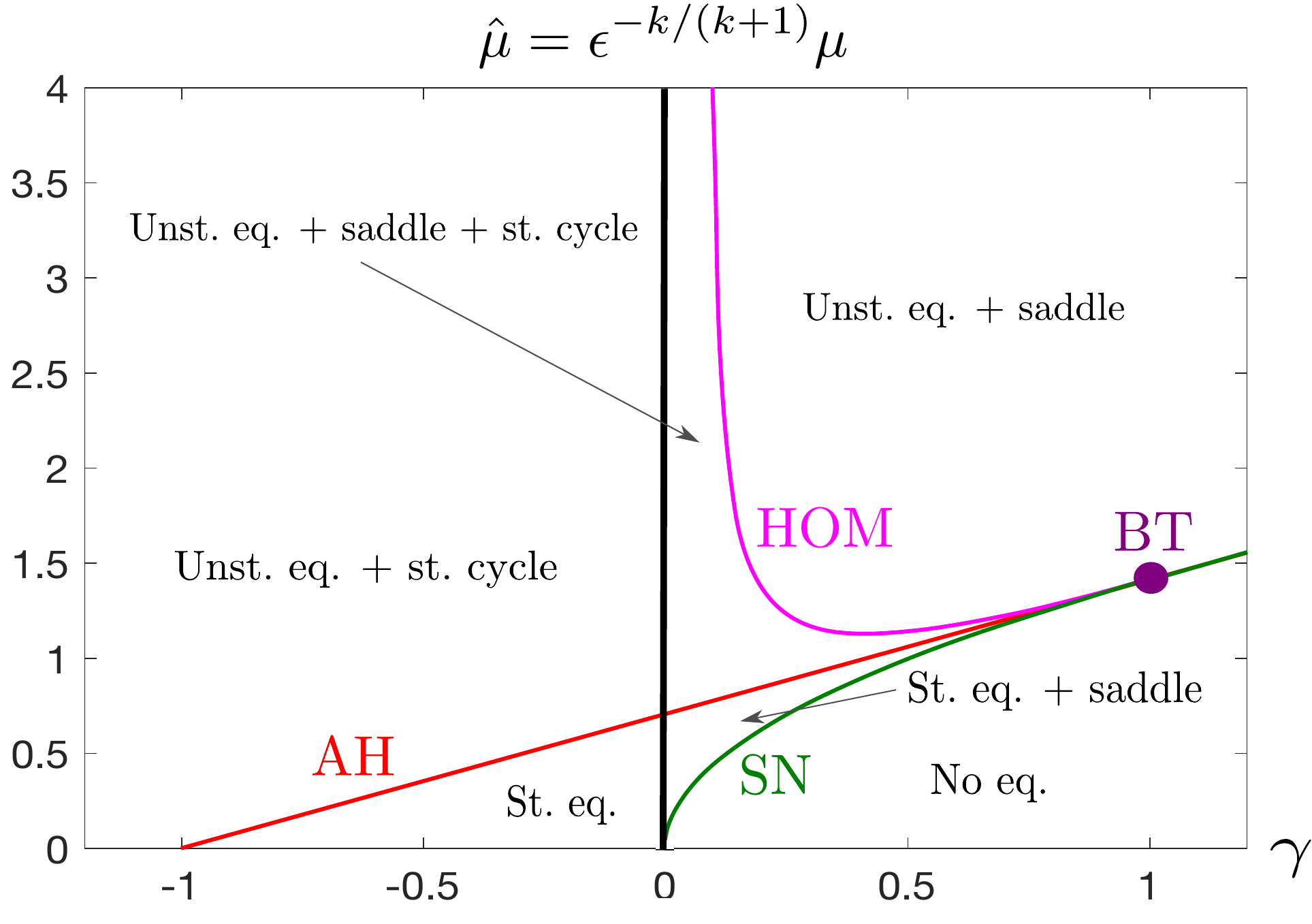}}
	\subfigure[]{\includegraphics[width=.48\textwidth]{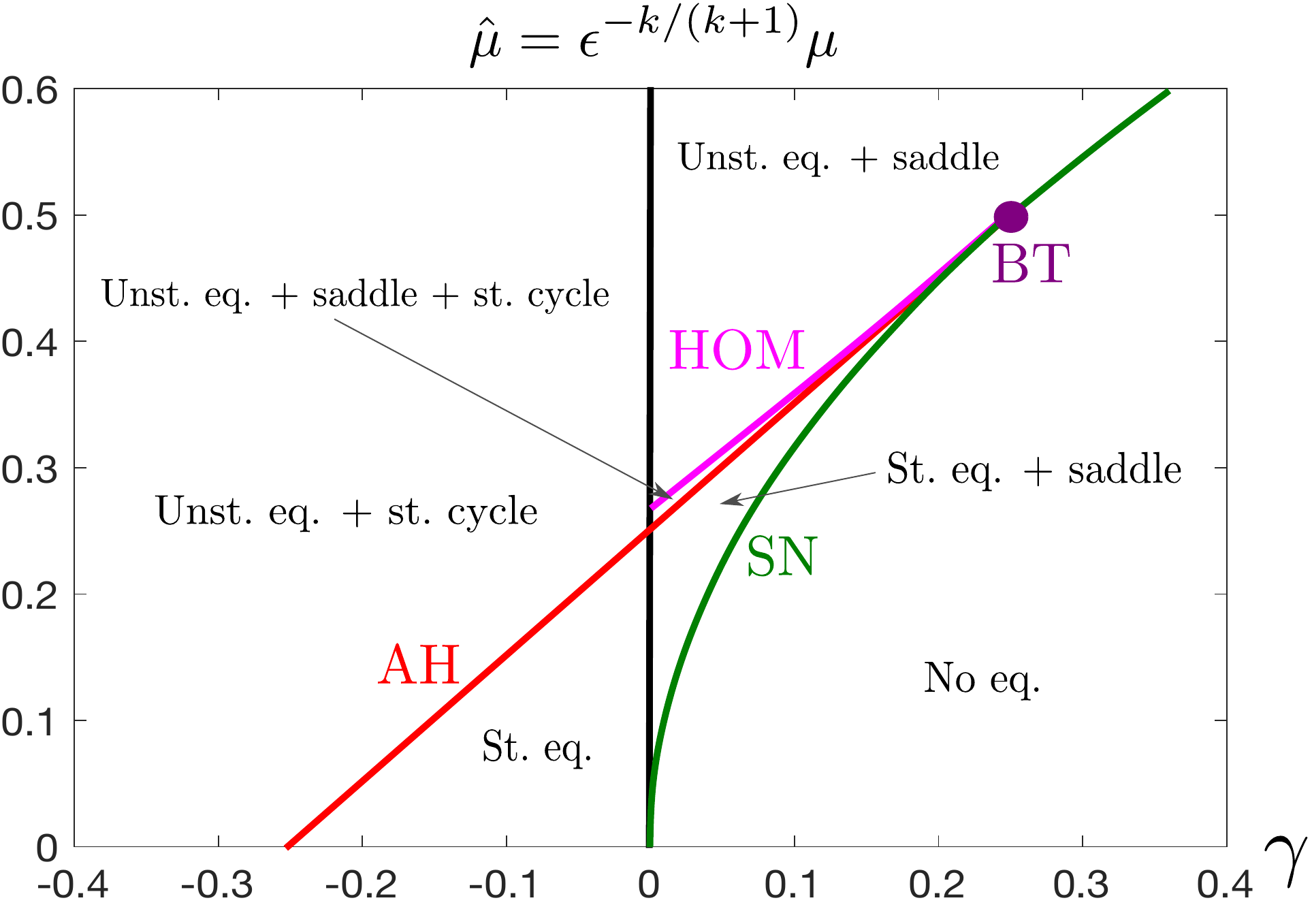}}
	
	\subfigure[]{\includegraphics[width=.48\textwidth]{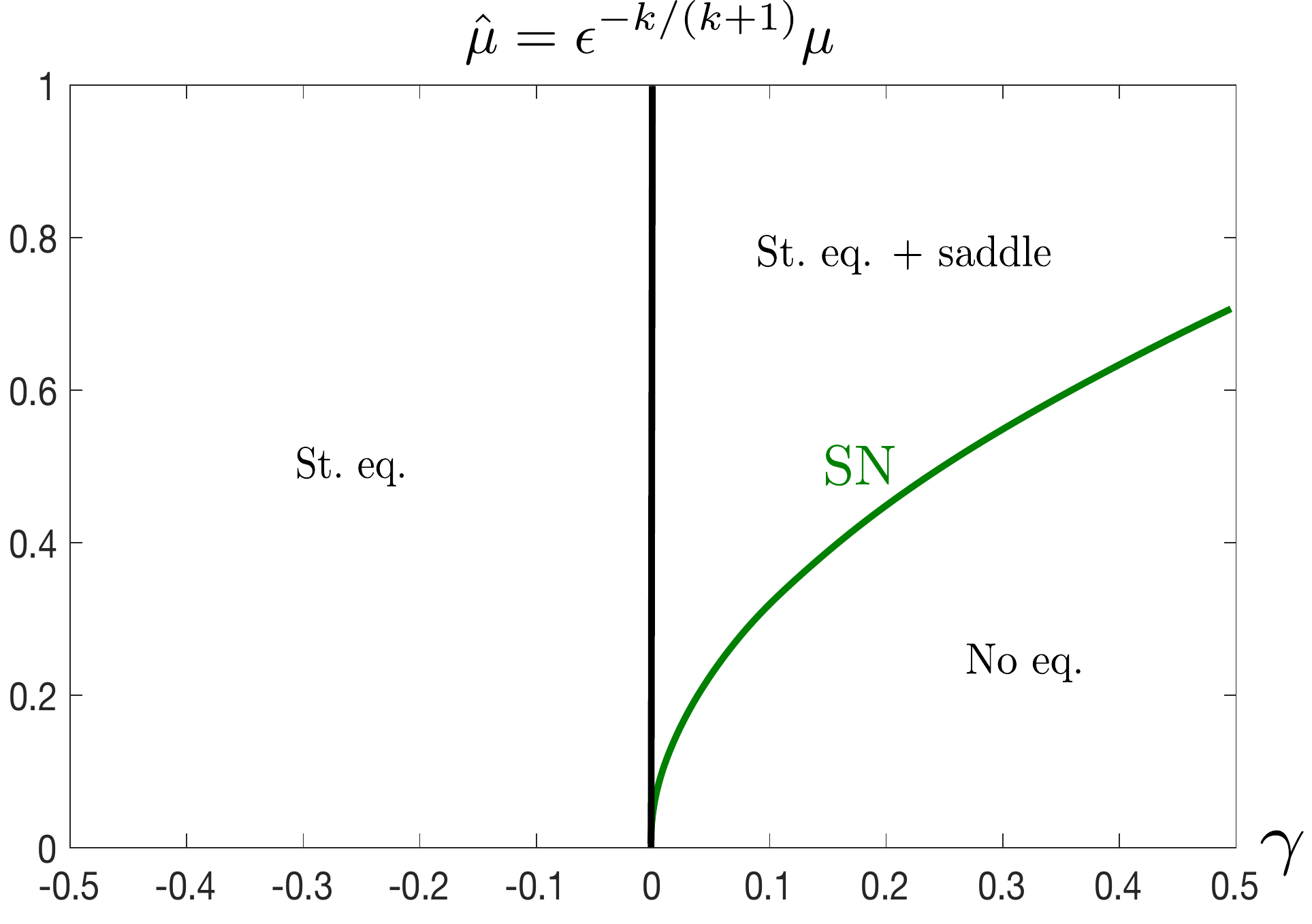}}
	\subfigure[]{\includegraphics[width=.48\textwidth]{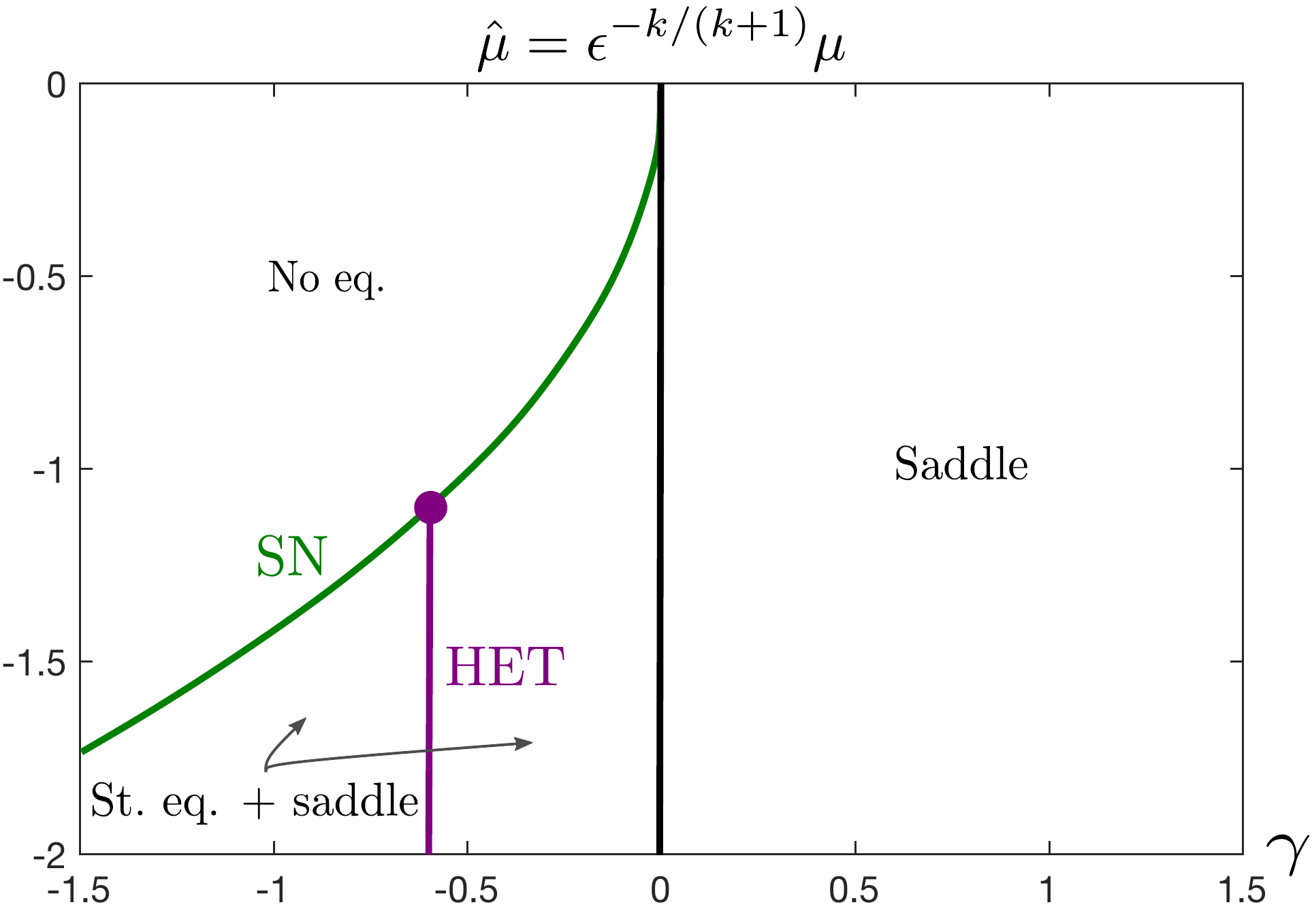}}
	\caption{2-parameter bifurcation diagrams for all 12 unfoldings 
	for the desingularized system \eqref{eq:desing_prob2}. Saddle-node (SN), supercritical Andronov-Hopf (AH), homoclinic (HOM) and Bogdanov-Takens (BT) bifurcations are shown in green, red, magenta and purple respectively. Homoclinic curves were computed numerically using MatCont \cite{MATCONT}. \SJ{Here} $(k,\beta) = (1,1/2)$. Unfoldings corresponding to BE singularities with I/O orientation of the Filippov flow can be plotted on the same diagram since I/O correspond to $\gamma<0$/$\gamma > 0$, while $\gamma = 0$ is omitted. (a): Cases BF$_{1,2,3}$, with $\tau = \delta = 1$. Cases BF$_{1,2}$ are contained within $\gamma > 0$ and separated by the homoclinic curve, with BF$_1$ (BF$_2$) on the left (right). BF$_3$ is contained within $\gamma < 0$. (b): Cases BN$_{2,3}$ with $\tau = 2$, $\delta = 1/2$. BN$_3$ (BN$_2$) is contained within $\gamma < 0$ ($\gamma>0$). Note the possibility for oscillatory dynamics in case BN$_2$. 
	(c): Cases BN$_{1,4}$ with $\tau=-2$, $\delta = 1/2$. BN$_1$ (BN$_4$) is contained within $\gamma < 0$ ($\gamma>0$). \WM{We do not show cases BF$_{4,5}$ here, since they are qualitatively similar to BN$_{1,4}$.}
	(d) Cases BS$_{1,2,3}$ \SJ{with $\tau=1, \ \delta = -1$}. Cases BS$_{1,2}$ are contained within $\gamma < 0$ and separated by a \SJ{(numerically computed)} distinguished heteroclinic, \SJ{denoted HET and shown in purple} (see item (iv) in the text). Case BS$_3$ is contained within $\gamma > 0$. The diagrams in (a)-(c) all extend for $\hat \mu < 0$, and the diagram in (d) extends for $\hat \mu > 0$.}
	\label{fig:bifs}
\end{figure}

A proof for Theorem \ref{thm:bifs} based on an adaptation of the proof of \cite[Theorem 3.6]{Jelbart2020d} is given in Section \ref{ssec:proof_of_the_bifurcation_results}. The idea is that bifurcations can be identified first for the desingularized system \eqref{eq:desing_prob2}, for which saddle-node, Andronov-Hopf and homoclinic bifurcations are identified along parameter space curves given by
\begin{equation}
\label{eq:sn_desing}
\hat \mu_{sn}(\gamma) := \lim\limits_{\epsilon \to 0} \frac{\mu_{sn}(\gamma,\epsilon)}{\epsilon^{k/(1+k)}}
= \frac{(1+k) \delta} k \left(\frac{k \beta \gamma}{\delta} \right)^{1/(1+k)} , \qquad 
\frac \gamma \delta > 0 ,
\end{equation}
\begin{equation}
\label{eq:ah_desing}
\hat \mu_{ah}(\gamma) := \lim\limits_{\epsilon \to 0} \frac{\mu_{ah}(\gamma,\epsilon)}{\epsilon^{k/(1+k)}}
= \frac{k \delta + \tau \gamma}{k} \left(\frac {k\beta} \tau \right)^{1/(1+k)} , \qquad 
\gamma \in \left(- \infty, \frac{\delta}{\tau} \right), \ \tau > 0 ,
\end{equation}
and 
\begin{equation}
\label{eq:hom_desing}
\begin{split}
\hat \mu_{hom}(\gamma) :&= \lim\limits_{\epsilon \to 0} \frac{\mu_{hom}(\gamma,\epsilon)}{\epsilon^{k/(1+k)}} \\
&= \left(\frac{k \beta}{\tau} \right)^{1/(1+k)} \left(\frac{(1+k) \delta} k + \frac{\tau}{k} \left(\gamma - \frac{\delta}{\tau}\right) \right) + \mathcal O\left( \left(\gamma - \frac{\delta}{\tau}\right)^2 \right), 
\end{split}
\end{equation}
respectively, where $\hat \mu_{hom}(\gamma)$ is defined for $\gamma < \delta / \tau$ and $\tau > 0$ in a neighbourhood of the Bogdanov-Takens point $(\hat \mu_{bt}, \gamma_{bt}) : = (\lim_{\epsilon \to 0} \epsilon^{-k/(1+k)} \mu_{bt}(\epsilon) , \gamma_{bt})$. 

\begin{remark}
	The corresponding statement for regularized PWS systems is obtained by taking the singular limit $\epsilon \to 0$ in Theorem \ref{thm:bifs}. In this context results should be stated in terms of the desingularized system \eqref{eq:desing_prob} \SJJ{(or \eqref{eq:desing_prob2})} alone, for which the identified bifurcations are described by equations \eqref{eq:sn_desing}, \eqref{eq:ah_desing} and \eqref{eq:hom_desing}. 
\end{remark}

Theorem \ref{thm:bifs} yields four qualitatively distinct 2-parameter bifurcation diagrams. These are shown for the desingularized system \eqref{eq:desing_prob2}, i.e.~in the limit $\epsilon \to 0$, in Figure \ref{fig:bifs}. Theorem \ref{thm:bifs} asserts that the corresponding diagrams for $\epsilon \ll 1$ sufficiently small are qualitatively similar. We make the following observations with respect to Figure \ref{fig:bifs}:
\begin{enumerate}
	\item[(i)] All 12 singularly perturbed BEBs are represented: BF$_{1,2,3}$ in (a), BN$_{2,3}$ in (b), BN$_{1,4}$ in (c), and BS$_{1,2,3}$ in (d). Cases BF$_{4,5}$ are qualitatively similar to BN$_{1,4}$ respectively in (c).
	\item[(ii)] Cases for which the underlying PWS BE has an incoming (outgoing) Filippov flow, see again Figure \ref{fig:be} and Table \ref{tab:classification}, are contained within $\gamma < 0$ ($\gamma > 0$).
	\item[(iii)] Cases BF$_{1,2}$ are both contained within $\gamma > 0$ in (a). The homoclinic branch represents the continuation of the separatrix which constitutes a boundary between the two cases, with BF$_1$ (BF$_2$) lying the the left (right) of this curve. Theorem \ref{thm:bifs} only provides a local parameterisation of the homoclinic curve. A global parameterisation is not given in this work; homoclinic curves in Figure \ref{fig:bifs} have been obtained by numerical continuation using MatCont \cite{MATCONT}. \SJ{However,} additional properties of the homoclinic branch in Figure \ref{fig:bifs}(a) are also described in Section \ref{ssec:results_separatrices}. 
	\item[(iv)] Cases BS$_{1,2}$ are both contained within $\gamma < 0$, and separated by a distinguished solution which connects the unstable manifold of the saddle \SJ{along} the (unique) trajectory which is tangent to the strong eigendirection \SJ{of} the stable node. This is \SJ{also discussed} in Section \ref{ssec:results_separatrices}.
	\item[(v)] Andronov-Hopf and Bogdanov-Takens bifurcations are supercritical. Subcritical bifurcations are possible in the equivalent local normal form obtained by reversing time in system \eqref{eq:normal_form}.
	\item[(vi)] All bifurcations are `singular' in system \eqref{eq:normal_form} \WM{in} the sense that they occur within an $\epsilon-$dependent domain which shrinks to zero as $\epsilon \to 0$, at a rate prescribed by the scaling \eqref{eq:scaling}. 
	\item[(vii)] The BN$_2$ bifurcation in (b) features `hidden oscillations', i.e.~oscillations which cannot be identified in the PWS system \eqref{eq:PWS_normal_form}, within the wedge-shaped region bounded by the Andronov-Hopf and homoclinic curves.
	\item[(viii)] The decay coefficient $k \in \mathbb N_+$ associated with the regularization does not effect the topology of the bifurcation diagrams. It follows that within the class of regularizations defined by Assumptions \ref{assumption:2}-\ref{assumption:3}, the observed dynamics are qualitatively independent of the choice of regularization.
	\item[(ix)] \SJJ{Each of (non-equivalent) 2-parameter bifurcation diagram in Figure \ref{fig:bifs} can be obtained from any of the others by suitably varying the additional parameters $(\tau,\delta)$, either across one of the boundaries $\delta =0$, $\tau = 0$ or $\Delta = 0$, or through the origin $\tau = \delta = 0$; see again Table \ref{tab:classification}. A complete description of the dynamics involves the unfolding a (singular) codimension-4 bifurcation at $(\tau, \delta, \gamma ,\hat \mu) = (0,0,0,0)$. This unfolding is expected to involve (singular) codimension-3 bifurcations, and the unfolding of these bifurcations should involve the diagrams in Figure \ref{fig:bifs}.}
\end{enumerate}


\subsection{Separatrices: The boundaries between BF$_{1,2}$ and BS$_{1,2}$}
\label{ssec:results_separatrices}

In this section \SJ{we} present \SJ{a result} on the \SJ{homoclinic double-separatrix which constitutes a boundary between singularly perturbed BF$_{1,2}$ bifurcations. A heteroclinic double-separatrix forming a boundary between singularly perturbed BS$_{1,2}$ bifurcations is also discussed.}

\

The BF$_{1,2}$ boundary is formed by a \SJJ{saddle-homoclinic connection,} which is (\KUK{partially}) described in the following result. We define
		\begin{equation}
		\label{eq:gamma_hom}
		\gamma_{hom,0}:=- \frac{1}{2} e^{-\tau t_d/2} \sqrt{- \Delta} \csc \left(\frac{\sqrt{-\Delta}}{2} t_d \right),
		\end{equation}
		where $t_d$ is the first positive root of
		\begin{align}\label{eq:Rt}
		R(t): = 
		1 + e^{-\tau t / 2} \left( \frac{\tau}{\sqrt{-\Delta}} \sin \left(\frac{\sqrt{-\Delta}}{2} t \right) - \cos\left(\frac{\sqrt{-\Delta}}{2} t \right) \right).
		\end{align}

\begin{proposition}
	\label{prop:BF}
	\KUK{({Outer expansion of the homoclinic separating BF$_1$ and BF$_2$}) There exist an $E_0>0$ sufficiently small, constants $\mu_+, K > 0$ and a continuous function $\gamma_{hom}^{outer}:[0,E_0]\times [0,\mu_+] \to \mathbb R$ such that for all $(\epsilon,\mu)$ in the sector defined by 
		\begin{equation}
		\label{eq:hom_interval}
		0\le \epsilon \le E_0 \mu\quad \mbox{and}\quad \mu \in [0,\mu_+],
		\end{equation}
		system \eqref{eq:normal_form} has a \SJJ{saddle-homoclinic} $\Gamma_{hom}^{outer}(\epsilon,\mu)$ along $\gamma = \gamma_{hom}^{outer}(\epsilon\mu^{-1},\mu)$. In particular,
		\[
		\gamma_{hom}^{outer}(0,0) = \gamma_{hom,0},
		\]
		and for each fixed $\mu\in (0,\mu_+)$, $\lim_{\epsilon\rightarrow 0}\Gamma_{hom}^{outer}(\epsilon,\mu)$ is a PWS homoclinic.}

	\KUK{({Inner expansion of the homoclinic separating BF$_1$ and BF$_2$}) At the same time, there exists an $\hat \epsilon_0>0$ small and a continuous function $\gamma_{hom}^{inner}:[0,\hat \epsilon_0]\rightarrow \mathbb R$ such that for all $\hat \mu \ge \hat \epsilon_0^{-k/(1+k)}$, the system \eqref{eq:desing_prob} has a \SJJ{saddle-homoclinic} $\widehat \Gamma_{hom}^{inner}(\hat \mu)$ along $\gamma=\gamma_{hom}^{inner}(\hat \mu^{-(1+k)/k})$. In particular, \[\gamma_{hom}^{inner}(0)=\gamma_{hom,0},\] and for each fixed $\hat \mu\ge \hat \epsilon_0^{-k/(1+k)}$ there exists an $\epsilon_0>0$ small enough such that for each $\epsilon \in (0,\epsilon_0)$ there exists \SJJ{saddle-homoclinic} $\Gamma_{hom}^{inner}(\epsilon,\hat \mu)$ of \eqref{eq:normal_form} along $\gamma=\gamma_{hom}^{inner}(\hat \mu^{-(1+k)/k})+o(1)$, $\mu = \epsilon^{k/(1+k)} \hat \mu$. 
		Here $\lim_{\epsilon\rightarrow 0}\Gamma_{hom}^{inner}(\epsilon,\hat \mu)$ is just $(x,y)=(0,0)$. }

\end{proposition}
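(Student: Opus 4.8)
The plan is to treat the two claims as matched \emph{inner} and \emph{outer} descriptions of the same distinguished connection, sharing the common leading value $\gamma_{hom,0}$, which I would isolate first. Under the rescaling \eqref{eq:xyXYpm} the desingularized problem becomes \eqref{eq:xyXYpmeqs}; setting $\hat\epsilon = 0$ (equivalently $\hat\mu \to \infty$) and reparametrizing time by $d\sigma = \widehat Y^k\,dt$ on $\{\widehat Y > 0\}$ reduces the interior dynamics to the linear focus flow $\widehat X' = 1 + \tau\widehat X - \delta\widehat Y$, $\widehat Y' = \widehat X$, whose equilibrium $(0,\delta^{-1})$ is a focus precisely because $\Delta<0$ in the BF cases, while $\{\widehat Y = 0\}$ is an invariant degenerate line. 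The singular connection is then a focus-flow arc that leaves $\{\widehat Y=0\}$, encircles the focus once, and returns to $\{\widehat Y=0\}$; solving the linear system explicitly, the return condition is exactly $R(t_d)=0$ with $R$ as in \eqref{eq:Rt}, and imposing closure of the arc through the boundary layer produces the closed form $\gamma_{hom,0}$ in \eqref{eq:gamma_hom}. I would record at the outset that $R(0)=0$, $R'(0)\neq 0$, and that $t_d$ is a simple first positive root, so that $\gamma_{hom,0}$ is well defined.

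For the inner statement I would work directly with the desingularized system \eqref{eq:desing_prob} (equivalently \eqref{eq:desing_prob2}), which by \cite[Lemma 3.5]{Jelbart2020d} has a hyperbolic saddle in the relevant parameter range. The leading-order arc above is a singular saddle-homoclinic in the $\hat\mu\to\infty$ limit, and I would promote it to a genuine saddle-homoclinic of \eqref{eq:desing_prob} for large but finite $\hat\mu$ by a Melnikov/implicit-function argument: define the signed splitting distance $D(\gamma,\hat\epsilon)$ between the unstable and stable manifolds of the saddle on a transverse section, verify $D(\gamma_{hom,0},0)=0$ together with $\partial_\gamma D(\gamma_{hom,0},0)\neq 0$, and solve for $\gamma_{hom}^{inner}(\hat\epsilon)$ with $\gamma_{hom}^{inner}(0)=\gamma_{hom,0}$, where $\hat\epsilon=\hat\mu^{-(1+k)/k}$. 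Lifting to \eqref{eq:normal_form} for fixed large $\hat\mu$ and $\epsilon\to0$ then invokes the blow-up picture of Section \ref{ssec:results_blow-up}: the stable branch of the saddle lies along the attracting critical manifold $S$/$\mathcal W_0$, so Fenichel theory together with an exchange-lemma estimate for the passage past the non-hyperbolic points $Q$, $Q_{bfb}$ preserves the connection $\mathcal O(1)$-close, yielding $\Gamma_{hom}^{inner}(\epsilon,\hat\mu)$ along $\gamma=\gamma_{hom}^{inner}(\hat\mu^{-(1+k)/k})+o(1)$.

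For the outer statement I would instead scale so the focus structure is $\mathcal O(\mu)$ and the switching layer has relative width $\epsilon/\mu$. At $\epsilon=0$ the PWS normal form \eqref{eq:PWS_normal_form} has, for each $\mu\in(0,\mu_+]$, a distinguished double-separatrix in which the separatrix of the visible fold reaches the sliding pseudo-equilibrium on $\Sigma_{sl}$ after encircling the focus; this is exactly the BF$_1$/BF$_2$ boundary and defines $\gamma_{hom}^{outer}(0,\mu)$, and a direct computation gives $\gamma_{hom}^{outer}(0,\mu)\to\gamma_{hom,0}$ as $\mu\to0$, matching the inner limit. To switch on $0<\epsilon\le E_0\mu$ I would apply the cylindrical blow-up \eqref{eq:bu_cyl}, which replaces $\Sigma$ by a normally hyperbolic attracting critical manifold carrying a reduced flow conjugate to the Filippov flow \eqref{eq:Filippov_VF}; the sliding segment perturbs by Fenichel theory, the focus arc perturbs regularly, and a splitting function in $\gamma$ that is again transverse produces a continuous $\gamma_{hom}^{outer}(\epsilon\mu^{-1},\mu)$ on the sector \eqref{eq:hom_interval}, with $\lim_{\epsilon\to0}\Gamma_{hom}^{outer}(\epsilon,\mu)$ the PWS homoclinic for each fixed $\mu$.

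The main obstacle, in both parts, is the passage of the connection through the regularized switching region and past the non-hyperbolic blow-up points, where Fenichel theory fails and one must instead control the transport of the saddle's stable and unstable manifolds by exchange-lemma-type estimates in the charts \eqref{eq:K1}, \eqref{eq:bu_Q}, \eqref{eq:bu_Qbfb}; quantifying the loss of normal hyperbolicity there, and confirming that it does not destroy the transversality in $\gamma$ that drives the implicit function theorem, is the technical crux. A secondary, purely computational, difficulty is verifying that $t_d$ is a simple first positive root of \eqref{eq:Rt} and that the splitting distance depends nondegenerately on $\gamma$, so that the connection indeed organizes into the asserted codimension-one curves.
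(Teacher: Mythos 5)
Your proposal is correct and follows essentially the same route as the paper: the outer part via the $\mu$-scaling, the cylindrical blow-up and the standard Filippov--regularization correspondence, and the inner part via the scaling chart \eqref{eq:xyXYpm}, the singular saddle-homoclinic of the resulting nonstandard slow-fast system \eqref{eq:xyXYpmeqs} at $\gamma = \gamma_{hom,0}$, and an implicit-function argument applied to a splitting function that is transverse in $\gamma$. A few details the paper makes explicit that your sketch glosses over: the $\gamma$-transversality you list as something to ``verify'' is automatic, because the fast focus arc is independent of $\gamma$ while the saddle sits on the critical manifold at $X=-\gamma^{-1}$ (the repelling equilibrium of the reduced flow \eqref{eq:reducedprop35}); for $k\ge 2$ the line $\{Y=0\}$ is fully nonhyperbolic, so an additional cylindrical blow-up inside the inner system is needed before Fenichel theory applies; the degenerate point governing take-off is the fold at the origin of the inner system (handled via \cite{Krupa2001a}), not $Q$ or $Q_{bfb}$; and the final lift to $\epsilon>0$ at fixed $\hat\mu$ is only a regular (continuity-in-$\mu$) perturbation, since the connection then lies in $\{Y>0\}$ bounded away from all degenerate sets, so no exchange-lemma estimates are required there.
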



\begin{figure}[t!]
	\centering
	\subfigure[]{\includegraphics[width=.48\textwidth]{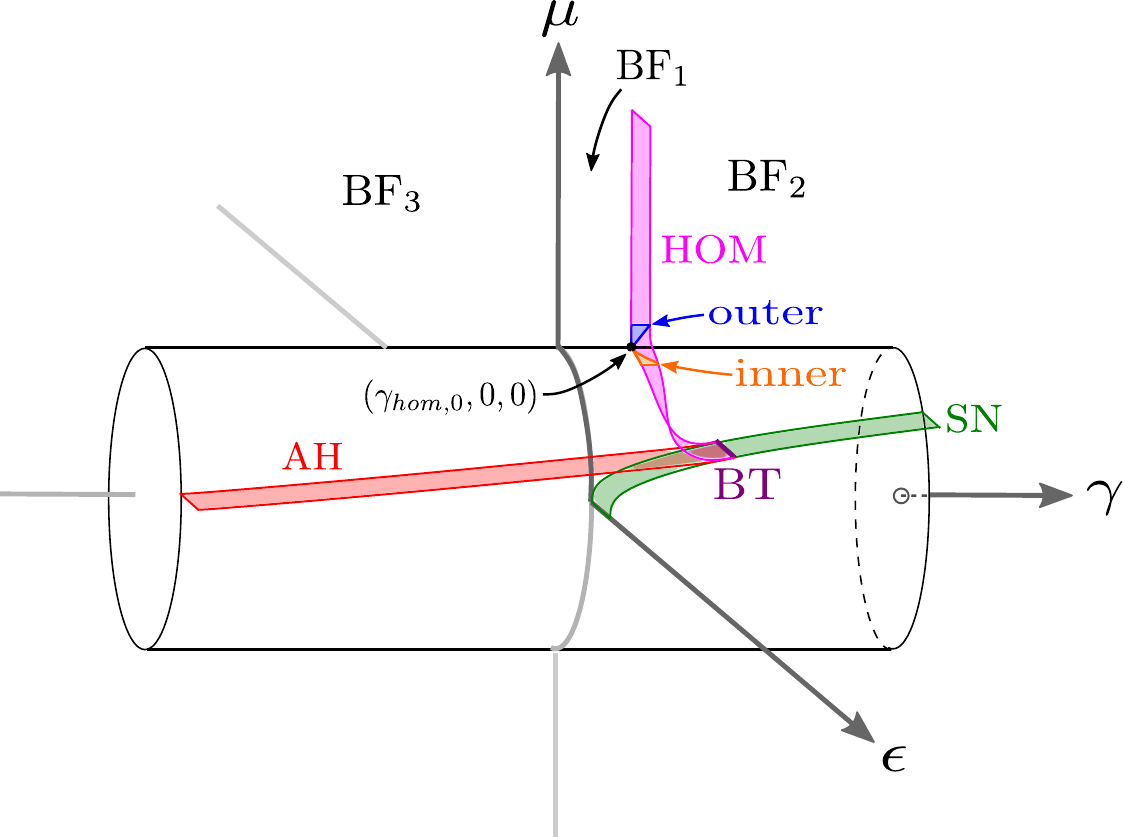}}
	\subfigure[]{\includegraphics[width=.48\textwidth]{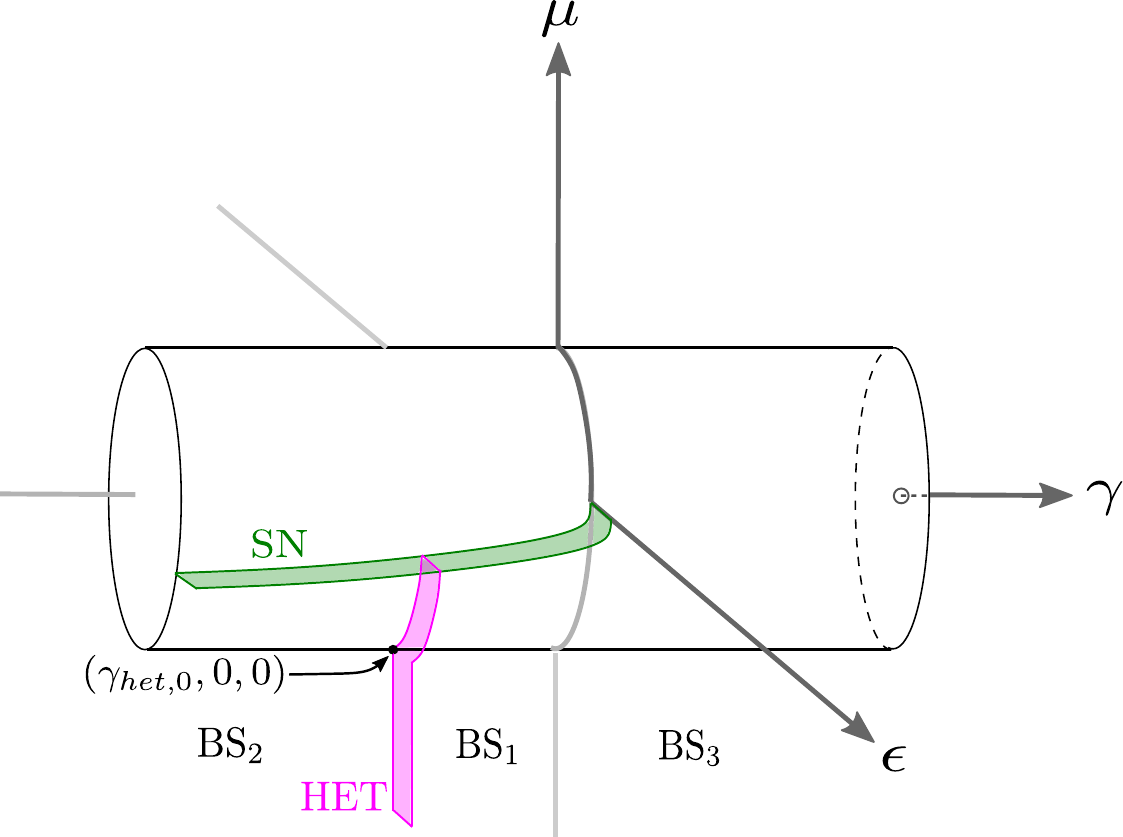}}
	\caption{Bifurcations and separatrices in $(\gamma,\mu,\epsilon)-$space. Cylindrical blow-up along $\mu = \epsilon = 0$ via \eqref{eq:bu_par} allows for the representation of both scaling regimes $\mu = \mathcal O(\epsilon^{k/(1+k)})$ and $\mu = \mathcal O(1)$ in a single space. Corresponding bifurcation diagrams from Figure \ref{fig:bifs} appear on the blow-up cylinder. (a) Global bifurcation diagram for singularly perturbed BF$_i$ bifurcations with $i=1,2,3$. The homoclinic branch in magenta forms a boundary between singularly perturbed BF$_1$ and BF$_2$ bifurcations. Proposition \ref{prop:BF} describes \SJ{the inner and outer asymptotics of the homoclinic branch for $\mu > 0$ and $\hat \mu \gg 1$ respectively, within non-overlapping wedges shown in blue and orange about the point $(\gamma_{hom,0},0,0)$ given by \eqref{eq:gamma_hom}.} 
	(b) \SJ{Expected} global bifurcation diagram for singularly perturbed BS bifurcations, with the distinguished heteroclinic branch forming a boundary between singularly perturbed BS$_1$ and BS$_2$ bifurcations; \SJ{see} \SJ{Remark \ref{rem:bs12}.} 
	}
	\label{fig:separatrices}
\end{figure}

\KUK{A proof is given in Section \ref{ssec:proof_of_prop_bf}. Proposition \ref{prop:BF} asserts the persistence of the PWS homoclinics in an outer regime and an inner regime. \SJJ{This} constitutes a (partial) boundary between singularly perturbed BF$_1$ and BF$_2$ unfoldings for $0 < \epsilon \ll 1$.}

\WM{
\begin{remark}
 Notice that the outer regime covers $\mu \sim \epsilon$ whereas the inner regime 
 covers $\mu \sim \epsilon^{k/(1+k)}$. The two regimes do not overlap for $\epsilon\rightarrow 0$. In principle, we should be able to cover the gap using our method, but we leave that for future work.
\end{remark}
}

Combining Theorem \ref{thm:bifs}(iv) and Proposition \ref{prop:BF}, we have asymptotic information about the branch of homoclinic solutions in Figure \ref{fig:bifs}(a) for $\hat \mu \sim \hat \mu_{bf}$, $\hat \mu \gg 1$ and $\mu \geq 0$. Our findings are represented schematically in Figure \ref{fig:separatrices}(a), which shows the \SJ{expected} global bifurcation diagram in $(\gamma,\mu,\epsilon)-$space after \WM{a weighted} cylindrical blow-up
\begin{equation}
\label{eq:bu_par}
\eta \geq 0, \ \left(\tilde \epsilon, \tilde \mu \right) \in S^1 \mapsto
\begin{cases}
\epsilon = \eta \tilde \epsilon , \\
\mu = \eta^{k/(1+k)} \tilde \mu ,
\end{cases}
\end{equation}
which replaces the degenerate line $\{(\gamma,0,0):\gamma \in \mathbb R\}$ corresponding to the BE singularity by the cylinder $\{\eta = 0\} \times \mathbb R \times S^1$. After desingularization in the family rescaling chart $\tilde \epsilon = 1$, the bifurcation diagram in Figure \ref{fig:bifs}(a) is identified on the cylinder, i.e.~within $\{\eta = 0\}$, which is invariant. The bifurcation diagram for $\mu > 0$ is bounded above the cylinder in Figure \ref{fig:separatrices}(a).





\begin{remark}
	\label{rem:bs12}
\SJ{
The BS$_{1,2}$ boundary is formed by the distinguished heteroclinic connection which connects saddle and node equilibria, tangentially to the strong eigendirection of the node. In the PWS normal form \eqref{eq:PWS_normal_form} obtained in the} \KUK{in the dual limit} \SJ{$\epsilon \to 0^+$, $\mu\rightarrow 0^-$,
%
this distinguished heteroclinic connection occurs for
	\[
	\gamma_{het,0} = \frac{\tau - \sqrt{-\Delta}}{2}.
	\]}
%
\KUK{It is straightforward to obtain an analagous result to Proposition \ref{prop:BF}, describing inner and outer expansions of such a heteroclinic connection, see \SJ{the} illustration in Figure \ref{fig:separatrices}(b). For simplicity, we have decided against including this result. Furthermore, numerical investigations (see Figure \ref{fig:bifs}(d)) support the existence of a simple (transverse) connection to the branch of saddle-node bifurcations with base along $\{(\gamma_{het}(0), \mu, 0) : \hat \mu < \hat \mu_{sn}(\gamma_{het}(0)) \}$ as shown in Figure \ref{fig:separatrices}(b).}
\end{remark}

\subsection{Explosion in case BN$_3$}
\label{ssec:results_bn3}

The case BN$_3$ in Figure \ref{fig:bn3} is somewhat special, due to the existence of a continuous family of PWS homoclinic cycles for $\mu = \epsilon = 0$. As indicated in Figure \ref{fig:pws_cycs}, we parametrize this family using the negative $x-$coordinate: 
\begin{equation}
\label{eq:PWS_cycs}
\Gamma(s) = \Gamma_{X^+}(s) \cup \Gamma_{sl}(s),
\end{equation}
for any $s\in (0,s_0)$ with $s_0>0$ sufficiently small, 
where $\Gamma_{X^+}(s)$ is the backward orbit of $(-s,0)$ following $X^+\vert_{\mu=0}$ while $\Gamma_{sl}(s)$ is the forward orbit of $(-s,0)$ following $X_{sl}\vert_{\mu=0}$. 
Note that the orbits $\Gamma(s)$ are homoclinic to a BN$_3$ singularity, and should not therefore be confused with homoclinic orbits $\Gamma_{hom}$ from Proposition \ref{prop:BF} that are homoclinic to a hyperbolic sliding equilibrium on $\Sigma$.

\begin{figure}[t!]
	\centering
	\includegraphics[scale=.6]{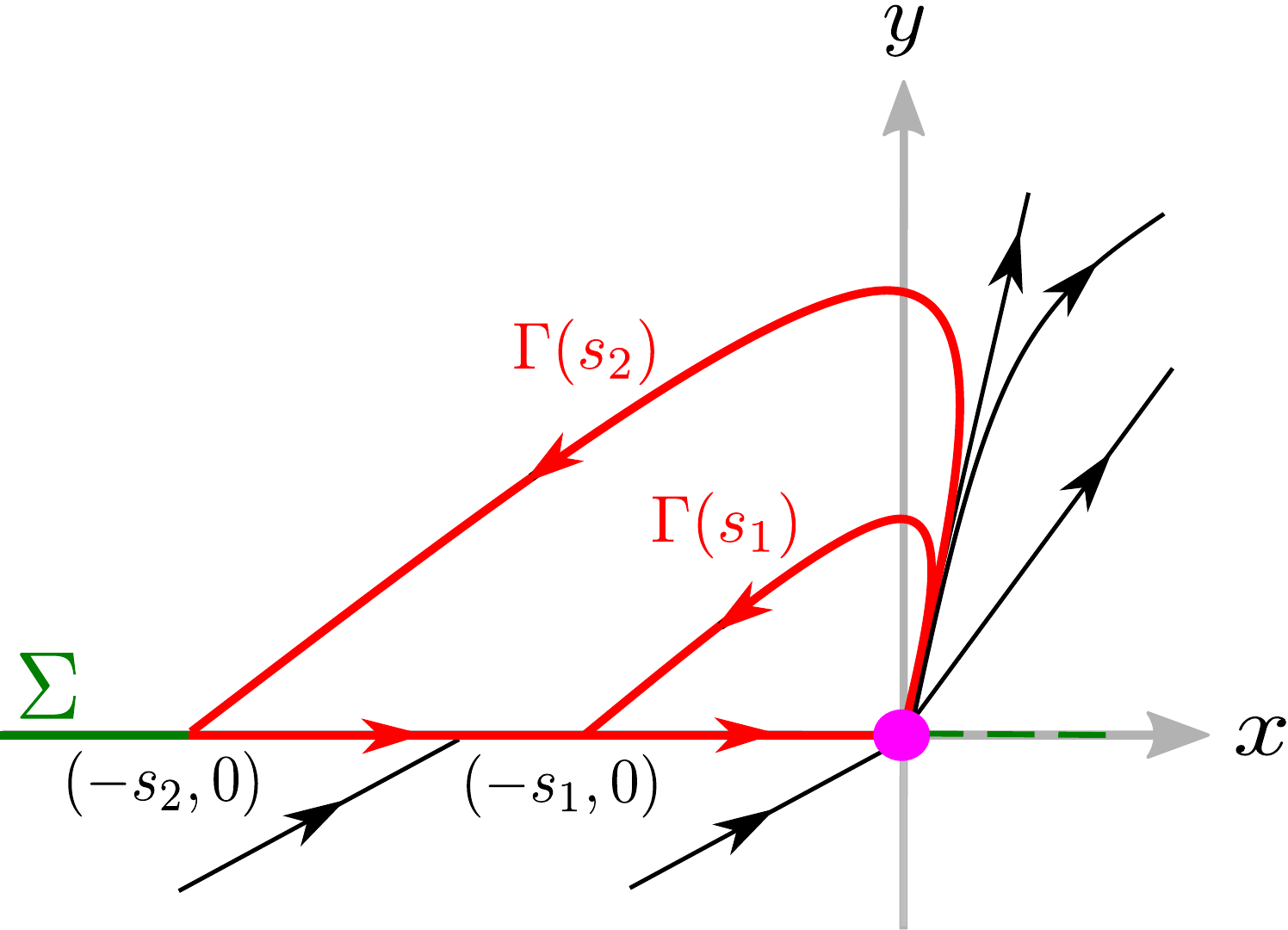}
	\caption{Representative PWS homoclinic orbits $\Gamma(s_1)$ and $\Gamma(s_2)$ defined by \eqref{eq:PWS_cycs}, with $s_2 > s_1$.}
	\label{fig:pws_cycs}
\end{figure}




Since $\Gamma(s)$ only exists for parameter values $\mu = \epsilon = 0$ corresponding to a BE singularity, we are motivated to consider the problem within the blown-up space described in Section \ref{ssec:results_blow-up}. Recall that on the sphere $\nu=0$ there exists an attracting two-dimensional center manifold $\mathcal J$ of a partially hyperbolic point $q_a$. Essentially, this manifold provides an extension of the critical manifold onto the sphere $\nu=0$. At the same time, for the present case BN$_3$, there is also a hyperbolic point $q_w$ on the sphere $\nu=0$, along $\check \epsilon=\check \mu=0$ with a two-dimensional stable manifold $\mathcal S : = W^s(q_w)$, see Figure \ref{fig:bu2}(c). Let $\mathcal J_{\hat \mu}$ and $\mathcal S_{\hat \mu}$ denote the manifolds obtained from $\mathcal J$ and $\mathcal S$ after restriction to the invariant subsets $\{\hat \mu = const.\}$ defined via the scaling \eqref{eq:scaling}.

The following result identifies the existence of a heteroclinic connection between $q_a$ and $q_w$ which will play an important role in the unfolding of the PWS cycles. The situation is sketched in Figure \ref{fig:bn_het_sphere}.

\begin{lemma}
	\label{lem:het}
	For each fixed $\gamma<0$, $\mathcal J$ and $\mathcal S$ intersect in a unique heteroclinic orbit connecting $q_a$ with $q_w$. 
	%
	%
\end{lemma}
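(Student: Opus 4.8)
The plan is to turn the statement into a planar shooting problem by exploiting the conserved quantity $\hat\mu$ from \eqref{eq:scaling}. I would fix $\gamma<0$ and work on the invariant sphere $\{\nu=0\}$, which is foliated by the two-spheres $\{\hat\mu=\mathrm{const.}\}$; each such leaf is flow-invariant, so on it the dynamics is effectively planar and Poincar\'e--Bendixson applies. In the scaling chart the flow on a leaf is given by \eqref{eq:desing_prob2} (and globally by \eqref{eq:desing_prob}), with $q_a$ appearing at the corner $Y\to 0^+$, $X\to-\infty$, $XY^k\to-\beta$ (the finite point $(x_1,\rho_1)=(-\beta,0)$ in \eqref{eq:desing_prob}) and $q_w$ at the boundary $\check\epsilon=0$ reached at infinity. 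The key simplifications I would use are that in \eqref{eq:PWS_normal_form} the field $X^-$ is constant and $X^+$ is linear to leading order, so the outer dynamics near $q_w$ reduce to an \emph{explicit} linear node and, since $q_w$ is hyperbolic, its stable manifold $\mathcal S$ and the leaf-traces $\mathcal S_{\hat\mu}$ are under explicit control; meanwhile $\mathcal J_{\hat\mu}$ is the distinguished trajectory leaving the partially hyperbolic point $q_a$ along its centre direction, namely the continuation of the critical manifold $\mathcal W_0$.

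The intersection $\mathcal J\cap\mathcal S$ is flow-invariant and meets each leaf in $\mathcal J_{\hat\mu}\cap\mathcal S_{\hat\mu}$. Since $\mathcal J_{\hat\mu}$ is a single (centre-manifold) orbit and $\mathcal S_{\hat\mu}$ a single in-leaf stable branch of $q_w$, each leaf carries \emph{at most one} connecting orbit, so it suffices to show that exactly one leaf gives a connection. For existence I would set up a shooting argument in $\hat\mu$: on a fixed leaf the branch $\mathcal S_{\hat\mu}$ acts as a separatrix dividing orbits captured by the interior equilibrium $P^*$ (the finite equilibrium of \eqref{eq:desing_prob2}, unique for $\gamma<0$, and which can be attracting) from orbits that proceed elsewhere on the sphere. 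Tracking $\mathcal J_{\hat\mu}$ forward I would define a signed separation function $D(\hat\mu)$ recording on which side of $\mathcal S_{\hat\mu}$ it passes; using the local asymptotics of $\mathcal J_{\hat\mu}$ at $q_a$ and the explicit node flow at $q_w$, I would show $D$ is continuous and changes sign across the admissible $\hat\mu$-range, so the intermediate value theorem yields $\hat\mu=c^\ast(\gamma)$ with $\mathcal J_{c^\ast}\subseteq\mathcal S_{c^\ast}$. Poincar\'e--Bendixson on the leaf together with the nullcline geometry of \eqref{eq:desing_prob2} would be used to ensure that this connecting orbit limits on $q_w$ rather than on $P^*$ or a periodic orbit.

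For uniqueness I would show $D$ is monotone in $\hat\mu$. This should follow from the observation that $\hat\mu$ enters the $X$-equation of \eqref{eq:desing_prob2} only through $\hat\mu\,Y^k$ with $Y^k>0$ (equivalently the $\hat\mu\,\rho_1^{k^2}$ term in \eqref{eq:desing_prob}), so increasing $\hat\mu$ displaces $\mathcal J_{\hat\mu}$ monotonically relative to the separatrix $\mathcal S_{\hat\mu}$. Monotonicity forces a unique zero $c^\ast(\gamma)$, and combined with the at-most-one-per-leaf observation this shows that $\mathcal J\cap\mathcal S$ consists of the single orbit lying in $\{\hat\mu=c^\ast(\gamma)\}$, which is the asserted unique heteroclinic connecting $q_a$ with $q_w$.

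The main obstacle is the global control of the slow-manifold branch $\mathcal J_{\hat\mu}$ as it traverses the leaf: one must rule out, for the connecting orbit itself, capture by the attracting interior equilibrium $P^*$ and by limit cycles, and establish both the sign change and the monotonicity of $D(\hat\mu)$ uniformly in $\gamma<0$. Because $q_a$ is only partially hyperbolic, the needed estimates on $\mathcal J_{\hat\mu}$ cannot come from standard hyperbolic manifold theory but must be extracted from the centre-manifold reduction at $q_a$ (possibly aided by a further local blow-up to sharpen its asymptotics), and the quantitative heart of the argument is the matching between this slow behaviour near $q_a$ and the explicit linear-node behaviour near $q_w$.
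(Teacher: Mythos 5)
Your proposal follows essentially the same route as the paper: foliate the blown-up sphere by the conserved quantity $\hat \mu$, reduce to planar dynamics on each leaf where Poincar\'e--Bendixson applies, obtain existence by a shooting/intermediate-value argument in $\hat\mu$ comparing the centre-manifold orbit $\mathcal J_{\hat\mu}$ emanating from $q_a$ with the stable branch $\mathcal S_{\hat\mu}$ of the hyperbolic saddle $q_w$ on a transverse section, and obtain uniqueness from sign-definiteness of the $\hat\mu$-derivative of the separation (a Melnikov argument). The paper additionally anchors the sign change at the two ends of an interval $[\hat\mu^-,\hat\mu^+]$ by showing that for $\hat\mu \ll -1$ the $\omega$-limit of $\mathcal J_{\hat\mu}$ is the node $q_n(\hat\mu)$ while for $\hat\mu \gg 1$ the $\alpha$-limit of $\mathcal S_{\hat\mu}$ is $q_n(\hat\mu)$, which is how it rules out capture by the interior equilibrium for the relevant orbits.

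There is one genuine gap in your uniqueness step. You assert that monotonicity of the separation $D(\hat\mu)$ ``should follow from the observation that $\hat\mu$ enters the $X$-equation only through $\hat\mu\,Y^k$ with $Y^k>0$.'' That observation only tells you the $\hat\mu$-derivative of the vector field is $(\rho_1^{k(2k+1)},0)$ in the coordinates of \eqref{eq:desing_prob}; the Melnikov \emph{integrand} is the wedge of this with the tangent to the connecting orbit, namely $-\rho_{1,het}'(t)\,\rho_{1,het}(t)^{k(2k+1)}$, and its sign is therefore controlled by the sign of $\rho_{1,het}'(t)$, not by positivity of $\rho_1^{k(2k+1)}$ alone. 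To make the integral sign-definite one must first prove that any heteroclinic is strictly monotone in $\rho_1$. The paper does this by a separate nullcline/trapping argument: the $\rho_1$-nullcline is $\{x_1=-\beta\}$, $\dot x_1\gtrless 0$ there according as $\rho_1\lessgtr\rho_{1,n}(\hat\mu)$, and if $\rho_{1,het}'$ vanished at some last time $t_1$ the orbit together with $\{x_1=-\beta\}$ would enclose a backward-invariant region, contradicting the $\alpha$-limit being $q_a$. Without this ingredient your monotonicity of $D$ (and hence uniqueness of the leaf carrying the connection) is not established; with it, your argument closes and coincides with the paper's.
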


\KUK{A similar result was proven in \cite[Proposition 2]{Kristiansen2019d} in the context of the substrate-depletion oscillator, which is degenerate as a BN$_3$ bifurcation (see Section \ref{ssec:outlook_canard_explosion} for further details). Nevertheless, the proof of 
	Lemma \eqref{lem:het}, which will be given in Section \ref{sec:thm_connection_proof}, in the course of proving Theorem \ref{thm:bn3} below, will follow the proof of  \cite[Proposition 2]{Kristiansen2019d}.} Using the parameter $\hat \mu$ defined in \eqref{eq:scaling}, the heteroclinic will be obtained for a unique value $\hat \mu=\hat \mu_{het}(\gamma)$ \SJ{corresponding to an intersection of manifolds $\mathcal S_{\hat \mu}$ and $\mathcal J_{\hat \mu}$ obtained as intersections of $\mathcal S$ and $\mathcal J$ with invariant level sets defined by \eqref{eq:scaling}.} The existence of a heteroclinic connection produces a family of heteroclinic cycles $\{\bar \Gamma(s)\}_{s \in (0,s_0)}$ with improved hyperbolicity properties, see Figure \ref{fig:bn_het_cycs}. In turn, this enables a perturbation of the PWS homoclinic \eqref{eq:PWS_cycs} into limit cycles for $0<\epsilon\ll 1$. 



\begin{figure}[t!]
	\centering
	\includegraphics[scale=2.2]{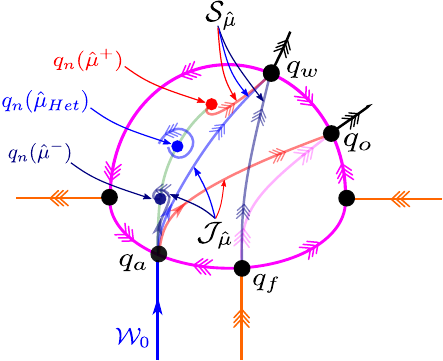}
	\caption{Dynamics on the the blow-up sphere in cases $\hat \mu = \hat \mu^-$, $\hat \mu = \hat \mu_{het}$ and $\hat \mu =\hat \mu^+$, where $\hat \mu^- < \hat \mu_{het} < \hat \mu^+$. \SJ{A 3-dimensional representation is possible after restricting to invariant subspaces defined by level sets \eqref{eq:scaling}. Part of the} path followed by the equilibrium $q_n(\hat \mu)$ under $\hat \mu-$variation is shown in green. \SJ{$\mathcal J_{\hat \mu}$ and $\mathcal S_{\hat \mu}$ denote manifolds obtained from $\mathcal J$ and $\mathcal S$ after restriction to $\{\hat \mu = const.\}$ via \eqref{eq:scaling}.} By Lemma \ref{lem:het}, $\mathcal S_{\hat \mu}$ and $\mathcal J_{\hat \mu}$ intersect for a unique parameter value $\hat \mu = \hat \mu_{het}$, providing a heteroclinic connection from $q_a$ to $q_w$, shown here in blue. This connection breaks regularly as $\hat \mu$ is varied over $\hat \mu_{het}$. Dynamics on each side of the connection are also shown, in dark blue and red.}
	\label{fig:bn_het_sphere}
\end{figure}

\begin{figure}[t!]
	\centering
	\includegraphics[scale=1]{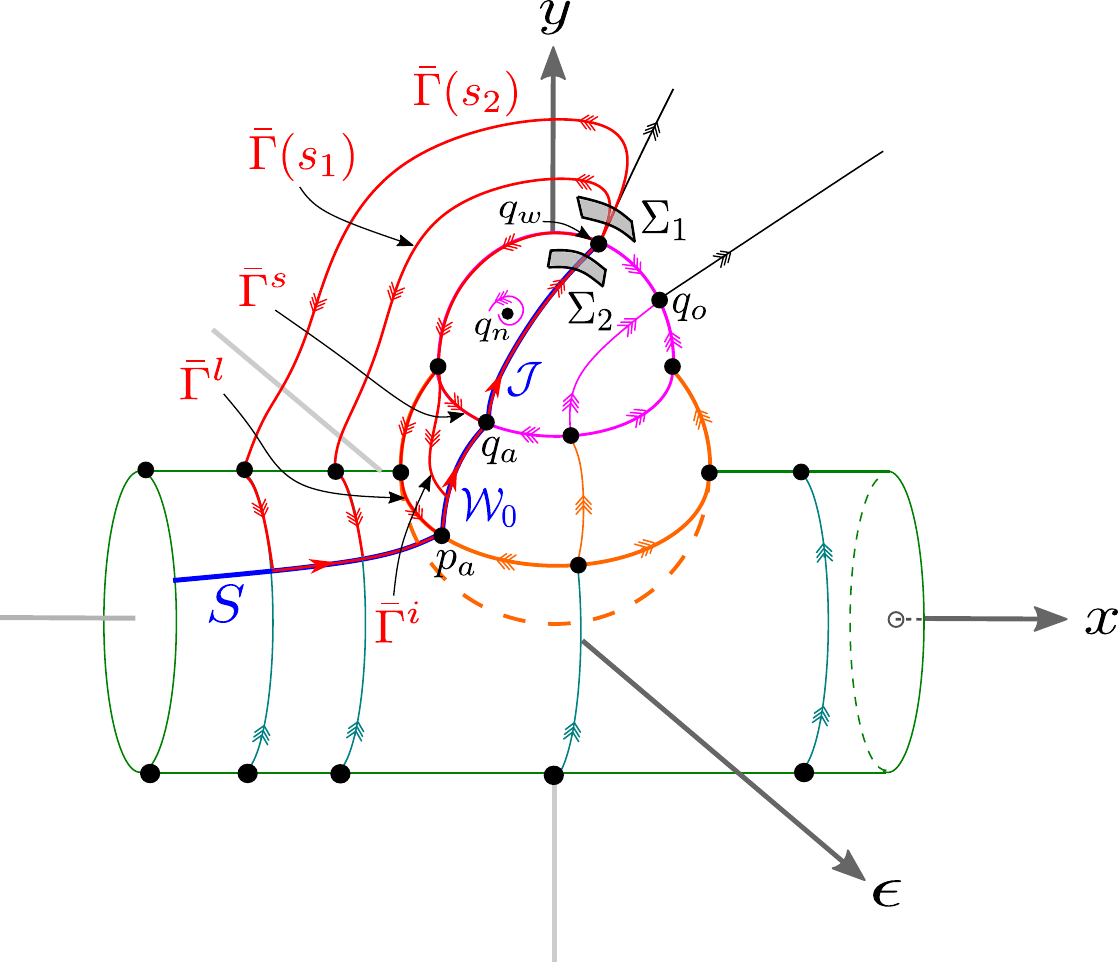}
	\caption{Nondegenerate singular cycles obtained when $\hat \mu = \hat \mu_{het}$ by concatenating orbit segments, after the resolution of all degeneracy via the sequence blow-up transformations described in Section \ref{ssec:results_blow-up}. The cycles $\bar \Gamma(s_1)$ and $\bar \Gamma(s_2)$ shown in red correspond to the PWS cycles $\Gamma(s_1)$ and $\Gamma(s_2)$ in Figure \ref{fig:pws_cycs}, respectively. In terms of the dynamics after blow-up, Theorem \ref{thm:bn3} describes the existence and  growth of limit cycles obtained as perturbations of singular cycles $\bar \Gamma(s)$ with $s>0$, i.e.~with orbit segments bounded away from the blow-ups spheres. Perturbations of singular cycles $\bar \Gamma^s$, $\bar \Gamma^l$, and the family of cycles bounded between (represented here by $\bar \Gamma^i$), are not described by Theorem \ref{thm:bn3}, \SJ{see Remark \ref{rem:connecting_cycs}}. It is \SJ{possible to show as in} \cite[Theorems 3.11 and E.1]{Jelbart2020d}, however, that these cycles mediate a connection to limit cycles on the (magenta) blow-up sphere. Transversal sections \SJ{$\Sigma_1$ and $\Sigma_2$} used in the proof of Theorem \ref{thm:bn3} are also shown.}
	\label{fig:bn_het_cycs}
\end{figure}


\begin{thm}
	\label{thm:bn3}
	Consider system \eqref{eq:normal_form}. Let 
	\begin{align*}
	\lambda:= \frac{2\sqrt{\Delta}}{\tau-\sqrt{\Delta}},
	\end{align*}	
	\KUK{and fix any $\nu\in (0,1)$. }
	Then for any $c>0$ sufficiently small, there exists an $\epsilon_0>0$ and an $s_0>0$ such that the following holds for each $\epsilon \in (0,\epsilon_0)$: There exists a parameterized family of stable limit cycles
	\begin{equation}
	s \mapsto \left(\mu(s,\epsilon), \Gamma(s,\epsilon)\right) , \qquad s \in (c,s_0) ,
	\label{hallo}
	\end{equation}
	which is continuous in $(s,\epsilon)$. In particular, 
	$\lim_{\epsilon\to 0} \Gamma(s,\epsilon) = \Gamma(s)$
	in Hausdorff distance, and \KUK{
		\[
		\mu(s,\epsilon) = \epsilon^{k/(1+k)}\hat \mu_{het}+o(\epsilon^{k/(1+k)}),
		\]
		being $C^1$ in $s\in (c,s_0)$ for each $\epsilon\in [0,\epsilon_0)$ with
		\begin{equation}
		\label{eq:growth_rate}
		\frac{\partial \mu}{\partial s}(s, \epsilon) = \mathcal O(\epsilon^{\nu k(1+\lambda)/(1+k)}).
		\end{equation}
	}	
	%
\end{thm}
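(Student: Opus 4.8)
The plan is to work entirely within the blown-up space constructed in Section~\ref{ssec:results_blow-up}, where the composed transformation \eqref{eq:comp_final} resolves all degeneracy and the PWS cycles $\Gamma(s)$ of \eqref{eq:PWS_cycs} lift to the nondegenerate singular cycles $\bar\Gamma(s)$ sketched in Figure~\ref{fig:bn_het_cycs}. The argument is a direct adaptation of the proof of \cite[Theorems 3.11 and E.1]{Jelbart2020d}: I would construct a Poincar\'e return map on the transversal sections $\Sigma_1, \Sigma_2$ and locate its fixed points by an implicit-function/contraction argument, treating $\mu$ as an unknown to be solved for alongside the cycle.

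First I would fix $\hat\mu = \hat\mu_{het}(\gamma)$ and assemble the singular cycle $\bar\Gamma(s)$ by concatenating the following legs: (a) the regular outer segment $\Gamma_{X^+}(s)$ of the field $X^+$; (b) entry onto the blow-up sphere $\{\nu=0\}$ at the attracting partially hyperbolic point $q_a$; (c) slow passage along the attracting center manifold $\mathcal J$; (d) the transversal heteroclinic $q_a \to q_w$ furnished by Lemma~\ref{lem:het}; (e) passage through a neighbourhood of the hyperbolic point $q_w$ along its stable manifold $\mathcal S$; and (f) return to $\Sigma_1$ along the slow/sliding segment $\Gamma_{sl}(s)$. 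Because the blow-up restores normal hyperbolicity of $\mathcal J$ (away from $q_a, q_w$) and hyperbolicity of $q_w$, each leg admits standard Fenichel- and exchange-lemma-type estimates for $0<\epsilon\ll1$, and the concatenation $\bar\Gamma(s)$ is a genuine nondegenerate singular orbit for every $s\in(c,s_0)$.

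The heart of the proof is the estimation of the return map $P_{s,\mu,\epsilon}\colon \Sigma_1 \to \Sigma_1$ obtained as the composition of the transition maps across legs (a)--(f). The strong contraction contributed by the slow/attracting leg (c) dominates the weak expansion accrued near $q_w$, which simultaneously yields uniform hyperbolicity of $P_{s,\mu,\epsilon}$ (hence stability of any fixed point) and, together with the \emph{regular} breaking of the heteroclinic connection as $\hat\mu$ varies (Lemma~\ref{lem:het}), invertibility of $\partial_\mu P$. For each fixed $s\in(c,s_0)$ the implicit function theorem then solves $P_{s,\mu,\epsilon}(p)=p$ uniquely for $(p,\mu)$, producing the continuous family \eqref{hallo} with $\lim_{\epsilon\to0}\Gamma(s,\epsilon)=\Gamma(s)$ in Hausdorff distance and the leading-order asymptotics $\mu(s,\epsilon)=\epsilon^{k/(1+k)}\hat\mu_{het}+o(\epsilon^{k/(1+k)})$, the latter being forced by the fact that the singular cycle closes precisely at $\hat\mu=\hat\mu_{het}$.

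The main obstacle --- and the origin of the polynomial growth rate \eqref{eq:growth_rate} --- is the passage map across a neighbourhood of $q_w$. Writing $\lambda_\pm = (\tau\pm\sqrt\Delta)/2$ for the node eigenvalues, one computes $1+\lambda = \lambda_+/\lambda_-$, so that $(1+\lambda)$ is precisely the relevant eigenvalue ratio at $q_w$; a $C^1$ linearisation (or Shilnikov-type estimate) at $q_w$ then shows that the exit point, and hence the closure condition, depends on $s$ only through a contraction factor scaling like a power of $\epsilon$ with exponent governed by $(1+\lambda)\,k/(1+k)$. Differentiating $P_{s,\mu,\epsilon}(p)=p$ in $s$ and isolating $\partial\mu/\partial s$ yields \eqref{eq:growth_rate}, where the free parameter $\nu\in(0,1)$ absorbs the unavoidable loss incurred in reaching the sharp contraction rate uniformly across the non-hyperbolic center directions, exactly as in the analogous estimates of \cite{Jelbart2020d, Kristiansen2019d}. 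The remaining technical burden is to make all passage estimates uniform in $s\in(c,s_0)$ and in $\epsilon\in(0,\epsilon_0)$, which is precisely where the restriction to $s$ bounded away from $0$ via $c>0$ is used.
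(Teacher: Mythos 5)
Your proposal is correct and follows essentially the same route as the paper: blow-up, the transversally broken heteroclinic of Lemma~\ref{lem:het}, a shooting/return-map argument between sections $\Sigma_1,\Sigma_2$ in which exponential contraction along the attracting slow manifolds dominates the algebraic behaviour near $q_w$, and the implicit function theorem applied to the resulting bifurcation equation (the paper phrases this as $F-B=0$ on $\Sigma_2$ rather than as a fixed point of a full return map, but the two are equivalent). The only real discrepancy is cosmetic: the paper obtains the exponent $k(1+\lambda)/(1+k)$ as the sum of $k/(1+k)$ coming from the scaling $\mu=\epsilon^{k/(1+k)}\hat\mu$ and $\nu k\lambda/(1+k)$ coming from the $C^1$-linearised backward passage past the fully hyperbolic point $q_w$ (Proposition~\ref{prop:FB} and Lemma~\ref{lem:coord_trans}), rather than attributing the entire exponent to the eigenvalue ratio at $q_w$ as you do.
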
 

A proof is given in Section \ref{sec:thm_connection_proof}. The limit cycles described in this theorem are $\mathcal O(1)$ with respect to $\epsilon$. Although it is straightforward to use our method to connect these cycles with $o(1)$ cycles (essentially taking $c=K \epsilon^{k/(1+k)}$ in (\ref{hallo}) with $K>0$ sufficiently large, \KUKK{see also Remark \ref{rem:connecting_cycs}}) that are obtained as perturbations of the heteroclinic cycle on the sphere $\{\nu=0\}$, we have decided to focus on the $\mathcal O(1)$ cycles since (i) the result is easier to state, and (ii) we have not been able to connect the cycles \WM{all the way down to the Hopf bifurcation anyways (colloquial)}, recall Theorem \ref{thm:bifs}. Such a connection requires global information of the limit cycles on the sphere, which we have not been able to obtain.

\begin{remark}
	\label{rem:connecting_cycs}
	Figure \ref{fig:bn_het_cycs} also indicates the existence of a family of nondegenerate singular cycles $\bar \Gamma^i$, bounded between `small' and `large' heteroclinic cycles $\bar \Gamma^s$ and $\bar \Gamma^l$ respectively. Such a construction is straightforward, and similar to the construction of singular cycles given in \cite[Section D.1]{Jelbart2020d}. By analogy to the arguments presented in \cite{Jelbart2020d} in the case of singularly perturbed BF$_3$ bifurcation, \SJ{it is possible to prove a} connection between limit cycles that are $\mathcal O(1)$ with respect to $\epsilon$ and limit cycles on the (second) blow-up sphere, facilitated by a family of limit cycles obtained \SJ{as perturbations of} the singular cycles $\bar \Gamma^i$.
\end{remark}

\section{Proof of the theorem \ref{thm:bifs} and proposition \ref{prop:BF}}
\label{sec:proofs}

In this section we prove Theorem \ref{thm:bifs} \SJ{and Proposition \ref{prop:BF}}. We begin with a proof of Theorem \ref{thm:bifs}.

\subsection{Proof of the Theorem \ref{thm:bifs}}
\label{ssec:proof_of_the_bifurcation_results}

We proceed by studying the dynamics of the relevant desingularized system from Lemma \ref{lem:desing_prob}. Theorems \ref{thm:bifs} will follow immediately after lifting system \eqref{eq:desing_prob} out of the invariant plane $\{\nu_1 = 0\}$ into $\{\nu_1 \in [0,\sigma)\}$ for sufficiently small $\sigma>0$ and applying the blow-down transformation given by the inverse to \eqref{eq:coord_change} defined on $\{\epsilon > 0\} = \{\nu_1 > 0, \rho_1>0\}$.

\

System \eqref{eq:desing_prob} has been studied in detail in \cite{Jelbart2020d} in the context of singularly perturbed BF$_i$, $i=1,2,3$ bifurcations, and we shall refer to this work for many of the computations. It is shown in this work that system \eqref{eq:desing_prob} has either 0, 1 or 2 equilibria in $\{\rho_1>0\}$ determined by solutions to the equation
\begin{equation}
\label{eq:desing_eq}
\varphi(\rho_1) = \gamma \beta - \hat \mu \rho_1^{k^2} + \delta \rho_1^{k(1+k)} =0 , \qquad \rho_1 > 0, \ \hat \mu \in \mathbb R .
\end{equation}
For an equilibrium $p_\ast = (x_{1,\ast},\rho_{1,\ast}) \in \{\rho_1 > 0\}$, the Jacobian has trace
\[
\tr J (p_\ast) = - k\beta + \tau \rho_{1,\ast}^{k(1+k)} ,
\]
and determinant
\[
\det J(p_\ast) = -\rho_{1,\ast}^{k(1+2k)} \left(k \hat \mu - \delta (1+k) \rho_{1,\ast}^k \right) .
\]
These expressions can be used to show the existence of saddle-node and Andronov-Hopf bifurcations along the parameterized curves defined by \eqref{eq:sn_desing} and \eqref{eq:ah_desing} respectively; see \cite[pp.41-42]{Jelbart2020d}. In particular, the Andronov-Hopf bifurcation is shown to have first Lyapunov coefficient
\[
l_1 = - \frac{\beta  k^3 (1+k)}{16 (\delta - \gamma  \tau )}
 \left(\frac{\beta  k}{\tau }\right)^{-2/k(1+k)} ((2+k) \delta -\gamma  \tau ) ,
\]
using \WM{the software package Mathematica; compare with} \cite[eqn.~8.35]{Kuehn2015}. 
This implies a supercritical bifurcation for all $k \in \mathbb N_+$, since by \eqref{eq:ah_desing} we have $\gamma < \delta / \tau$ with $\delta, \tau > 0$ and therefore
\[
(2+k) \delta - \gamma \tau > \delta - \gamma \tau = \tau \left(\frac{\delta}{\tau} - \gamma \right) > 0 \quad   \implies \quad l_1 < 0 .
\]

Saddle-node and Andronov-Hopf curves continuously extend to an intersection
\begin{equation}
\label{eq:bt_desing}
\left( \hat \mu_{bt}, \gamma_{bt} \right) = \left( \frac{(1+k) \delta}{k} \left(\frac{k \beta}{\tau}\right)^{1/(1+k)} , \frac{\delta}{\tau} \right) ,
\end{equation}
corresponding to Bogdanov-Takens bifurcation in system \eqref{eq:desing_prob}. \SJJ{In particular, if we let $X_1(x_1,\rho_1,\hat \mu, \gamma)$ represent the right-hand-side in \eqref{eq:desing_prob} then one can show regularity of the map
\[
\left((x_1,\rho_1) , (\hat \mu, \gamma) \right) \mapsto 
\left(X_1(x_1,\rho_1,\hat \mu, \gamma) , \tr J (x_1, \rho_1 , \hat \mu, \gamma) , \det J (x_1, \rho_1 , \hat \mu, \gamma) \right)
\]
at the Bogdanov-Takens point by a direct calculation. The additional nondegeneracy conditions
\[
a_{20}(0) + b_{11}(0) \neq 0 , \qquad b_{20}(0) \neq 0 ,
\]
on coefficients $a_{20}(0), b_{11}(0)$ and $b_{20}(0)$ defined in \cite[Theorem 8.4]{Kuznetsov2013} are shown using the expressions in the cited work to be satisfied with
\[
a_{20}(0) + b_{11}(0) = -\frac{\beta ^2 \delta  k^3 (k+1) \left(\frac{\beta  k}{\tau
	}\right)^{-\frac{1}{k^2+k}}}{2 \tau ^2} ,
\qquad
b_{20}(0) = \beta  k^2 (k+1) \left(\frac{\beta  k}{\tau }\right)^{-\frac{1}{k^2+k}} ,
\]
both of which are nonzero within the parameter regime of interest.}

\SJJ{Finally,} standard results in bifurcation theory imply the existence of a neighbourhood $I_{hom} \ni \gamma_{bt}$ and smooth function $\hat \mu_{hom}:I_{hom} \to \mathbb R$ such that
\begin{equation}
\label{eq:hom_conds}
\hat \mu_{hom}(\gamma_{bt}) = \hat \mu_{bt} , \qquad
\hat \mu_{hom}'(\gamma_{bt}) = \hat \mu_{sn}'(\gamma_{bt}) = \hat \mu_{ah}'(\gamma_{bt}) ,
\end{equation}
and $\hat \mu_{hom}''(\gamma_{bt})$, $\hat \mu_{sn}''(\gamma_{bt})$ and $\hat \mu_{ah}''(\gamma_{bt})$ are all distinct \cite{Kuznetsov2013}. 
The local parameterisation in \eqref{eq:hom_desing} follows from \eqref{eq:hom_conds} after Taylor expansion about $\gamma = \gamma_{bt}$. In order to see that \SJJ{saddle-homoclinic} bifurcation cannot occur for $\gamma < 0$, we first observe the following:
\begin{itemize}
	\item For $\gamma < 0$, system \eqref{eq:desing_prob} has a single equilibrium within $\{\rho_1 > 0\}$, and two equilibria $\{(-\beta,0),(0,0)\} \in \{\rho_1 = 0\}$;
	\item The subspace $\{\rho_1 = 0\}$ is invariant.
\end{itemize}
It follows that a homoclinic orbit cannot exist, since the connecting orbit cannot enclose an equilibrium.

\

Lifting the expressions derived above for $\nu_1 \in [0,\sigma)$ with $\sigma > 0$ sufficiently small 
and applying the blow-down transformation, in particular the relation
\[
\hat \mu = \mu \epsilon^{-k/(1+k)} ,
\]
we obtain the desired result. \qed

\begin{remark}
	In the preceding proof $\sigma$ must be sufficiently small so that \eqref{eq:sn_desing}, \eqref{eq:ah_desing}, \eqref{eq:hom_desing} and \eqref{eq:bt_desing} can be extended in $(x_1,\rho_1,\nu_1,\mu_1)-$space via suitable applications of the implicit function theorem. We omit this argument -- which is standard -- for the sake of brevity, but refer the reader to \cite[eqn.~(D7)]{Jelbart2020d} where the extended system is considered in detail.
\end{remark}

\subsection{Proof of Proposition \ref{prop:BF}}
\label{ssec:proof_of_prop_bf}

The result for the \SJ{outer} regime with $\mu>0$ and $\epsilon\rightarrow 0$ is standard, using the established correspondence between the Filippov system and the regularization \cite{Bonet2016,Buzzi2006,Kristiansen2019c,Kristiansen2019,Llibre2009,Llibre2007}, once we introduce the scalings defined by \SJJ{$x=\mu \widehat X$ and $y=\mu \widehat Y, \epsilon  = \mu E$}. Indeed, we just perform the cylindrical blowup \SJJ{$(\widehat X,\widehat Y,E)=(\widehat X,0,0)$} for the extended system \SJJ{$\{(\widehat X, \widehat Y)' = E X(\mu \widehat X,\mu \widehat Y,\mu, \mu E),E'=0\}$}. 

We therefore focus on the inner expansion in the dual limit case, setting $\epsilon = \mu^{(1+k)/k}\hat \epsilon$ and letting $\mu \rightarrow 0$. 
For this we consider \SJ{system} \eqref{eq:xyXYpmeqs}. 
The case of $k=1$ is easier, so we will also focus on this case, repeated here for convenience
\begin{equation}\label{eq:xyXYpeps1}
\begin{aligned}
X' &=(1+\tau X -\delta Y) Y-(\gamma-\tau)\beta \hat \epsilon ,\\
Y'  &= X Y +\beta \hat \epsilon,
\end{aligned}
\end{equation}
\KUK{\SJJ{where we have dropped the} hat \SJJ{notation on $X$ and $Y$}}. We leave the discussion of the general case $k\in \mathbb N$ to the end of the section.

The system \eqref{eq:xyXYpeps1} is for $0<\hat \epsilon\ll 1$ a slow-fast system in nonstandard form \cite{Jelbart2020a,Wechselberger2019}. Indeed, for $\hat \epsilon=0$ we obtain the layer problem
\begin{equation}\label{eq:xyXYpeps1lp}
\begin{aligned}
X' &=(1+\tau X -\delta Y) Y,\\
Y'  &= X Y
\end{aligned}
\end{equation}
for which $\{Y=0\}$ is a manifold of equilibria. Linearization of any point $(X,0)$ gives $X$ as the only nonzero eigenvalue. Consequently, $S_a:=\{(X,0)\,:\, X<0\}$ is normally hyperbolic and attracting, $(0,0)$ is fully nonhyperbolic, \SJJ{and} $S_r:=\{(X,0)\,:\,X>0\}$ is normally hyperbolic and repelling. Notice also \SJJ{that} for $Y>0$ we obtain \SJJ{the equivalent system}
\begin{equation}\label{eq:XYplus}
\begin{aligned}
X' &= 1+\tau X-\delta Y,\\
Y' &=X,
\end{aligned}
\end{equation}
upon dividing the right hand side \SJJ{of \eqref{eq:xyXYpeps1lp}} by $Y$. Let $\phi_t$ denote the flow of \eqref{eq:XYplus}. We then define $\Gamma$ as $\{\phi_t(0,0)\}_{t\in (0,t_d]}$ where $t_d>0$ is the first return time to $Y=0$. Notice that $\Gamma$ is well-defined since \eqref{eq:XYplus} is just the linearization of \SJJ{the vector field} $X^+$ having, in the BF case considered, an unstable focus at $(0,\delta^{-1})$. 
It is a simple calculation to show that $t_d>0$ is the first positive root of $R(t)$, recall \eqref{eq:Rt}, and that $\Gamma\cap \{Y=0\}=(X_d,0)$ with 
\begin{align}
\label{eq:drop}
X_d = -\frac{2e^{\tau t_2/2} }{\sqrt{- \Delta}} \sin \left(\frac{\sqrt{-\Delta}}{2} t_d \right).
\end{align}

Next, setting $Y=\hat \epsilon Y_2$ brings \eqref{eq:xyXYpeps1} into a slow-fast system in standard form. Upon passing to a slow time and then setting $\hat \epsilon=0$, \SJJ{we obtain the following} reduced problem on $S_a$:
\begin{align}
\dot X &= -\beta X^{-1} (1+\gamma X),\label{eq:reducedprop35}
\end{align}
\SJJ{which has} a repelling equilibrium at $X=-\gamma^{-1}$, seeing that $\gamma>0$. \SJJ{We note that} reduced problem can also be obtained from more general procedures described in \cite{Jelbart2020a,Wechselberger2019}. 

\begin{figure}[t!]
	\centering
	\includegraphics[scale=0.8]{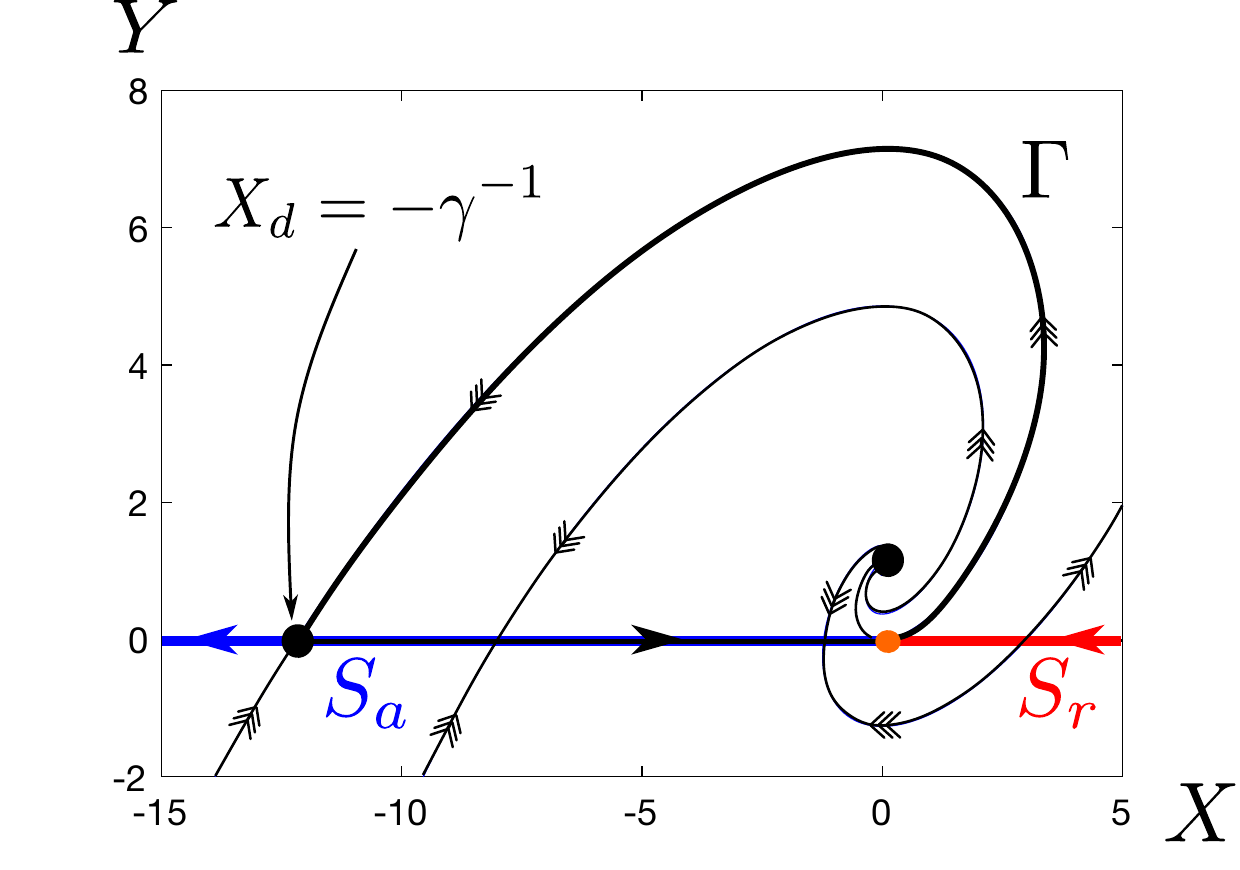}
	\caption{Singular limit dynamics for the nonstandard form slow-fast system \eqref{eq:xyXYpeps1} arising in case $k=1$. Attracting and repelling critical manifolds $S_a$ and $S_r$ are shown in blue and red respectively. The point $(0,0)$, shown in orange, is a regular fold point. There is an unstable focus at $(0,\delta^{-1})$, and an equilibrium $(-\gamma^{-1},0)$ which is repelling as an equilibrium for the reduced flow on $S_a$; both are indicated as black disks. We show the situation where $\gamma = \gamma_{hom,0} = -X_d^{-1}$ with $X_d$ given by \eqref{eq:drop}, for which there is a singular homoclinic orbit $\Gamma$ (shown here in black).}
	\label{fig:two_stroke}
\end{figure}

Combining our analysis of the layer problem and the reduced problem, we obtain Figure \ref{fig:two_stroke}. Specifically, for $\gamma=\gamma_{hom,0}:=-X_d^{-1}$ we have a singular \SJJ{saddle-homoclinic} connection. At the singular level $\hat \epsilon=0$, the connection is clearly transverse with respect to $\gamma$; in fact, $\Gamma$ is independent of $\gamma$ so this is obvious from $X_d=X_d(\gamma)$. For $0<\hat \epsilon\ll 1$ we then use Fenichel theory to perturb the saddle and the result of \cite{Krupa2001a} to track its unstable manifold near $\Gamma$. Defining a section $\Sigma$ transverse to $\Gamma$ within $Y>0$, we then obtain a bifurcation equation for the \SJJ{saddle-homoclinic} connection of the form $H(\gamma,\hat \epsilon)=0$, with $H$, \SJJ{which measures} the separation of the stable and unstable manifolds on $\Sigma$, being at least $C^1$ in $\gamma$, \SJJ{continuously dependent} on $\hat \epsilon\in [0,\hat \epsilon_0)$ \SJJ{and such that}
\begin{align*}
H(\gamma_{hom,0},0)=0,\,H'_{\gamma}(\gamma_{hom,0},0)\ne 0.
\end{align*}
The existence of $\gamma_{hom}^{inner}$ in Proposition \ref{prop:BF} follows \SJJ{after} applying the implicit function theorem to $H(\gamma,\hat \epsilon)=0$ at $(\gamma,\hat \epsilon) = (\gamma_{hom,0},0)$. For the final part of Proposition \ref{prop:BF}, we fix $\hat \epsilon$ small enough (i.e.~$\hat \mu$ large enough) and perturb in $\mu>0$ (or equivalently $\epsilon>0$, having fixed $\hat \epsilon$) small enough. 
\begin{remark}\label{rem:here}
	Notice for this last part that the $\mu$-perturbation of \eqref{eq:xyXYpeps1} will include terms of the form \[\phi_+(\epsilon y^{-1})=\phi_+(\mu^{1/k} \hat \epsilon Y^{-1}),\] using \eqref{eq:xyXYpm}, which \SJJ{are} ill-defined for $\mu=Y=0$. This is in the sense of which the charts \eqref{eq:xyXYpm} are more ill-suited for global computations. Here, however, fixing $\hat \epsilon>0$, where the saddle connection occurs within $Y>0$, we just require that the perturbation is continuous with respect to $\mu$ on this domain. To cover the \SJJ{saddle-homoclinic} case in a full neighbourhood of $(\epsilon,\mu)=(0,0)$, we have to work with our full blowup system, tracking the saddle across the first blowup sphere. We leave the details of this to future work.
\end{remark}

For $k\ge 2$, $\{Y=0\}$ is fully nonhyperbolic for $\hat \epsilon=0$. We then gain hyperbolicity by blowing up the points $(X,0,0)$ in the extended $(X,Y,\hat \epsilon)-$space \SJJ{via}
\begin{align*}
r\ge 0,\,(\bar Y,\bar E)\mapsto \begin{cases}
Y &= r \bar Y,\\
\hat \epsilon &= r\bar E,
\end{cases}
\end{align*}
\SJJ{followed by} a desingularization corresponding to division of the right hand side by $r^{k-1}$. 
Working in the directional chart corresponding to $\bar Y=1$ using chart-specified coordinates $(r_1,Y_1,E_1)$ defined by $Y=r_1,\,\hat \epsilon =r_1 E_1$, we find a normally hyperbolic and attracting critical manifold on $r_1=0$, carrying a reduced problem given by \eqref{eq:reducedprop35}. We therefore obtain the result as in the $k=1$ case, performing a separate blowup of $(X,r_1,E_1)=(0,0,0)$, replacing the result of \cite{Krupa2001b}, to track the slow manifold for $Y>0$ in this case. We leave out the details for simplicity. 

\section{Proof of theorem \ref{thm:bn3}}
\label{sec:thm_connection_proof}

We apply the blow-up procedure outlined in Section \ref{ssec:results_blow-up}. 
To describe the blow-up transformation \eqref{eq:comp_final} we focus on the following directional charts $\check \epsilon=1$ and $\check \rho=1$ with the chart-specified coordinates $\nu_1,\rho_1,x_1,\mu_1$ and $\nu_2,x_2,\epsilon_2,\mu_2$ defined by
\begin{align}
&\begin{cases}
x = \nu_1^{2k(1+k)} \rho_1^{k(1+k)} x_1 , \\
y= \nu_1^{2k(1+k)}  \rho_1^{2k(1+k)} , \\
\epsilon = \nu_1^{2(1+k)^2} \rho_1^{(2k+1)(1+k)}, \\
\mu = \nu_1^{2k(1+k)} \mu_1 ,
\end{cases}\label{eq:checkeps1}\\
&\begin{cases}
x = \nu_2^{2k(1+k)} x_2 , \\
y= \nu_2^{2k(1+k)}, \\
\epsilon = \nu_2^{2(1+k)^2} \epsilon_2  , \\
\mu = \nu_2^{2k(1+k)} \mu_2 ,
\end{cases}\label{eq:checkrho1}
\end{align}
respectively. We have the following smooth change of coordinates 
\begin{align}\label{eq:cc}
\nu_2 &=\nu_1 \rho_1,\,x_2 = x_1 \rho_1^{-k(1+k)},\,\mu_2 = \mu_1 \rho_1^{-2k(1+k)},\,\epsilon_2= \rho_1^{-(1+k)},
\end{align}
for $\rho_1>0$. In these charts, we obtain the desingularization by division of the right hand side by $\nu_1^{2(1+k)^2} \rho_1^{(1+k)^2}$ and $\nu_2^{2(1+k)^2}\epsilon_2$, respectively.

In the following lemma we present the desingularized equations in these respective charts. For this we first define $\widehat \theta_1$ and $\widehat \theta_2$ for $z>0$ and $q>0$ as follows:
\begin{align*}
\widehat \theta_1(u,v,w,z):= z^{-1} \theta_1(zu,zv,zw),\qquad
\widehat \theta_2(u,v,w,q,z) := z^{-1} q^{-1}\theta_2(zq u,zqv,zw),
\end{align*}
both having smooth extensions to $z=0$ and $q=0$, cf.~Theorem \ref{theorem:prop_normal_form}. Notice then that
\begin{align*}
\widehat \theta_1(u,v,w,0)=\widehat \theta_1(0,0,0,z) =\widehat \theta_2(u,v,w,q,0)=\theta_2(0,0,0,q,z)=0,
\end{align*}
for all $u,v,w,q,z$. 
\begin{lemma}\label{lem:desing_ext}
	The desingularized equations in the chart $\check \epsilon=1$ take the following form:
	\begin{equation}\label{eq:checkeps1eqs}
	\begin{aligned}
	x_1'&=f_1(x_1,\rho_1,\nu_1,\mu_1) +k x_1 g_1(x_1,\rho_1,\nu_1,\mu_1),\\
	\rho_1' &=\frac{1}{k}\rho_1 g_1(x_1,\rho_1,\nu_1,\mu_1),\\
	\nu_1' &=-\frac{2k+1}{2k(1+k)} \nu_1 g_1(x_1,\rho_1,\nu_1,\mu_1),\\
	\mu_1' &= (2k+1) \mu_1 g_1(x_1,\rho_1,\nu_1,\mu_1),
	\end{aligned}
	\end{equation}
	where
	\begin{align*}
	f_1(x_1,\rho_1,\nu_1,\mu_1) &=\left(\mu_1+\tau \rho_1^{k(1+k)} x_1-\delta \rho_1^{2k(1+k)}+ \widehat \theta_1(\rho_1^{k(1+k)}x_1,\rho_1^{2k(1+k)},\mu_2,\nu_1^{2k(1+k)})\right)\\
	&\times \left(1-\nu_1^{2k(1+k)} \rho_1^{k(1+k)} \phi_+(\nu_1^{2(1+k)} \rho_1^{1+k})\right)- \rho_1^{k(1+k)}\phi_+(\nu_1^{2(1+k)} \rho_1^{1+k})(\gamma-\tau),\\
	g_1(x_1,\rho_1,\nu_1,\mu_1) &=\left(x_1+\widehat \theta_2(x_1,\rho_1^{k(1+k)},\mu_1,\rho_1^{k(1+k)},\nu_1^{2k(1+k)})\right) \\
	&\times \left(1-\nu_1^{2k(1+k)} \rho_1^{k(1+k)}\phi_+(\nu_1^{2(1+k)} \rho_1^{1+k})\right) 
	+\phi_+(\nu_1^{2(1+k)} \rho_1^{1+k}).
	\end{align*}
	The quantity
	\begin{align}
	\hat \mu = \mu_1 \rho_1^{-k(2k+1)},\label{eq:scaling1}
	\end{align}
	is conserved for the flow of \eqref{eq:checkeps1eqs}.

	The desingularized equations in the chart $\check \rho=1$ take the following form:
	\begin{equation}\label{eq:checkrho1eqs}
	\begin{aligned}
	x_2'&=f_2(x_2,\epsilon_2,\nu_2,\mu_2) - x_2 g_2(x_2,\epsilon_2,\nu_2,\mu_2),\\
	\epsilon_2' &=-\frac{1+k}{k}\epsilon_2 g_2(x_2,\epsilon_2,\nu_2,\mu_2),\\
	\nu_2' &=-\frac{1}{2k(1+k)} \nu_2 g_2(x_2,\epsilon_2,\nu_2,\mu_2),\\
	\mu_2' &= -\mu_2 g_2(x_2,\epsilon_2,\nu_2,\mu_2),
	\end{aligned}
	\end{equation}
	where
	\begin{align}
	f_2(x_2,\epsilon_2,\nu_2,\mu_2) &=\left(\mu_2+\tau x_2-\delta + \widehat \theta_1(x_2,1,\mu_2,\nu_2^{2k(1+k)})\right)\nonumber\\
	&\times \left(1-\nu_2^{2k(1+k)} \epsilon_2^{k} \phi_+(\nu_2^{2(1+k)} \epsilon_2)\right)- \epsilon_2^k \phi_+(\nu_2^{2(1+k)} \epsilon_2)(\gamma-\tau),\nonumber\\
	g_2(x_2,\epsilon_2,\nu_2,\mu_2) &=\left(x_2+\widehat \theta_2(x_2,1,\mu_2,\nu_2^{2k(1+k)})\right)\left(1-\nu_2^{2k(1+k)} \epsilon_2^{k}\phi_+(\nu_2^{2(1+k)} \epsilon_2)\right)\nonumber\\
	&+\epsilon_2^k \phi_+(\nu_2^{2(1+k)} \epsilon_2).\label{eq:g2}
	\end{align}
	The quantity
	\begin{align}
	\hat \mu = \mu_2 \epsilon_1^{-k/(k+1)},\label{eq:scaling2}
	\end{align}
	is conserved for the flow of \eqref{eq:checkrho1eqs}.
	
\end{lemma}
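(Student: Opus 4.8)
The plan is to establish both \eqref{eq:checkeps1eqs} and \eqref{eq:checkrho1eqs} by a direct, if lengthy, computation: in each directional chart one substitutes the chart-specific coordinates into the doubly-extended system \eqref{eq:doubly_extended}, solves for the time-derivatives of the new coordinates, and divides through by the stated desingularizing factor. I would carry out the chart $\check\epsilon = 1$ in full and obtain the chart $\check\rho = 1$ either by an entirely analogous computation or by pushing \eqref{eq:checkeps1eqs} through the smooth transition map \eqref{eq:cc}. The single most important preliminary computation is the evaluation of $\phi(y\epsilon^{-1})$ in chart coordinates. Since the charts live in $y\ge 0$, the argument $s = y\epsilon^{-1}$ is nonnegative, so the first branch of \eqref{eq:reg_asymptotics} with $k = k_+$ applies. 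Using \eqref{eq:checkeps1} one finds $y\epsilon^{-1} = (\nu_1^{2(1+k)}\rho_1^{1+k})^{-1}$, whence $\phi(y\epsilon^{-1}) = 1 - \nu_1^{2k(1+k)}\rho_1^{k(1+k)}\phi_+(\nu_1^{2(1+k)}\rho_1^{1+k})$; the analogous identity in chart $\check\rho = 1$ reads $\phi(y\epsilon^{-1}) = 1 - \nu_2^{2k(1+k)}\epsilon_2^k\phi_+(\nu_2^{2(1+k)}\epsilon_2)$. These are exactly the bracketed factors appearing in $f_i,g_i$, and it is Assumption \ref{assumption:3} that makes the blow-up gain smoothness.

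Next I would exploit the invariance of $\epsilon$ and $\mu$. Writing $a = \dot\nu_1/\nu_1$ and $b = \dot\rho_1/\rho_1$, the conditions $\epsilon' = 0$ and $\mu' = 0$ yield one linear relation between $a$ and $b$ (from the $\epsilon$-weights) and express $\dot\mu_1$ through $a$ (from the $\mu$-weights); the equation $\dot y = \epsilon X_2$ then determines $b$, and $\dot x = \epsilon X_1$ determines $\dot x_1$. A short calculation gives $b = k^{-1}\nu_1^{2(1+k)}\rho_1^{1+k}X_2$ and $a = -\tfrac{2k+1}{2(1+k)}b$, after which every right-hand side carries a common factor equal to the desingularizing factor $\nu_1^{2(1+k)^2}\rho_1^{(1+k)^2}$. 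The two rescaled remainders $\widehat\theta_1,\widehat\theta_2$ enter precisely here: one checks from their definitions that $\theta_1(x,y,\mu) = \nu_1^{2k(1+k)}\widehat\theta_1(\cdots)$ and $\theta_2(x,y,\mu) = \nu_1^{2k(1+k)}\rho_1^{k(1+k)}\widehat\theta_2(\cdots)$ in these coordinates, so that $X_2/(\nu_1^{2k(1+k)}\rho_1^{k(1+k)}) = g_1$ and $X_1/\nu_1^{2k(1+k)} = f_1$ reduce to the claimed smooth expressions. Dividing $\dot x_1,\dot\rho_1,\dot\nu_1,\dot\mu_1$ by the desingularizing factor then produces \eqref{eq:checkeps1eqs} verbatim.

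The main obstacle, and the step deserving the most care, is the exponent bookkeeping that guarantees desingularization removes all negative powers of $\nu_1$ and $\rho_1$, i.e.\ that the pulled-back field is divisible by $\nu_1^{2(1+k)^2}\rho_1^{(1+k)^2}$. This is not automatic: it hinges on the specific weights in \eqref{eq:bu_cyl}, \eqref{eq:bu_Q} and \eqref{eq:bu_Qbfb}, on $k = k_+$ being the leading decay exponent of $\phi$, and on the order conditions $\theta_1 = \mathcal O(x^2,xy,y^2,x\mu,y\mu,\mu^2)$, $\theta_2 = \mathcal O(x^2,xy,y^2,x\mu,y\mu)$ from Theorem \ref{theorem:prop_normal_form}, which are exactly what force $\widehat\theta_1,\widehat\theta_2$ to extend smoothly to $z=0$ and $q=0$. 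I would verify these divisibility statements term-by-term once, noting that the $\phi_+$-terms each carry an extra factor $\nu_1^{2k(1+k)}\rho_1^{k(1+k)}$ and hence cause no difficulty. Since this is the same sequence of blow-ups used in \cite[Lemma 3.2, Remark 3.4]{Jelbart2020d}, I would invoke those computations wherever possible rather than repeating them, and treat the chart $\check\rho = 1$ by the transition map \eqref{eq:cc}, checking only that the divisor $\nu_2^{2(1+k)^2}\epsilon_2$ is the correct pushforward of $\nu_1^{2(1+k)^2}\rho_1^{(1+k)^2}$.

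Finally, conservation of $\hat\mu$ follows by logarithmic differentiation and requires no new work. In chart $\check\epsilon = 1$, \eqref{eq:checkeps1eqs} gives $\mu_1'/\mu_1 = (2k+1)g_1$ and $\rho_1'/\rho_1 = k^{-1}g_1$, so that $\tfrac{d}{d\tilde t}\log(\mu_1\rho_1^{-k(2k+1)}) = (2k+1)g_1 - k(2k+1)k^{-1}g_1 = 0$, establishing \eqref{eq:scaling1}; in chart $\check\rho = 1$, $\mu_2'/\mu_2 = -g_2$ and $\epsilon_2'/\epsilon_2 = -\tfrac{1+k}{k}g_2$ give $\tfrac{d}{d\tilde t}\log(\mu_2\epsilon_2^{-k/(1+k)}) = 0$, matching \eqref{eq:scaling2}. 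Alternatively, both conservation laws descend directly from the conserved quantity \eqref{eq:scaling} on the full blow-up sphere, read off in the respective charts via \eqref{eq:cc}.
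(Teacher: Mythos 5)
Your proposal is correct and follows essentially the same route as the paper's Appendix B: a direct substitution of the chart coordinates into the doubly extended system, evaluation of $\phi(y\epsilon^{-1})$ via Assumption \ref{assumption:3}, absorption of the remainders into $\widehat\theta_1,\widehat\theta_2$, and division by the stated desingularizing factor. The only (cosmetic) difference is organizational — the paper composes the three blow-up maps \eqref{eq:K1}, \eqref{eq:hatr1}, \eqref{eq:checkeps1f} sequentially with a desingularization at each stage, whereas you substitute the composed map at once and use the invariance of $\epsilon$ and $\mu$ to pin down $\dot\nu_1/\nu_1$ and $\dot\mu_1/\mu_1$; your exponent bookkeeping and the logarithmic-differentiation check of the conserved quantities are both correct.
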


\begin{proof}
	\SJJ{This follows by lengthy, but standard calculations. We defer the proof to Appendix \ref{app:bu_eqns} for expository reasons.}
\end{proof}

In the following, we analyze the two charts separately.
\subsection{The dynamics in $\check \epsilon=1$}
First, we notice that on the set defined by $\nu_1=0$, the system \eqref{eq:checkrho1eqs} becomes
\begin{equation}\label{eq:nu10}
\begin{aligned}
x_1' &= \mu_1+\tau \rho_1^{k(1+k)} x_1-\delta \rho_1^{2k(1+k)}- \rho_1^{k(1+k)}\beta(\gamma-\tau) + k x_1 \left(\beta + x_1 \right) , \\
\rho_1' &= \frac{1}{k} \rho_1 \left(\beta + x_1 \right),\\
\mu_1' &=(2k+1)\mu_1 \left(\beta + x_1 \right).
\end{aligned}
\end{equation}
using \SJJ{$\phi_+(0) = \beta$ and}
\begin{align*}
f_1(x_1,\rho_1,0,\mu_1) &=\mu_1+\tau \rho_1^{k(1+k)} x_1-\delta \rho_1^{2k(1+k)}- \rho_1^{k(1+k)}\beta(\gamma-\tau),\\
g_1(x_1,\rho_1,0,\mu_1) &=x_1+\beta.
\end{align*}
Since $\hat \mu = \mu_1 \rho_1^{-k(2k+1)}$ is conserved in this chart, recall \eqref{eq:scaling1}, we can eliminate $\mu_1$ from \eqref{eq:nu10} and in this way we obtain the $(x_1,\rho_1)$-system in 
\eqref{eq:desing_prob}.

On the other hand, within $\rho_1=\mu_1=0$ we have
\begin{equation}\label{eq:x1nu1system}
\begin{aligned}
x_1'&= kx_1(x_1+\beta),\\
\nu_1' &=-\frac{2k+1}{2k(1+k)}\nu_1 (x_1+\beta),
\end{aligned}
\end{equation}
Here we find the fully hyperbolic equilibrium $q_f$ with $x_1=\nu_1=0$. In particular, a simple calculations shows that \SJJ{within} $\nu_1=0$, $q_f$ is a source. 

On the other hand, for \eqref{eq:x1nu1system} we also find $x_1=-\beta$, $\nu_1\ge 0$ as the critical manifold $\mathcal W_0$, see Figure \ref{fig:bu2}, of partially hyperbolic points. Indeed, the linearization of any point on $\mathcal W_0$ has \SJJ{a} single nonzero eigenvalue \SJ{$-k\beta$}, also at the point $q_a$ with coordinates $(x_1,\rho_1,\nu_1,\mu_1)=(-\beta,0,0,0)\in \mathcal W_0$. At $q_a$, we therefore have a three-dimensional attracting center manifold. \SJJ{We} shall denote the $\nu_1=0$ subset of this manifold by $\mathcal J$, as indicated in Figure \ref{fig:bu2}. Using the parameter $\hat \mu$, we may foliate $\mathcal J$ into invariant subsets $\mathcal J_{\hat \mu}$. For simplicity, we denote the projection of $\mathcal J_{\hat \mu}$ onto the $(x_1,\rho_1)$-subspace by the same symbol. Then $\mathcal J_{\hat \mu}$ becomes an attracting center manifold of the point $(x_1,\rho_1)=(-\beta,0)$, which we for simplicity also denote by $q_a$, for the system \eqref{eq:desing_prob}. A simple calculation shows that it takes the following smooth graph form:
\begin{align}\label{eq:Jhatmugraph}
x_1 = -\beta +\frac{\gamma}{k}\rho_1^{k(1+k)}(1+\mathcal O(\rho_1)),
\end{align}
over $\rho_1\ge 0$ locally near $q_a$. This gives 
\begin{align*}
\rho_1' = \frac{\gamma}{k^2}\rho_1^{k(1+k)+1}(1+\mathcal O(\rho_1)),
\end{align*}
and $\rho_1>0$ is therefore locally increasing on $\mathcal J_{\hat \mu}$. In conclusion, we have the following.
\begin{lemma}
	Consider \eqref{eq:desing_prob}. Then $q_a$ is a nonhyperbolic saddle on $\{\rho_1\ge 0\}$ and the center manifold $\mathcal J_{\hat \mu}$ is unique on this set as the nonhyperbolic unstable manifold of $q_a$ for all $\hat \mu\in \mathbb R$.
\end{lemma}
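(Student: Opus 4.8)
The plan is to read off the saddle structure from the linearisation together with the invariant line $\{\rho_1=0\}$, and then to identify $\mathcal J_{\hat\mu}$ with the intrinsically defined (hence unique) unstable set of $q_a$. First I would confirm that $q_a=(x_1,\rho_1)=(-\beta,0)$ is an equilibrium of \eqref{eq:desing_prob} and compute the Jacobian there. Because every contribution carrying a factor $\rho_1^{k(1+k)}$, as well as its $\rho_1$-derivative, vanishes at $\rho_1=0$ for all $k\ge 1$, the Jacobian is diagonal, with $\partial_{x_1}x_1'=k\beta+2kx_1=-k\beta$ and $\partial_{\rho_1}\rho_1'=\tfrac1k(\beta+x_1)=0$, the off-diagonal entries being $\partial_{\rho_1}x_1'=\mathcal O(\rho_1^{k(1+k)-1})=0$ and $\partial_{x_1}\rho_1'=\tfrac1k\rho_1=0$. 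Thus $q_a$ carries the single nonzero eigenvalue $-k\beta<0$ (eigenvector along the $x_1$-axis) and a zero eigenvalue (eigenvector along the $\rho_1$-axis), confirming partial hyperbolicity.

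Next I would use the invariant line $\{\rho_1=0\}$, on which the flow reduces to $x_1'=kx_1(\beta+x_1)$, to identify the strong stable direction: near $x_1=-\beta$ the equilibrium is attracting within $\{\rho_1=0\}$, consistent with the eigenvalue $-k\beta$. Combined with the graph representation \eqref{eq:Jhatmugraph}, together with the computation preceding the statement showing that $\rho_1$ is strictly increasing along $\mathcal J_{\hat\mu}$ for all $\hat\mu\in\mathbb R$, this gives exactly the structure of a (nonhyperbolic) saddle on the invariant half-space $\{\rho_1\ge 0\}$: contraction along $\{\rho_1=0\}$ and expansion along $\mathcal J_{\hat\mu}$.

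To prove that $\mathcal J_{\hat\mu}$ is the unique such manifold, I would characterise it intrinsically as the unstable set $W^u(q_a):=\{\,p\in U:\ \text{the backward orbit through } p \text{ stays in } U \text{ and converges to } q_a\,\}$, where $U$ is a small neighbourhood of $q_a$ in $\{\rho_1\ge 0\}$; this set is manifestly independent of any choice, hence unique. The inclusion $\mathcal J_{\hat\mu}\cap U\subseteq W^u(q_a)$ is immediate, since along backward orbits on $\mathcal J_{\hat\mu}$ one has $\rho_1\searrow 0$ monotonically. For the reverse inclusion I would pass to the shifted variable $u=x_1+\beta$, in which the transverse equation reads $u'=-k\beta u+\mathcal O(\rho_1^{k(1+k)},u^2)$ while the tangential flow is $\rho_1'=\tfrac1k\rho_1 u=\mathcal O(\rho_1^{k(1+k)+1})$. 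The deviation $w=u-h(\rho_1)$ of an orbit from the graph $u=h(\rho_1)$ of $\mathcal J_{\hat\mu}$ then satisfies $w'=-k\beta w+(\text{higher order})$, so a standard cone-field/Gronwall estimate shows $|w|$ grows like $e^{k\beta|t|}$ in backward time, uniformly in $\hat\mu$; since the tangential motion is algebraically slow, any orbit with $\rho_1>0$ off $\mathcal J_{\hat\mu}$ is ejected from $U$ (either transversally once $|w|=\mathcal O(1)$, or through $\rho_1$ increasing once $u<0$) before $\rho_1$ can reach $0$. Hence $W^u(q_a)=\mathcal J_{\hat\mu}\cap U$, which proves uniqueness. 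Equivalently, one may observe that $\mathcal J_{\hat\mu}\cap\{\rho_1>0\}$ is normally hyperbolic and attracting with transverse rate $\approx k\beta$ and invoke Fenichel theory for uniqueness away from $q_a$, together with the uniqueness of the outward centre manifold at $q_a$.

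The main obstacle is this uniqueness statement, since centre manifolds are in general not unique. The crux is that the tangential flow on $\mathcal J_{\hat\mu}$ is directed \emph{outward} while the transverse direction is strongly contracting, with the contraction rate $-k\beta$ bounded away from zero. It is precisely this spectral-gap-versus-slow-flow dichotomy that forces any invariant manifold through $q_a$ tangent to the $\rho_1$-axis to coincide with the intrinsically defined unstable set; making the associated backward-time escape estimate precise and uniform in $\hat\mu$ is the only nontrivial technical point.
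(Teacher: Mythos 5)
Your proposal is correct and follows essentially the same route as the paper: the paper establishes the lemma via the preceding computations (the single nonzero eigenvalue $-k\beta$ at $q_a$, the graph form \eqref{eq:Jhatmugraph} of the center manifold, and the resulting monotone increase of $\rho_1$ along $\mathcal J_{\hat \mu}$), leaving the uniqueness to the standard fact that the outflowing part of a center manifold at a point with only stable and center directions coincides with the intrinsically defined set of orbits remaining near $q_a$ in backward time. Your explicit backward-time Gronwall/cone estimate is precisely the proof of that standard fact, so you have simply spelled out the step the paper leaves implicit.
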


Finally, we emphasize that on $\nu_1=0$ we also have the family of equilibria parameterized by \SJ{\eqref{eq:desing_eq} with $\rho_1\ge 0$.} 
This is the `boundary-node' $q_n$ in this chart, which we also parametrize using $\hat \mu$, writing $q_n(\hat \mu)$ \SJ{in Figure \ref{fig:bn_het_sphere}}. In particular, 
using \eqref{eq:desing_eq}, $q_n(\hat \mu)$ has coordinates $(x_1,\rho_1) = (-\beta,\rho_{1,n}(\hat \mu))$ with $\rho_{1,n}(\hat \mu)$ being given implicitly by
\begin{align}
\hat \mu = \rho_{1,n}(\hat \mu)^{-k^2} \beta \gamma+\rho_{1,n}(\hat \mu)^k \delta.\label{eq:hatmurho1}
\end{align}
Notice that \SJ{\eqref{eq:hatmurho1}} defines a unique $\rho_{1,n}(\hat \mu)>0$ for each $\hat \mu$ since $\gamma<0$. 

\

We will need the following result in our proof of Lemma \ref{lem:het}.
\begin{lemma}\label{lem:hatmuneg}
	There exists a $\hat \mu^-$ such that the $\omega$-limit set of $\mathcal J_{\hat \mu}$ is $q_n(\hat \mu)$ for all $\hat \mu\le \hat \mu^-$. 
\end{lemma}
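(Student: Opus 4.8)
The plan is to prove Lemma \ref{lem:hatmuneg} as a $\hat\mu \to -\infty$ statement by exposing a slow--fast structure hidden in system \eqref{eq:desing_prob}. The guiding observation is that, by \eqref{eq:hatmurho1} together with $\gamma<0$, the boundary-node $q_n(\hat\mu)$ sits at $\rho_1=\rho_{1,n}(\hat\mu)$ with $\rho_{1,n}(\hat\mu)\to 0$ as $\hat\mu\to-\infty$; the dominant balance $\hat\mu\sim \beta\gamma\,\rho_{1,n}^{-k^2}$ yields $\rho_{1,n}(\hat\mu)\sim(-\gamma\beta)^{1/k^2}(-\hat\mu)^{-1/k^2}$. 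Since both $q_a=(-\beta,0)$ and $q_n(\hat\mu)$ lie on $\{x_1=-\beta\}$ and the graph form \eqref{eq:Jhatmugraph} shows that $\mathcal J_{\hat\mu}$ leaves $q_a$ into $\{\rho_1>0\}$ with $x_1+\beta=O(\rho_1^{k(1+k)})$, the whole connection problem is confined to a shrinking neighbourhood of $q_a$. I would therefore set $\hat\mu=-M$ with $M>0$ large and introduce the rescaling
\begin{equation}
\rho_1 = M^{-1/k^2} u , \qquad x_1 = -\beta + M^{-(1+k)/k}\xi ,
\end{equation}
together with the small parameter $\varepsilon_M := M^{-(1+k)/k}$.

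Substituting into \eqref{eq:desing_prob} and collecting leading orders (using $\hat\mu\rho_1^{k^2}=-u^{k^2}$ by construction, $(\tau-\gamma)\beta+\tau x_1=-\gamma\beta+O(\varepsilon_M)$, and $-\delta\rho_1^{k(1+k)}=O(\varepsilon_M)$) produces a slow--fast system in standard form,
\begin{equation}
\begin{aligned}
\dot\xi &= u^{k(1+k)}\bigl(-\gamma\beta - u^{k^2}\bigr) - k\beta\,\xi + O(\varepsilon_M) , \\
\dot u &= \varepsilon_M\,\tfrac{1}{k}\,u\,\xi ,
\end{aligned}
\end{equation}
with $\xi$ fast and $u$ slow. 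The critical manifold $\mathcal C=\{\xi=\xi^*(u)\}$, where $\xi^*(u)=(k\beta)^{-1}u^{k(1+k)}(-\gamma\beta-u^{k^2})$, is uniformly normally hyperbolic and attracting since $\partial_\xi\dot\xi=-k\beta<0$, and the reduced flow on $\mathcal C$ is
\begin{equation}
\frac{du}{d\sigma}=\frac{1}{k^2\beta}\,u^{k(1+k)+1}\bigl(-\gamma\beta-u^{k^2}\bigr) ,
\end{equation}
which is strictly positive for $0<u<u_n$ and vanishes exactly at $u=0$ and $u=u_n:=(-\gamma\beta)^{1/k^2}$. These two slow equilibria correspond precisely to $q_a$ and $q_n(\hat\mu)$, consistent with the asymptotics above. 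A direct check shows $u_n$ is a hyperbolic sink and $u=0$ a source of the reduced flow, and that in the full rescaled system $q_n$ is a hyperbolic stable node (fast eigenvalue $-k\beta$, slow eigenvalue $O(\varepsilon_M)$, both real and negative, in agreement with the node classification of BN$_3$).

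With this structure the conclusion follows from Fenichel theory. For $\varepsilon_M$ small enough --- equivalently $\hat\mu\le\hat\mu^-$ for a suitable threshold $\hat\mu^-$ --- there is an attracting slow manifold $\mathcal C_{\varepsilon_M}$, $O(\varepsilon_M)$-close to $\mathcal C$ and containing $q_a$, that remains normally hyperbolic over the whole segment $u\in[0,u_n+\sigma_0]$. Because $q_a$ is a slow-flow equilibrium from which the slow flow is strictly outgoing, the unique nonhyperbolic unstable manifold $\mathcal J_{\hat\mu}$ (recall it was characterised above as the unique such manifold of $q_a$) coincides with the branch $\mathcal C_{\varepsilon_M}\cap\{u>0\}$ and is transported monotonically in $u$ into the basin of the stable node $q_n$. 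Hence $\omega(\mathcal J_{\hat\mu})=\{q_n(\hat\mu)\}$, as claimed. As an independent consistency check one notes directly from \eqref{eq:desing_prob} that on $\{x_1=-\beta\}$ one has $\rho_1'=0$ and $x_1'=-\rho_1^{k(1+k)}\varphi(\rho_1)$ with $\varphi$ as in \eqref{eq:desing_eq}; for $\gamma<0$, $\varphi$ is strictly increasing with unique positive zero $\rho_{1,n}$, so the flow crosses the segment between $q_a$ and $q_n$ strictly toward increasing $x_1$, which both fixes the orientation of $\mathcal J_{\hat\mu}$ and supplies the outer boundary of a trapping region around $q_n$.

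The hard part will be that $q_a$ remains nonhyperbolic \emph{even after} the rescaling: its linearisation in $(\xi,u)$ is again $\mathrm{diag}(-k\beta,0)$, so the standard Fenichel machinery cannot be invoked at $q_a$ itself. The argument must instead lean on the already-established uniqueness of $\mathcal J_{\hat\mu}$ as the nonhyperbolic unstable manifold of $q_a$, identifying it with the outgoing branch of $\mathcal C_{\varepsilon_M}$, and must verify that normal hyperbolicity of $\mathcal C$ does not degenerate anywhere on the traversed segment $[0,u_n]$ (it does not, since $\partial_\xi\dot\xi\equiv -k\beta$), so that the slow manifold can be continued all the way into the basin of $q_n$ with no intervening fold or loss of hyperbolicity.
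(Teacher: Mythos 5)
Your argument is correct, but it takes a genuinely different --- and far more explicit --- route than the paper, whose entire proof is three lines: it invokes center manifold theory together with the facts that $q_n(\hat\mu)$ is a stable node for $\hat\mu\ll-1$ and that $q_n(\hat\mu)\to q_a$ as $\hat\mu\to-\infty$. The implicit mechanism there is soft: once $q_n(\hat\mu)$ enters the neighbourhood of $q_a$ on which the local center manifold is defined, it must (being an equilibrium, hence an orbit that stays near $q_a$ for all time) lie \emph{on} that one-dimensional center manifold, i.e.\ on $\mathcal J_{\hat \mu}$ itself; since there are no other equilibria between $q_a$ and $q_n(\hat\mu)$ and $\rho_1$ is locally increasing on $\mathcal J_{\hat\mu}$, the center flow runs from $q_a$ into the attracting node. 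Your rescaling $\rho_1=M^{-1/k^2}u$, $x_1=-\beta+M^{-(1+k)/k}\xi$ makes this quantitative: the exponents and the reduced flow $u'=\tfrac{1}{k^2\beta}u^{k(1+k)+1}\bigl(-\gamma\beta-u^{k^2}\bigr)$ check out, and the approach yields for free the asymptotics $\rho_{1,n}\sim(-\gamma\beta)^{1/k^2}(-\hat\mu)^{-1/k^2}$, the eigenvalue structure of $q_n$, and an explicit meaning for the threshold $\hat\mu^-$ (the validity range of Fenichel theory), none of which the paper's argument provides. The price is exactly the soft spot you flag yourself: $q_a$ remains nonhyperbolic after rescaling, so Fenichel theory alone does not attach $\mathcal J_{\hat\mu}$ to the slow manifold. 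Your fix is viable, though I would phrase it as tracking rather than coincidence: $\mathcal J_{\hat\mu}$ is a single trajectory entering $\{u>0\}$ tangent to the uniformly attracting critical manifold, so it is exponentially attracted to any chosen slow manifold and shadows the monotone reduced flow into the basin of the hyperbolic sink $q_n$; alternatively, the slow manifold over $[0,u_n+\sigma_0]$ may be chosen to contain the equilibrium $q_a$, and uniqueness of the nonhyperbolic unstable manifold then identifies its $\{u>0\}$ branch with $\mathcal J_{\hat\mu}$. One cosmetic remark: your leading-order graph $\xi^*(u)\approx-\tfrac{\gamma}{k}u^{k(1+k)}$ carries the correct sign; the sign printed in \eqref{eq:Jhatmugraph} appears to be a typo, since with $\gamma<0$ it would make $\rho_1$ locally decreasing on $\mathcal J_{\hat\mu}$, contradicting the sentence that follows it in the paper.
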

\begin{proof}
	The result follows from the center manifold theory, the fact that $q_n(\hat \mu)$ is a stable node for $\hat \mu\ll -1$ and finally that $q_n(\hat \mu)\rightarrow q_a$ for $\hat \mu\rightarrow -\infty$. 
\end{proof}

\subsection{The dynamics in $\check \rho=1$}
We consider \eqref{eq:checkrho1eqs}. Within the invariant set defined by $\nu_2=0$ we have
\begin{align*}
x_2' &= \mu_2+\tau x_2-\delta-\epsilon_2^k\beta (\gamma-\tau) -x_2(x_2+\epsilon_2^k \beta ),\\
\epsilon_2' &= -\frac{1+k}{k} \epsilon_2 (x_2+\epsilon_2^k \beta),\\
 \mu_2' &=-\mu_2 (x_2+\epsilon_2^k \beta),
\end{align*}
using that 
\begin{align*}
f_2(x_2,\epsilon_2,0,\mu_2) &= \mu_2+\tau x_2-\delta-\epsilon_2^k \beta (\gamma-\tau),\\
g_2(x_2,\epsilon_2,0,\mu_2) &=x_2+\epsilon_2^k \beta.
\end{align*}
Specifically, within the invariant set defined by $\epsilon_2=\nu_2=\mu_2 = 0$ we have
\begin{align*}
x_2' &=\tau x_2-\delta - x_2^2,
\end{align*}
producing the two equilibria $q_w$ and $q_o$ with 
\begin{align}\label{eq:qw}
x_{2} = x_{2,w}:=\frac12 \tau - \frac12 \sqrt{\Delta},\quad x_{2} = x_{2,o}:=\frac12 \tau + \frac12 \sqrt{\Delta} ,
\end{align}
respectively. Recall that $\Delta=\tau^2-4\delta>0$. Both points are fully hyperbolic for \eqref{eq:checkrho1eqs}, but within $\nu_2=0$ the point $q_o$, which corresponds to the strong eigendirection, is an attracting node, whereas $q_w$ is a saddle, having a one-dimensional unstable manifold along $\epsilon_2=\mu_2=0$ and a two-dimensional stable manifold $\mathcal S:=W^s(q_w)$. Using the conservation of $\hat \mu = \mu_2\epsilon_2^{-k/(1+k)}$, we foliate $\mathcal S$ into invariant subsets $\mathcal S_{\hat \mu}$ for $\hat \mu\in \mathbb R$ and $\mathcal S_\infty$, corresponding to $\hat \mu\rightarrow \infty$ contained within $\epsilon_2=0$ where
\begin{equation}\label{eq:Sinfeqs}
\begin{aligned}
x_2' &= \mu_2+\tau x_2-\delta -x_2^2,\\
\mu_2' &=-\mu_2 x_2,
\end{aligned}
\end{equation}
\SJ{and $\mathcal S_\infty$ is a stable manifold of $(x_2,\mu_2)=(x_{2,w},0)$.} Here we find $q_{n,\infty}$, corresponding to $q_n(\hat \mu)$ \SJ{when} $\hat \mu\rightarrow \infty$, as $(x_2,\mu_2)=(0,\delta)$, which is a hyperbolic and unstable node. In fact, we have the following.
\begin{lemma}\label{lem:equivalenceXp}
	The system \eqref{eq:Sinfeqs} on $\{\mu_2>0\}$ is smoothly topologically equivalent with 
	\begin{equation}\label{eq:Xpluslin}
	\begin{aligned}
	x' & =\tau x - \delta y, \\
	y' &=x.
	\end{aligned}
	\end{equation}
	on $\{y>-\delta^{-1}\}$.  
\end{lemma}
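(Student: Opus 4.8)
The plan is to recognize system \eqref{eq:Sinfeqs} as the blow-down (projectivization) of a homogeneous linear vector field, which both suggests the correct transformation and explains why it is a conjugacy. Concretely, append the trivial equation $\dot\mu=0$ to the planar affine system $\dot x=\mu+\tau x-\delta y$, $\dot y=x$ (the linear part of $X^+$ together with the $\mu$-term), obtaining a vector field on $(x,y,\mu)$-space which is homogeneous of degree one, hence equivariant under the scaling $(x,y,\mu)\mapsto(sx,sy,s\mu)$. Introducing the chart coordinates $x_2=x/y$, $\mu_2=\mu/y$ exactly as in \eqref{eq:checkrho1}, a direct computation (the $y$-factors cancel because $\dot y/y=x_2$) reproduces \eqref{eq:Sinfeqs}. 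Since $\dot\mu=0$, each plane $\{\mu=\mu_0\}$ with $\mu_0>0$ is invariant and, by the scaling equivariance, is a global cross-section to the projectivization on $\{\mu>0\}$; restricting the flow to such a plane and translating its equilibrium to the origin yields exactly \eqref{eq:Xpluslin}.

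First I would fix the section $\{\mu=1\}$ and define the candidate map
\[
\Psi:\{\mu_2>0\}\to\mathbb R^2,\qquad \Psi(x_2,\mu_2)=(x,y):=\left(\frac{x_2}{\mu_2},\ \frac{1}{\mu_2}-\frac1\delta\right),
\]
whose inverse is $x_2=\delta x/(\delta y+1)$, $\mu_2=\delta/(\delta y+1)$. Using that $\delta>0$ in the BN case (recall $\delta>0$ and $\Delta=\tau^2-4\delta>0$), the second component $y=\mu_2^{-1}-\delta^{-1}$ is a smooth, strictly decreasing bijection from $(0,\infty)$ onto $(-\delta^{-1},\infty)$, so $\Psi$ is a diffeomorphism of $\{\mu_2>0\}$ onto $\{y>-\delta^{-1}\}$ with smooth inverse there. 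This pins down the stated domain.

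It then remains to verify that $\Psi$ carries orbits of \eqref{eq:Sinfeqs} to orbits of \eqref{eq:Xpluslin}. Differentiating along \eqref{eq:Sinfeqs} and using $\dot\mu_2=-\mu_2 x_2$ gives
\[
\dot x=\frac{\dot x_2\,\mu_2-x_2\,\dot\mu_2}{\mu_2^2}=\frac{(\mu_2+\tau x_2-\delta-x_2^2)\mu_2+\mu_2 x_2^2}{\mu_2^2}=\frac{\mu_2+\tau x_2-\delta}{\mu_2},
\]
and substituting $x_2=\mu_2 x$ together with $\mu_2^{-1}=y+\delta^{-1}$ collapses this to $\dot x=\tau x-\delta y$; similarly $\dot y=-\dot\mu_2/\mu_2^2=x_2/\mu_2=x$. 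Hence $\Psi$ is a smooth conjugacy, with no time reparametrization required, which in particular establishes the asserted smooth topological equivalence.

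The only genuine content is identifying the shift $-\delta^{-1}$, which is forced by the requirement that the equilibrium $q_{n,\infty}$ at $(x_2,\mu_2)=(0,\delta)$ of \eqref{eq:Sinfeqs} be mapped to the origin of \eqref{eq:Xpluslin}; the $x_2^2$-cancellation in the computation above reflects exactly the homogeneity underlying the projectivization. Once the map is guessed, the verification is a one-line calculation and there is no substantial obstacle — the cleanest framing is simply to present $\Psi$ and check the two identities, though one may equivalently argue via the invariant cross-section of the scaling-equivariant $3$-dimensional flow.
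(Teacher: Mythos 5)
Your proposal is correct and uses exactly the same change of variables as the paper: your $\Psi$ is precisely the inverse of the paper's diffeomorphism $x_2=(\delta^{-1}+y)^{-1}x$, $\mu_2=(\delta^{-1}+y)^{-1}$, and your verification (which the paper leaves as "a simple calculation") checks out, including the identification of the domain $\{y>-\delta^{-1}\}$ via $\delta>0$. The projectivization framing is a nice way to motivate the map, but the substance of the argument is the same.
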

\begin{proof}
	A simple calculation shows that the diffeomorphism
	\begin{align*}
	(x,y)\mapsto \begin{cases}
	x_2 = (\delta^{-1}+y)^{-1} x,\\
	\mu_2 =(\delta^{-1}+y)^{-1} ,
	\end{cases}
	\end{align*}
	$\{y>-\delta^{-1}\}$,
	brings \eqref{eq:Xpluslin} into \eqref{eq:Sinfeqs}, which completes the proof. 
\end{proof}

As a corollary, the $\alpha$-limit set of $\mathcal S_{\infty}$ is $q_{n,\infty}$. But then by regular perturbation theory, \KUK{and the hyperbolicity of $q_{n,\infty}$,} we obtain the following result, which we also need in our proof of Lemma \ref{lem:het}.
\begin{corollary}\label{cor:hatmupl}
	There exists a $\hat \mu^+>0$ large enough such that the $\alpha$-limit set of $\mathcal S_{\hat \mu}$ is $q_n(\hat \mu)$ for all \SJJ{$\hat \mu \geq\hat \mu^+$}. 
\end{corollary}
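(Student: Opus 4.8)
The plan is to realise the passage $\hat\mu \to \infty$ as a genuine \emph{regular} perturbation on each invariant leaf $\{\hat\mu = \mathrm{const}\}$ of the system \eqref{eq:checkrho1eqs} restricted to $\{\nu_2 = 0\}$, and then to track the one-dimensional stable manifold $\mathcal S_{\hat\mu}$ of $q_w$ backward into a trapping neighbourhood of the source $q_{n,\infty}$. Concretely, on the leaf with parameter $\hat\mu$ I would use $(x_2,\mu_2)$ as coordinates and eliminate $\epsilon_2$ via the conserved quantity \eqref{eq:scaling2}, which gives $\epsilon_2^{k} = \sigma\,\mu_2^{1+k}$ with $\sigma := \hat\mu^{-(1+k)}$. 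Since at $\nu_2 = 0$ one has $\phi_+(\nu_2^{2(1+k)}\epsilon_2) = \phi_+(0) = \beta$, the leaf-restricted vector field becomes
\begin{align*}
x_2' &= \mu_2 + \tau x_2 - \delta - \sigma\beta\mu_2^{1+k}(\gamma-\tau) - x_2\left(x_2 + \sigma\beta\mu_2^{1+k}\right), \\
\mu_2' &= -\mu_2\left(x_2 + \sigma\beta\mu_2^{1+k}\right),
\end{align*}
which closes up on the leaf (by invariance), depends polynomially --- hence smoothly --- on $\sigma$, and reduces exactly to \eqref{eq:Sinfeqs} at $\sigma = 0$. Thus $\hat\mu \to \infty$ corresponds to the regular limit $\sigma \to 0^+$.

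At $\sigma = 0$ the two relevant objects are already in place: the line $\{\mu_2 = 0\}$ is invariant and carries the saddle $q_w = (x_{2,w},0)$, whose $\mu_2$-eigenvalue equals $-x_{2,w} < 0$ in the BN$_3$ regime (recall $0 < x_{2,w} = (\tau-\sqrt\Delta)/2$ from \eqref{eq:qw}), so its stable direction points into $\{\mu_2 > 0\}$ and its stable manifold there is $\mathcal S_\infty$; and $q_{n,\infty} = (0,\delta)$ is a hyperbolic unstable node, its source character and the relation $\alpha(\mathcal S_\infty) = q_{n,\infty}$ being read off directly from Lemma \ref{lem:equivalenceXp}. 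I would then invoke standard regular-perturbation facts: (i) $q_w$ persists as a saddle for all $\sigma$, since $\{\mu_2 = 0\}$ stays invariant and the $\mu_2$-eigenvalue remains $-x_{2,w}$, and its local stable manifold depends $C^1$-smoothly on $\sigma$; (ii) by hyperbolicity of $q_{n,\infty}$, for all small $\sigma$ there is a unique nearby unstable node $q_n(\hat\mu)$ (the continuation of $q_{n,\infty}$, consistent with \eqref{eq:hatmurho1}), together with a $\sigma$-uniform neighbourhood $U$ of $q_{n,\infty}$ every backward orbit of which converges to $q_n(\hat\mu)$.

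The conclusion then follows by backward tracking of the connection. Since $\alpha(\mathcal S_\infty) = q_{n,\infty}$, I can choose $a = \phi^0_{-T}(b) \in \mathcal S_\infty \cap U$, where $b$ lies in the local stable manifold of $q_w$ and $T>0$ is large; continuous dependence of the local stable manifold and of the flow over the compact interval $[-T,0]$ then yields, for small $\sigma$, a point $a_\sigma \in \mathcal S_{\hat\mu} \cap U$ close to $a$, and the trapping property of $U$ forces its backward orbit to converge to $q_n(\hat\mu)$. As $\mathcal S_{\hat\mu}$ is a single orbit, this gives $\alpha(\mathcal S_{\hat\mu}) = q_n(\hat\mu)$ for all $\hat\mu \geq \hat\mu^+ := \sigma_0^{-1/(1+k)}$, where $\sigma_0$ is the admissible upper bound on $\sigma$. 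The only genuinely delicate point I expect is isolating the correct regularising parameter $\sigma = \hat\mu^{-(1+k)}$ and verifying the smooth extension of the family to the boundary leaf $\sigma=0$; once that is established, the remainder is the routine persistence of a source-to-saddle connection under a regular perturbation, and the statement plays the role (at the $+\infty$ end) dual to Lemma \ref{lem:hatmuneg} in the shooting argument for Lemma \ref{lem:het}.
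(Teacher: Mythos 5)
Your argument is correct and is essentially the paper's proof: the paper likewise deduces $\alpha(\mathcal S_\infty)=q_{n,\infty}$ from Lemma \ref{lem:equivalenceXp} and then invokes regular perturbation theory together with the hyperbolicity of $q_{n,\infty}$ to transfer this to $\mathcal S_{\hat\mu}$ for $\hat\mu$ large. Your explicit identification of $\sigma=\hat\mu^{-(1+k)}$ as the regularising parameter on the invariant leaves simply makes precise the perturbation step the paper leaves implicit.
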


For $\hat \mu \in \mathbb R$, we project $\mathcal S_{\hat \mu}$ onto the $(x_2,\epsilon_2)-$space and denote the projection by the same symbol. A simple calculation shows that it takes the following local form:
\begin{align}
x_2=x_{2,w}-\frac{2}{\tau+\sqrt{\Delta}} \hat \mu \epsilon_{2}^{k/(1+k)} + \mathcal O(\epsilon_2),\label{eq:Shatmu}
\end{align}
for $\epsilon_2>0$ small enough. 

\subsection{Proof of Lemma \ref{lem:het}}
To prove Lemma \ref{lem:het}, we \SJJ{combine our analyses in charts $\check \epsilon = 1$ and $\check \rho = 1$ in order to show the} 
existence of a unique $\hat \mu_{het}$ such that $\mathcal J_{\hat \mu_{het}}$ intersects $\mathcal S_{\hat \mu_{het}}$, transversally with respect to $\hat \mu$.

Before we prove the existence of \SJJ{$\hat \mu_{het}$}, we first show 
that any heteroclinic $\gamma_{het}(t)=(x_{1,het}(t),\rho_{1,het}(t))$ must be monotonically increasing in $\rho_1$, \KUK{i.e.~}$\rho_{1,het}'(t)>0$ for all $t\in \mathbb R$. By the local analysis near $q_a$ and $q_w$, this is true for locally (i.e. for $t\rightarrow \pm \infty$). Moreover, \SJJ{using} \eqref{eq:Shatmu} \SJJ{and} the change of coordinates in \eqref{eq:cc} it follows that $x_{1,het}'(t)>0$ for $t\gg 1$.  Subsequently, recall that $q_n(\hat \mu)$ with coordinates $(x_1,\rho_1)=(-\beta,\rho_{1,n}(\hat \mu))$ is the unique equilibrium for $\rho_1>0$. Then since the $\rho_1$-nullcline is $x_1=-\beta$, it follows that $\dot x_1\gtrless 0$ on $x_1=-\beta$ for $\rho_1\lessgtr \rho_{1,n}(\hat \mu)$. Consequently, if there is a largest $t_1$ such that $\rho_{1,het}'(t_1)=0$, then $\{\gamma_{het}(t)\}_{t\ge t_1}$ and $x_1=-\beta$ together enclose a region to the left which is backward invariant, contradicting the definition of $\gamma_{het}$. We conclude that any heteroclinic $\gamma_{het}$ is monotone in $\rho_1$. 


Next, for the existence of $\hat \mu_{het}$, we use a monotonicity argument  \KUK{as in \cite[App. A]{Kristiansen2019d}}. Specifically, by Lemma \ref{lem:hatmuneg} and Corollary \ref{cor:hatmupl}
there can be no heteroclinics for $\hat \mu\le \hat \mu^-$ or $\hat \mu\ge \hat \mu^+$.
\begin{lemma}\label{lem:collect}
	Consider any $\hat \mu\le \hat \mu^-$. Then:
	\begin{itemize}
		\item The $\omega$-limit set of $\mathcal J_{\hat \mu}$ is $q_n(\hat \mu)$.
		\item The $\alpha$-limit set of $\mathcal S_{\hat \mu}$ is $q_f$.
	\end{itemize}
	Consider any $\hat \mu\ge \hat \mu^+$. Then:
	\begin{itemize}
		\item The $\omega$-limit set of $\mathcal J_{\hat \mu}$ is $q_o$.
		\item The $\alpha$-limit set of $\mathcal S_{\hat \mu}$ is $q_n(\hat \mu)$.
	\end{itemize}
\end{lemma}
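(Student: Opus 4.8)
The plan is to observe first that two of the four assertions are already established: for $\hat\mu \le \hat\mu^-$ the statement about the $\omega$-limit of $\mathcal J_{\hat\mu}$ is precisely Lemma \ref{lem:hatmuneg}, and for $\hat\mu \ge \hat\mu^+$ the statement about the $\alpha$-limit of $\mathcal S_{\hat\mu}$ is precisely Corollary \ref{cor:hatmupl}. Hence only two claims remain: that $\alpha(\mathcal S_{\hat\mu}) = q_f$ for $\hat\mu \le \hat\mu^-$, and that $\omega(\mathcal J_{\hat\mu}) = q_o$ for $\hat\mu \ge \hat\mu^+$. I would carry out the argument on the two-dimensional invariant leaf $\{\hat\mu = \mathrm{const}\} \cap \{\nu = 0\}$, covered by the two charts $\check\epsilon = 1$ and $\check\rho = 1$ glued along $\rho_1, \epsilon_2 > 0$ via \eqref{eq:cc}. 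On this leaf the equilibria are the interior saddles $q_a$ and $q_w$, the interior node $q_n(\hat\mu)$, and the two poles shared by all leaves: the source $q_f$ (visible in $\check\epsilon = 1$) and the attracting node $q_o$ (visible in $\check\rho = 1$); since the leaf is two-dimensional, Poincaré--Bendixson theory applies.

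First I would fix the type of $q_n(\hat\mu)$ in each regime: for $\hat\mu \le \hat\mu^-$ it is a stable node, as already used in the proof of Lemma \ref{lem:hatmuneg}, so $q_f$ is the only repeller on the leaf; for $\hat\mu \ge \hat\mu^+$ one has $q_n(\hat\mu) \to q_{n,\infty}$, which is a hyperbolic unstable node (Lemma \ref{lem:equivalenceXp}), so $q_n(\hat\mu)$ is a source and $q_o$ is the only attractor. With the equilibrium types fixed, the two remaining claims follow from a planar phase-portrait dichotomy. For $\hat\mu \ge \hat\mu^+$, the forward orbit $\mathcal J_{\hat\mu}$ (the unstable manifold of $q_a$) has an $\omega$-limit that is an attractor, a periodic orbit, or a graphic; it cannot return to $q_a$, the sources $q_f, q_n(\hat\mu)$ are excluded, and a connection to $q_w$ is impossible because it would force $\mathcal J_{\hat\mu} = \mathcal S_{\hat\mu}$, contradicting Corollary \ref{cor:hatmupl} (which gives $\alpha(\mathcal S_{\hat\mu}) = q_n \ne q_a = \alpha(\mathcal J_{\hat\mu})$); hence, once periodic orbits are ruled out, $\omega(\mathcal J_{\hat\mu}) = q_o$. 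The case $\hat\mu \le \hat\mu^-$ is the time-reversed mirror image: $\alpha(\mathcal S_{\hat\mu})$ cannot be $q_a$ (else $\mathcal S_{\hat\mu} = \mathcal J_{\hat\mu}$, which runs to $q_n$ by Lemma \ref{lem:hatmuneg}), the sinks $q_o, q_n(\hat\mu)$ are excluded, and so $\alpha(\mathcal S_{\hat\mu}) = q_f$ after excluding periodic orbits.

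The main obstacle is therefore the exclusion of periodic orbits (and polycycles) on the leaf for the extreme values of $\hat\mu$. Here I would use the nullcline structure recorded in the proof of Lemma \ref{lem:het}: the only $\rho_1$-nullcline is $\{x_1 = -\beta\}$, on which $\dot x_1 \gtrless 0$ for $\rho_1 \lessgtr \rho_{1,n}(\hat\mu)$, so any periodic orbit must encircle $q_n(\hat\mu)$ and cross this line exactly twice. To rule these out I would lean on the perturbative descriptions already underlying Lemma \ref{lem:hatmuneg} and Corollary \ref{cor:hatmupl}: as $\hat\mu \to -\infty$ one has $q_n(\hat\mu) \to q_a$ and center-manifold/regular-perturbation theory yields a gradient-like portrait without recurrence, while as $\hat\mu \to +\infty$ the flow in the chart $\check\rho = 1$ is, via Lemma \ref{lem:equivalenceXp}, a regular perturbation of the explicitly solvable linear system \eqref{eq:Xpluslin}, which has no limit cycles. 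After possibly enlarging $|\hat\mu^-|$ and $\hat\mu^+$, this excludes periodic orbits, and the dichotomy above yields the two remaining bullets. I expect the delicate point to be making these ``no limit cycle / no escape across the chart boundary'' statements hold uniformly over the half-lines $\hat\mu \le \hat\mu^-$ and $\hat\mu \ge \hat\mu^+$, which requires tracking $\mathcal J_{\hat\mu}$ and $\mathcal S_{\hat\mu}$ across the transition \eqref{eq:cc} and choosing the two thresholds once and for all.
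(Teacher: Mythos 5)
Your proposal is correct and follows essentially the same route as the paper, whose entire proof of this lemma is the one-line observation that it follows from Lemma \ref{lem:hatmuneg}, Corollary \ref{cor:hatmupl} and the Poincar\'e--Bendixson theorem applied on the invariant leaves $\{\hat \mu = const.\}$ of the sphere $\{\nu=0\}$. One simplification for the periodic-orbit exclusion you flag as the delicate point: on each leaf any periodic orbit must enclose the unique interior equilibrium $q_n(\hat \mu)$, and the connecting orbit already supplied by Lemma \ref{lem:hatmuneg} (resp.\ Corollary \ref{cor:hatmupl}) joins $q_n(\hat \mu)$ to the invariant boundary of the leaf, so a periodic orbit would have to cross it --- hence no uniform-in-$\hat \mu$ perturbation argument or enlargement of $\hat \mu^\pm$ is needed.
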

\begin{proof}
	\SJJ{This} follows from Lemma \ref{lem:hatmuneg}, Corollary \ref{cor:hatmupl} and \SJJ{the} Poincar\'e-Bendixson \SJJ{theorem}; \SJ{see Figure \ref{fig:bn_het_sphere}}.
\end{proof}
%
%

Following this result, we therefore fix an interval $I=[\hat \mu^-,\hat \mu^+]$ of $\hat \mu$-values, and then insert a section $\Sigma$ at $\rho_1=c$ for $c>0$ small enough. 
The $\rho_1$-nullcline intersects 
$\Sigma$ in a tangency point $(x_{1,t},c)$ for $x_{1,t}:=-\beta$ so that $\dot \rho_1\gtrless 0$ for all points on $\Sigma$ with $x_{1}\gtrless -\beta$. By the previous analysis any heteroclinic connection intersects $\Sigma$ with $x_1>x_{1,t}$. 
The center manifold calculation, see \eqref{eq:Jhatmugraph}, shows that the manifold $\mathcal J_{\hat \mu}$ intersects the section $\Sigma$ transversally in a point $(x_{1,c}(\hat \mu),c)$ for each $\hat \mu\in I$ with $x_{1,c}(\hat \mu)>x_{1,t}$, for all $\hat \mu\in I$ so that $\dot x_{1}>0$, $\dot \rho_1>0$ \SJJ{in a neighbourhood of $\mathcal J \cap \Sigma$}. By Lemma \ref{lem:collect}, we have that for $\hat \mu =\hat \mu^-$ the manifold $\mathcal S_{\hat \mu}$ intersects $\Sigma$ in a point $(x_{1,s}(\hat \mu^-),c)$ with $x_{1,s}(\hat \mu^-)>x_{1,c}(\hat \mu^-)$. The intersection is therefore transverse and we can continue $x_{1,s}(\hat \mu)$ smoothly for larger values of $\hat \mu>\hat \mu^-$. 
However, by Lemma \ref{lem:collect} we know that $\mathcal S_{\hat \mu}$ does not intersect $\Sigma$ for all $\hat \mu\in I$. 
The process of continuing $x_{1,s}$ for larger values of $\hat \mu>\hat \mu^-$ will therefore have to stop when either: $x_{1,s}$ grows unboundedly or $x_{1,s}\rightarrow x_{1,t}^+$. We can exclude the former by the analysis in the $\check \rho=1$ chart. Therefore there is a $\hat \mu_t>\hat \mu^-$ such that $x_{1,s}(\hat \mu)\rightarrow x_{1,t}^+$ for $\hat \mu\rightarrow \hat \mu_t^-$. With $x_{1,c}(\hat \mu_t)>x_{1,t}$ we conclude that the smooth function: $\hat \mu \mapsto x_{1,c}(\hat \mu)-x_{1,s}(\hat \mu)$ for $\hat \mu<\hat \mu_t$ changes sign at least once. The corresponding root corresponds to a heteroclinic connection. 
This connection is unique by the monotonicity of $\rho_{1,het}(t)$ and the fact that the associated Melnikov integral, being the derivative of the Melnikov distance function, has one sign. To see the latter, 
consider \eqref{eq:desing_prob} and notice that the derivative of the right hand side with respect to $\hat \mu$ is
$(\rho_1^{k(2k+1)},0)$. Therefore the sign of the Melnikov integrand \cite{Kuznetsov2013} is determined by 
\begin{align}
(x_{1,het}'(t),\rho_{1,het}'(t)) \wedge (\rho_{1,het}^{k(2k+1)}(t),0)= - \rho_{1,het}'(t)\rho_{1,het}^{k(2k+1)}<0.\label{eq:Melnikov}
\end{align}
This also shows that the intersection of $\mathcal J$ and $\mathcal S$ is transverse, \SJJ{completing} the proof of Lemma \ref{lem:het}.

\subsection{Finishing the proof of Theorem \ref{thm:bn3}}

In Figure \ref{fig:bn_het_cycs} we combine our findings into a new figure illustrating the improved singular cycles $\overline \Gamma(s)$, see the figure caption for further details. 
We then obtain the family of attracting limit cycles in Theorem \ref{thm:bn3} with the prescribed growth rate by perturbing $\overline \Gamma(s)$. For this, we work near $\hat \mu\approx \hat \mu_{het}$ and define two sections $\Sigma_1$ and $\Sigma_2$ as illustrated in Figure \ref{fig:bn_het_cycs}. We then flow points on $\Sigma_1$ forward and backward and measure their separation on $\Sigma_2$. The sections are defined in the chart $\check \rho=1$ with coordinates $(x_2,\epsilon_2,\nu_2,\mu_2)$, recall \eqref{eq:checkrho1}, as follows:
\begin{align*}
\Sigma_{1}:\quad \nu_2=\xi,\,x_2\in I, 0\le \epsilon_2,\mu_2\le \chi,\\
\Sigma_{2}:\quad \epsilon_2=\chi,\,x_2\in I, 0\le \nu_2,\mu_2\le \xi,
\end{align*}
for $\chi,\nu>0$ small enough and $I$ a small enough neighbourhood of $x_{2,w}$ such that the following local arguments apply
near $q_w = (x_{2,w},0,0,0)$, recall \eqref{eq:qw}. The bifurcation equation is then given by 
\begin{align}
F(x_2,\hat \mu,\epsilon_2) - B(x_2,\hat \mu,\epsilon_2)=0,\label{eq:bifeq}
\end{align}
where $F$ and $B$ are defined as the $x_2$-coordinates of the points on \SJJ{$\Sigma_2$} obtained by following \SJJ{initial conditions} $(x_2,\epsilon_2,\xi,\mu_2)$ \SJJ{on $\Sigma_1$} forward and backward, respectively, where $\mu_2=\epsilon_2^{k/(1+k)}\hat \mu$. Notice, by conservation of $\epsilon$ and $\hat \mu$, solutions of \eqref{eq:bifeq} with $\epsilon_2>0$ define closed orbits. Let $J$ be a sufficiently small neighbourhood of $\hat \mu=\hat \mu_{het}$. Then we have \SJ{the following}.
\begin{proposition}\label{prop:FB}
	Consider any $\nu \in (0,1)$. Then there exists an $\epsilon_{20}$ such that \SJ{
	$I\times J \ni (x_2,\hat \mu) \mapsto F(x_2,\hat \mu,\epsilon_2)$} and $I\times J \ni (x_2,\hat \mu) \mapsto B(x_2,\hat \mu,\epsilon_2)$ are both well-defined and $C^1$ depending continuously on $\epsilon_2\in [0,\epsilon_{20})$. In particular,\SJ{
	\begin{align}
	\label{eq:FB}
	F(x_2,\hat \mu_{het},0) = B(x_2,\hat \mu_{het},0),\quad F'_{\hat \mu}(x_2,\hat \mu_{het},0) \ne  B'_{\hat \mu}(x_2,\hat \mu_{het},0),
	\end{align}
	}for all \SJJ{$x_2\in I$}. Moreover, let $\lambda=2\sqrt{\Delta}/(\tau-\sqrt{\Delta})$ as defined in Theorem \ref{thm:bn3}. Then 
	there is a $c_F>0$ such that
	\[
%
	F'_{x_2}(x_2,\hat \mu_{het},\epsilon_2) = \mathcal O(e^{-c_F/\epsilon_2}), \quad 
	B'_{x_2}(x_2,\hat \mu_{het},0) = \mathcal O(\epsilon_2^{\nu k\lambda/(k+1)}).
	\]
\end{proposition}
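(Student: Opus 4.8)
The plan is to analyse the forward map $F$ and the backward map $B$ by separate means reflecting the two distinct passages they encode, and then to read off the matching identities \eqref{eq:FB} from the heteroclinic connection of Lemma \ref{lem:het}. By the $\epsilon_2$-monotonicity visible in \eqref{eq:checkrho1eqs} (near $q_w$ one has $g_2 \approx x_{2,w} > 0$, so $\epsilon_2$ decreases in forward time, whereas along the attracting branch where $g_2 < 0$ it increases), the forward orbit from $\Sigma_1$ reaches $\Sigma_2$ by shadowing the attracting slow manifold, while the backward orbit from $\Sigma_1$ reaches $\Sigma_2$ through a passage near the saddle $q_w$; compare Figure \ref{fig:bn_het_cycs}. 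Note that both stated rates tend to $0$ as $\epsilon_2 \to 0^+$, so the limiting maps $F(\cdot,\hat\mu_{het},0)$ and $B(\cdot,\hat\mu_{het},0)$ are constant in $x_2$.

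First I would treat $F$ using Fenichel theory. Since $\{\epsilon_2 = 0\}$ is invariant and carries the attracting critical manifold (the continuation of $S$ through $\mathcal W_0$ and $\mathcal J$ of Section \ref{ssec:results_blow-up}), for $0 < \epsilon_2 \ll 1$ there is a locally invariant slow manifold depending $C^1$ on $\epsilon_2$ up to $\epsilon_2 = 0$, with exponentially contracting stable fibres. As the entry coordinate $x_2$ on $\Sigma_1$ is transverse to this manifold, its deviation is contracted over the $\mathcal O(1)$ (slow-time) passage at the rate accumulated in $\mathcal O(1/\epsilon_2)$ fast-time units; this yields well-definedness and $C^1$-smoothness of $F$, its continuous extension to $\epsilon_2 = 0$, and $F'_{x_2}(x_2,\hat\mu_{het},\epsilon_2) = \mathcal O(e^{-c_F/\epsilon_2})$. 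The limiting value $F(x_2,\hat\mu_{het},0)$ is the point at which the orbit of $\mathcal J_{\hat\mu_{het}}$, continued along the heteroclinic, meets $\Sigma_2$.

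For $B$ I would localise near $q_w = (x_{2,w},0,0,0)$, whose linearisation computed from \eqref{eq:checkrho1eqs} has the single unstable eigenvalue $\sqrt{\Delta}$ (in $x_2$) and stable eigenvalues $-\tfrac{1+k}{k}x_{2,w}$, $-\tfrac{1}{2k(1+k)}x_{2,w}$, $-x_{2,w}$ (in $\epsilon_2,\nu_2,\mu_2$). Restricting to the invariant level sets $\{\hat\mu = \mathrm{const}\}\cap\{\epsilon = \mathrm{const}\}$ (recall the conserved quantities, cf.\ \eqref{eq:scaling2}) slaves $\mu_2$ and $\nu_2$ to $\epsilon_2$, so the passage reduces to an effective planar saddle in which the entry distance is measured by $\epsilon_2$ and the relevant stable eigenvalue has magnitude $\tfrac{1+k}{k}x_{2,w}$. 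A standard transition-map estimate near such a saddle — obtained via $C^1$-linearisation or the exchange lemma, which is the source of the admissible loss $\nu \in (0,1)$ — gives the $C^1$-extension of $B$ to $\epsilon_2 = 0$ together with the algebraic contraction $B'_{x_2}(x_2,\hat\mu_{het},\epsilon_2) = \mathcal O\big(\epsilon_2^{\nu k\lambda/(k+1)}\big)$, the exponent being $\nu$ times the eigenvalue ratio $\sqrt{\Delta}\big/\big(\tfrac{1+k}{k}x_{2,w}\big) = k\lambda/(k+1)$, using $x_{2,w} = \tfrac12(\tau - \sqrt{\Delta})$ from \eqref{eq:qw}. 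The limiting value $B(x_2,\hat\mu_{het},0)$ is the point at which $\mathcal S_{\hat\mu_{het}}$ meets $\Sigma_2$.

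Finally I would establish \eqref{eq:FB}. At $\epsilon_2 = 0$ and $\hat\mu = \hat\mu_{het}$ both limiting maps trace the unique heteroclinic $q_a \to q_w$ of Lemma \ref{lem:het}, so they agree: $F(x_2,\hat\mu_{het},0) = B(x_2,\hat\mu_{het},0)$ for all $x_2 \in I$. The inequality $F'_{\hat\mu} \neq B'_{\hat\mu}$ at this point is exactly the regular breaking of that connection under variation of $\hat\mu$, which was quantified in the proof of Lemma \ref{lem:het} through the Melnikov integrand \eqref{eq:Melnikov} having a fixed sign; since $F$ and $B$ track the $\Sigma_2$-traces of $\mathcal J_{\hat\mu}$ and $\mathcal S_{\hat\mu}$ respectively, the splitting rate $F'_{\hat\mu} - B'_{\hat\mu}$ is proportional to this nonzero Melnikov value. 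I expect the main obstacle to be the backward estimate: one must control the saddle passage uniformly along the one-parameter family of level sets as the entry distance $\epsilon_2 \to 0^+$, obtaining the sharp-up-to-$\nu$ algebraic rate with $C^1$ dependence on $(x_2,\hat\mu,\epsilon_2)$. This is where careful normal-form/linearisation near the (possibly resonant) saddle is required, and where the restriction $\nu < 1$ becomes unavoidable.
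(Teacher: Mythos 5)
Your proposal is correct and follows essentially the same route as the paper: Fenichel-type exponential contraction onto the slow manifold ($\mathcal W_0$, then $\mathcal J_{\hat\mu}$) for $F$, a $C^1$ (partial) linearisation of the hyperbolic saddle $q_w$ followed by a Gronwall/eigenvalue-ratio estimate giving the exponent $\nu k\lambda/(1+k)$ for $B$, and the identification of the limiting maps with the $\Sigma_2$-traces of $\mathcal J_{\hat\mu}$ and $\mathcal S_{\hat\mu}$ so that \eqref{eq:FB} follows from Lemma \ref{lem:het} and the sign of the Melnikov integrand \eqref{eq:Melnikov}. The only cosmetic difference is that the paper performs a single $C^1$ normal form of the full four-dimensional system near $q_w$ (Lemma \ref{lem:coord_trans}) rather than restricting to the two-dimensional level sets of $\hat\mu$ and $\epsilon$, which sidesteps the uniformity issue you correctly flag.
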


Before we prove this proposition, we will first show that it implies Theorem \ref{thm:bn3}. For this, we first notice that for any $c\in (0,s_0)$ there exists a $\xi(c)>0$ small enough such that \SJ{$\overline \Gamma_{X^+}(s)$} for $s\in (c,s_0)$ intersects $\Sigma_1$ once in a single point $(x_2(s),0,\xi,0)$ with $x_2'(s)>0$. \SJJ{Theorem \ref{thm:bn3} follows after} applying the implicit function theorem to \eqref{eq:bifeq} and using the properties described in Proposition \ref{prop:FB}. Notice in particular, that this gives a solution of \eqref{eq:bifeq} of the form $\hat \mu=\hat \mu(x_2,\epsilon_2)$. Seeing that $\epsilon_2=\xi^{-(1+k)}\epsilon$ and $\mu=\epsilon^{k/(1+k)}\hat \mu$ by \eqref{eq:checkrho1} \SJ{on $\Sigma_1$}, we obtain the desired $\mu(s,\epsilon) = \epsilon^{k/(1+k)}\hat \mu(x_2(s),\xi^{-(1+k)}\epsilon)$. 

\subsection{Proof of Proposition \ref{prop:FB}}
The properties of $F$ \SJ{are} standard\SJ{; see \cite{Jelbart2020d}. Here} we just summarize the approach. Firstly, as we flow points $\Sigma_1$ forward, following \SJ{$\overline \Gamma_{X^+}(s)$}, they are eventually exponentially contracted towards the slow manifold. We then guide the flow along this manifold, \SJJ{first along} $\mathcal W_0$, and eventually \SJJ{along} $\mathcal J_{\hat \mu}$ using the center manifold at $q_a$. Seeing that $\hat \mu \approx \hat \mu_{het}$, we conclude that \SJJ{solutions} reach $\Sigma_2$ \SJ{so} that $F$ is well-defined. In particular, $F(x_2,\hat \mu,0)$ is the $x_2$-value of the intersection $\mathcal J_{\hat \mu}\cap \Sigma_2$. 

For $B$ we proceed more carefully. 
Firstly, we recall that $q_w$ is fully hyperbolic. Indeed, the linearization has the following non-zero eigenvalues:
\begin{align*}
\lambda x_{2,w}, \ -\frac{1+k}{k}x_{2,w}, \ -\frac{1}{2k(1+k)} x_{2,w}, \  -x_{2,w},
\end{align*}
where $\lambda=2\sqrt{\Delta}/(\tau-\sqrt{\Delta})$ as defined in Theorem \ref{thm:bn3}. To describe the map $\Sigma_1\rightarrow \Sigma_2$ defined by the backward flow of \eqref{eq:checkrho1eqs}, we use partial linearizations (since resonances in general will preclude a full linearization) within the invariant spaces $\{\epsilon_2=0\}$ and $\{\nu_2=\mu_2=0\}$ \SJJ{in order} to obtain the following. 

\begin{lemma}
	\label{lem:coord_trans}
	There exists a $C^1$ diffeomorphism bringing  \eqref{eq:checkrho1eqs} into the following system
	\begin{equation}\label{eq:normalized2}
	\begin{aligned}
	x_2' &= \lambda x_2+\epsilon_2^k \mathcal O(\nu_2^{2k(1+k)}+\epsilon_2\nu_2^{2(1+k)}), \\
	\epsilon_2' &= -\frac{k+1}{k}  \epsilon_2, \\
	\nu_2' &= \frac{\nu_2}{2k(1+k)},\\
	\mu_2' &=-\mu_2,
	\end{aligned}
	\end{equation}
	upon a regular reparametrization of time. 
\end{lemma}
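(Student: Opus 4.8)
The plan is to realise the required $C^1$ diffeomorphism as the composition of a time reparametrisation that linearises the three contracting equations exactly, followed by a finite sequence of near-identity transformations that strip the superfluous nonlinear terms out of the $x_2$-equation. First I would translate the saddle $q_w=(x_{2,w},0,0,0)$ (recall \eqref{eq:qw}) to the origin and record that, because each of the $\epsilon_2$-, $\nu_2$- and $\mu_2$-equations in \eqref{eq:checkrho1eqs} carries its own variable as a factor of $g_2$, the hyperplanes $\{\epsilon_2=0\}$, $\{\nu_2=0\}$ and $\{\mu_2=0\}$ are invariant; consequently the local stable and unstable manifolds of $q_w$ are already coordinate hyperplanes and need no straightening, and the linear part is diagonal with the real, nonzero eigenvalues computed above. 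I would then reparametrise time by dividing the right-hand side of \eqref{eq:checkrho1eqs} by $g_2$, which is smooth and strictly positive near $q_w$ since $g_2(q_w)=x_{2,w}>0$ (recall \eqref{eq:g2}). This regular reparametrisation renders the $\epsilon_2$-, $\nu_2$- and $\mu_2$-equations exactly linear with the constant coefficients displayed in \eqref{eq:normalized2} (the eigenvalues divided by $x_{2,w}$), while the $x_2$-equation becomes $x_2'=f_2/g_2-x_2=\lambda x_2+N$, with $N$ of higher order and linear coefficient $\partial_{x_2}(f_2/g_2-x_2)|_{q_w}=\sqrt{\Delta}/x_{2,w}=\lambda$.

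Next I would eliminate the unwanted part of $N$ by successively solving homological equations via near-identity changes $x_2\mapsto x_2+h(x_2,\epsilon_2,\nu_2,\mu_2)$. The decisive arithmetic is that the unstable direction is one-dimensional with eigenvalue $\lambda>0$ while the three remaining eigenvalues are negative: for a monomial $x_2^{d}\epsilon_2^{a}\nu_2^{b}\mu_2^{c}$ of total degree at least two the resonance denominator equals $\lambda(1-d)+a\tfrac{k+1}{k}+b\tfrac{1}{2k(1+k)}+c$, which is strictly positive for every $d\in\{0,1\}$. Hence every nonlinear term with $x_2$-degree at most one is non-resonant and removable; in particular all contributions with no $x_2$ factor -- exactly those that would spoil the claimed residual shape -- can be cleared. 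The only terms that may resist removal necessarily have $x_2$-degree at least two.

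The precise form of the residual is then pinned down by the explicit dependence of $f_2,g_2$ on the regularisation, read off on the two invariant subspaces. Linearising the $x_2$-equation restricted to $\{\epsilon_2=0\}$, where every $\epsilon_2^{k}\phi_+$ contribution drops out, forces the surviving residual to carry a factor $\epsilon_2^{k}$, the power $k$ being inherited from the way the regularisation enters as $\epsilon_2^{k}\phi_+(\cdot)$. Linearising on $\{\nu_2=\mu_2=0\}$ and, more generally, clearing the $\nu_2$-independent part forces a factor of $\nu_2$; since $\nu_2$ enters only through $\widehat\theta_1(\cdot,\cdot,\cdot,\nu_2^{2k(1+k)})$ (and the analogous $\widehat\theta_2$) and through $\phi_+(\nu_2^{2(1+k)}\epsilon_2)$, the two leading surviving powers are exactly $\nu_2^{2k(1+k)}$ and $\epsilon_2\nu_2^{2(1+k)}$. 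Combining the two constraints yields the stated residual $\epsilon_2^{k}\mathcal O(\nu_2^{2k(1+k)}+\epsilon_2\nu_2^{2(1+k)})$.

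The hard part will be the resonance and smoothness bookkeeping. The three contracting eigenvalues are rationally dependent -- for instance $-x_{2,w}=2k(1+k)\cdot\bigl(-\tfrac{1}{2k(1+k)}x_{2,w}\bigr)$ is a resonance of order $2k(1+k)$ -- so Sternberg's theorem yields no $C^{\infty}$ full linearisation and the residual cannot in general be removed entirely; this is precisely why the lemma claims only a partial, $C^1$ normalisation. I would therefore perform the linearisation separately within the invariant subspaces $\{\epsilon_2=0\}$ and $\{\nu_2=\mu_2=0\}$, on each of which the relevant sub-spectrum is tractable, and then invoke a finite-smoothness ($C^1$) linearisation result of Belitskii type rather than a $C^{\infty}$ theorem. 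The genuinely delicate step is to glue these subspace normalisations into a single transformation that is $C^1$ on a full neighbourhood of $q_w$ while certifying that the surviving residual has exactly the stated product structure in $\epsilon_2$ and $\nu_2$, rather than merely an $\mathcal O$-bound of the wrong shape.
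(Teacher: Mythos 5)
Your overall strategy---reparametrize time by dividing by $g_2$ to make the three contracting equations exactly linear, then normalize the $x_2$-equation---starts the same way as the paper, but the normalization step as you describe it has two genuine gaps. First, your resonance bookkeeping only covers monomials of $x_2$-degree $d\le 1$, and you treat the terms of degree $d\ge 2$ "that may resist removal" as if they were harmless. They are not: to reach the claimed form, every term \emph{not} carrying a factor $\epsilon_2^k\nu_2^{2k(1+k)}$ or $\epsilon_2^{k+1}\nu_2^{2(1+k)}$ must be removed, and after expanding $f_2/g_2-x_2$ about $x_{2,w}$ these include terms of arbitrary $x_2$-degree coming from the node nonlinearity $(\mu_2+\tau x_2-\delta)/x_2$ and from the $\epsilon_2^k\phi_+(0)(\cdots)$ contributions at $\nu_2=0$. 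For a monomial $x_2^d\mu_2^c$ or $x_2^d\epsilon_2^a$ with $d\ge2$ the resonance denominator is $(d-1)\lambda-c$, respectively $(d-1)\lambda-a(k+1)/k$, which vanishes for suitable $\lambda$ (i.e.\ suitable $\tau,\delta$); so the homological equations do not close without restricting parameters. Second, even granting formal removability, you acknowledge but do not resolve the passage from term-by-term elimination to an actual $C^1$ conjugacy on a full neighbourhood of $q_w$ with the remainder in the stated product form---and that is exactly where the content of the proof lies, since a merely $C^1$ change of variables does not by itself preserve high-order vanishing of a remainder.

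The paper avoids both problems by exploiting structure your scheme does not use. On $\{\epsilon_2=0\}$, system \eqref{eq:checkrho1eqs} is, via the blow-down map obtained from \eqref{eq:checkrho1}, smoothly equivalent to the $\mu$-extended system $\{(x',y')=X^+(x,y,\mu),\ \mu'=0\}$ near its hyperbolic unstable node; conjugating that node to its linearization and transporting back through the blow-up yields a \emph{single smooth} diffeomorphism that removes all the $\widehat\theta_i$ nonlinearities at once (this gives \eqref{eq:normalized1}), while manifestly preserving the $\epsilon_2^k\phi_+(\nu_2^{2(1+k)}\epsilon_2)$ structure of what remains. The only surviving nonlinearity without a $\nu_2$-factor then lives in the planar $(x_2,\epsilon_2)$ saddle subsystem, where Belitskii's theorem always provides a $C^1$ linearization (no spectral caveats in dimension two), and lifting that two-dimensional conjugacy to the full space gives \eqref{eq:normalized2}. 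If you want to salvage your route, replace the four-dimensional homological scheme by these two sub-steps: a genuinely smooth conjugacy on $\{\epsilon_2=0\}$ obtained from the node (not from infinitely many near-identity changes), followed by a two-dimensional $C^1$ linearization lifted to the full space.
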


\begin{proof}
\SJ{See Appendix \ref{app:coord_trans}.}
\end{proof}

We now integrate \eqref{eq:normalized2} backwards from $\nu_2=\xi$ to $\epsilon_2=\chi$. A simple calculation, based upon a Gronwall-type estimate, gives
\begin{align*}
x_2 \mapsto \left(\chi^{-1} \epsilon_2\right)^{k\lambda/(1+k)} x_2+\mathcal O(\epsilon_2^{1/(1+k)}),
\end{align*}
which defines $B$ in the new local coordinates. From this, we then similarly obtain the desired estimate of the $x_2$-derivative of $B$ in Proposition \ref{prop:FB} \SJJ{using the relevant variational equations (these are taken with respect to system \eqref{eq:normalized1} in Appendix \ref{app:coord_trans})}.  Moreover, it is clear that $B(x_2,\hat \mu,0)$ coincides with the $x_2$-coordinate of the intersection $\mathcal S_{\hat \mu}\cap \Sigma_2$. Consequently, \eqref{eq:FB} holds by Lemma \ref{lem:het} and the transverse intersection of $\mathcal J$ and $\mathcal S$. 
%
%

\section{Outlook}
\label{sec:Outlook}

The unfolding of BE singularities in BEB under parameter variation is generic in PWS systems. It follows that singularly perturbed BEB is also generic under parameter variation in singular perturbation problems losing smoothness along a codimension-1 switching manifold $\Sigma$ as a perturbation parameter $\epsilon \to 0$. In this manuscript the notion of singularly perturbed BEB is formally defined (see Definition \ref{def:beb}), and a classification based on known classifications for PWS BEB from \cite{Filippov1988,Hogan2016,Kuznetsov2003} is given; see Table \ref{tab:classification}. We showed in Theorem \ref{theorem:prop_normal_form} that the local normal form \eqref{eq:normal_form} first derived in \cite{Jelbart2020d} for the analysis of singularly perturbed BF bifurcations in particular, is capable of generating all 12 singularly perturbed BEBs. It is worthy to note that a corresponding PWS normal form \eqref{eq:PWS_normal_form} is \SJJ{also} obtained in the limit $\epsilon \to 0$. 

Following the introduction of the normal form \eqref{eq:normal_form}, we studied its dynamics in parameter regions corresponding to each singularly perturbed BEB. Using a sequence of blow-up transformations to resolve a loss of smoothness along $\Sigma$, and subsequently, degeneracy arising from the BE singularity itself, we derived two desingularized systems \eqref{eq:desing_prob} and \eqref{eq:desing_prob2} in Lemma \ref{lem:desing_prob}. Studying the dynamics of these systems allowed for a detailed description of the unfolding for all 12 singularly perturbed BEBs. This was presented succinctly in Theorem \ref{thm:bifs} and Figure \ref{fig:bifs}. In many cases, we were able to provide explicit paramterizations for the location of codimension 1 and 2 bifurcations involved in the unfoldings. It is worthy to note that in general, the bifurcation structure depends quantitatively, but not qualitatively, on the decay rate $k$ determining the rate at which the system loses smoothness along $\Sigma$ as $\epsilon \to 0$; see equation \eqref{eq:reg_asymptotics}. In particular, all identified bifurcations are singular, in the sense that they occur within a parameter regime $\mu = \mathcal O(\epsilon^{k/(k+1)})$ which shrinks to zero in the PWS limit $\epsilon \to 0$.

We then demonstrated the suitability of our framework for studying the geometry of so-called double-separatrices, which constitute non-trivial boundaries between cases BF$_{1,2}$ and BS$_{1,2}$ in parameter space. A result on the boundary between cases BF$_{1,2}$ was presented in Proposition \ref{prop:BF}, but a complete analysis of this and the BS$_{1,2}$ boundary is left for future work.

Finally, special attention was devoted to the so-called BN$_3$-explosion, which may be considered a `generic analogue' of the (degenerate) explosion identified already in \cite{Kristiansen2019d}, which can be considered as the `$\gamma = 0$ case' of system \eqref{eq:normal_form}. We \SJJ{showed} that the continuous family of singular cycles shown in Figure \ref{fig:bn_het_cycs} perturbs to a continuous family of stable limit cycles for $0<\epsilon\ll1$. This is described in Theorem \ref{thm:bn3}, where the growth rate of the cycles is also quantified as a function of $\epsilon$ and the parameter $k$ determining the rate at which the system loses smoothness. We emphasize that the focus of  \cite{Kristiansen2019d} was on the existence of relaxation oscillations, \SJJ{whereas here we focus on the details of} the explosion itself. 

\

We conclude with some discussion on the relation to explosive onset of oscillations in classical slow-fast systems and other singularly perturbed BEBs is discussed below. Applications, more degenerate cases of interest, \SJJ{and singular bifurcations of higher codimension} are also considered.

\subsection{Relation to classical canard explosion and singularly perturbed BF$_3$ explosion}
\label{ssec:outlook_canard_explosion}

	\KUK{In Theorem \ref{thm:bn3} we described a novel explosion mechanism of limit cycles due to the BN$_3$ bifurcation. This `explosion'
	is reminiscent of the canard explosion phenomenon in classical slow-fast systems. Here too, an entire family of singular cycles exists for a unique parameter value. Geometrically both families are upon blow-up identified in a similar way through heteroclinic cycles; for Theorem \ref{thm:bn3} the heteroclinic cycles occur due to the transverse intersection of the manifolds $\mathcal J_{\hat \mu}$ and $\mathcal S_{\hat \mu}$.
	Moreover, in both cases, the singular cycles perturb to limit cycles for $0<\epsilon\ll1$, see e.g.~\cite{Dumortier1996,Krupa2001b,Kuehn2015}. 
	
	However, the `explosion' described by Theorem \ref{thm:bn3} differs from the classical canard explosion phenomenon in a number of important respects. First, for the BN$_3$ bifurcation, there is only an attracting slow manifold. \SJ{Since there is no repelling slow manifold, there are no canards.} 
	Instead, \SJ{repulsion in} the BN$_3$ \SJ{explosion} comes from the unstable node. Consequently, the limit cycles in Theorem \ref{thm:bn3} are also always stable since the contraction of the slow manifold dominates the hyperbolic repulsion from the node; in the canard case the limit cycles can be either attracting, repelling or neither, the details depending on \WM{a} slow divergence integral \cite{Krupa2001b}. These differences also manifest themselves through different growth rates. In the BN$_3$ case, the growth rate \eqref{eq:growth_rate} is algebraic, whereas the classical canard explosions are characterised by exponential growth (since \eqref{eq:growth_rate} is exponentially small in this case).
	
	The onset of oscillations that are $\mathcal O(1)$ with respect to $\epsilon$ in the singularly perturbed BF$_3$ bifurcation, described in detail in \cite{Jelbart2020d}, is again different. \SJ{In the BF$_3$ case} one also identifies a family of singular cycles, but these cycles all lie on the blow-up spheres. \SJ{As a consequence, the family of limit cycles obtained after perturbation and blow-down} 
	is not explosive in any way. We will discuss this further in the following section in the context of a regularized stick-slip oscillator capable of producing both bifurcations BN$_3$ and BF$_3$ (albeit degenerate ones) upon parameter variations. }
	

\subsection{Degenerate singularly perturbed BE bifurcation in applications}
\label{ssec:outlook_applications}


\SJ{Singularly perturbed} BN$_3$ explosion of the kind described in Theorem \ref{thm:bn3} occurs in a regularized Gause problem; see \cite{Jelbart2020d} for the regularized model, and \cite{Gause1936} for the original PWS system. As previously stated, \KUKK{a singularly perturbed BN} bifurcation also occurs in the model for substrate-depletion in \cite{Kristiansen2019d}, where it \SJ{shown to provide a mechanism for} the onset of the relaxation-type oscillations. 
\SJ{In} the context of the normal form \eqref{eq:normal_form}, this case is `degenerate' due \SJ{to} $\gamma = 0$. As shown in \cite{Kristiansen2019d}, this produces a critical manifold $S$ with no reduced flow, however with an additional (infra-)slow timescale. 

\begin{figure}[t!]
	\centering
	\includegraphics[scale=0.6]{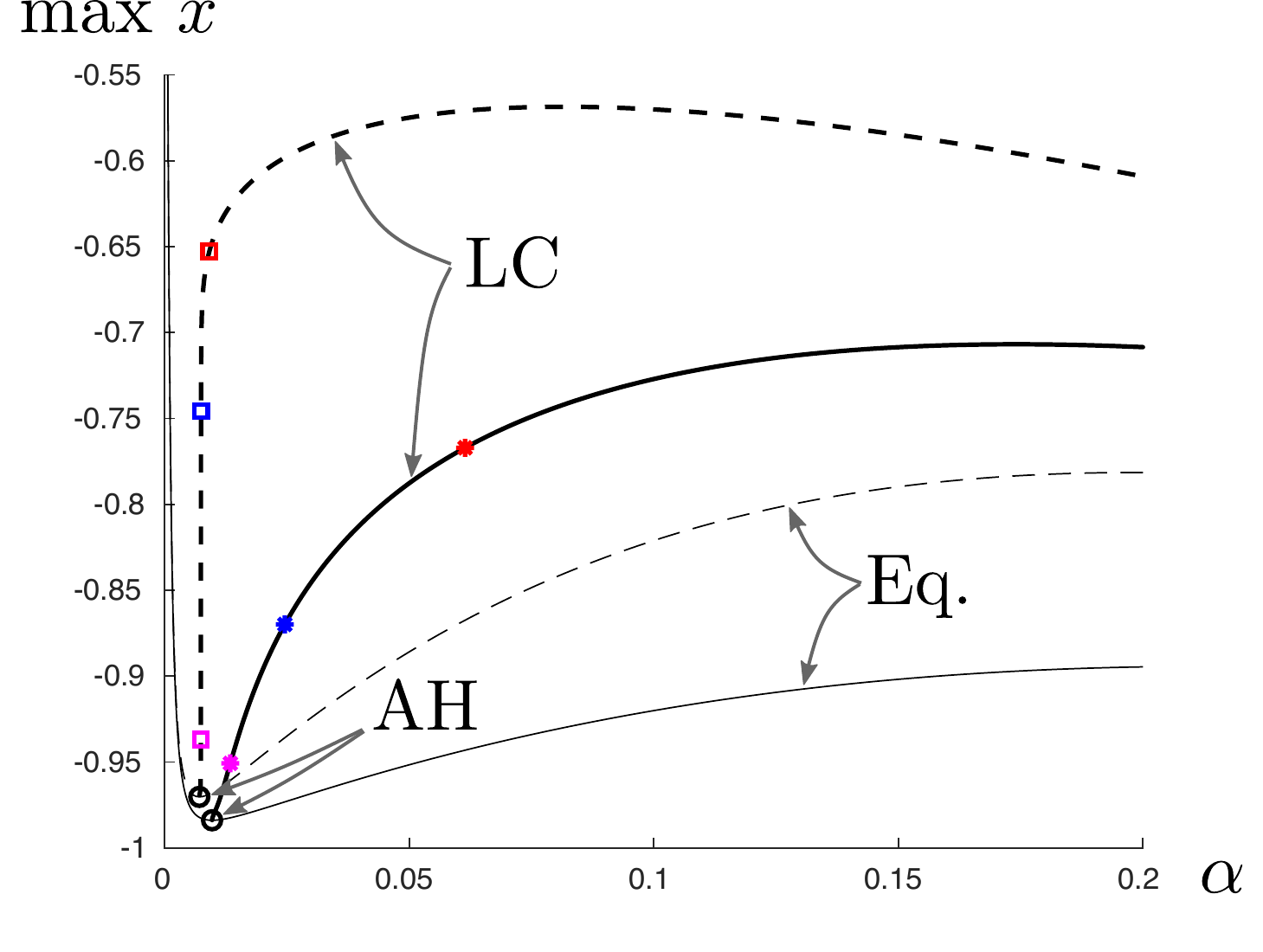}
	\caption{Bifurcation diagram for \SJ{system \eqref{eq:general} defined by \eqref{eq:stickslip} with \eqref{eq:frictionlaw} and \eqref{eq:ss_reg}}. The full lines are for the parameter values in \eqref{eq:bf3par} whereas the dashed lines are for the parameter values in \eqref{eq:bn3par}, corresponding to (degenerate) \SJ{BF$_3$ and BN$_3$} bifurcations, respectively. The thinner lines track the equilibrium whereas the thicker curves correspond to the limit cycles \SJ{(denoted LC in the figure)}, using $\max x$ as a measure of the amplitude, emerging from the two \SJ{Andronov-}Hopf \SJ{AH} bifurcation points indicated by two black circles. The limit cycles corresponding to the purple, blue and red points ({disks} for the \KUKK{BF} case and squares for the \SJ{BN} case) are illustrated in Figure \ref{fig:kuk_ph}(a) and (b), respectively.}
	\label{fig:kuk_bif}
\end{figure}

\begin{figure}[t!]
	\centering
	\subfigure[]{\includegraphics[width=.45\textwidth]{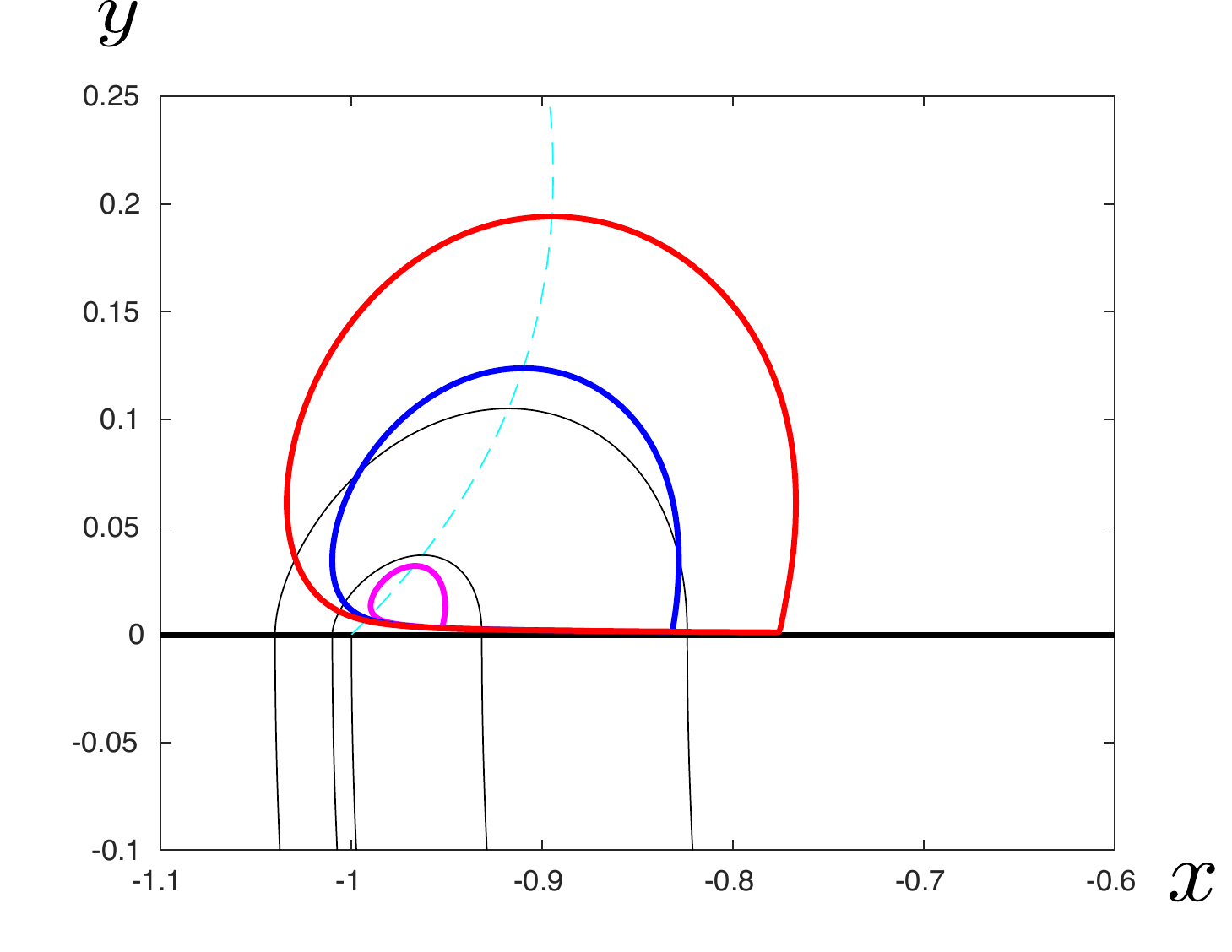}}
	\subfigure[]{\includegraphics[width=.45\textwidth]{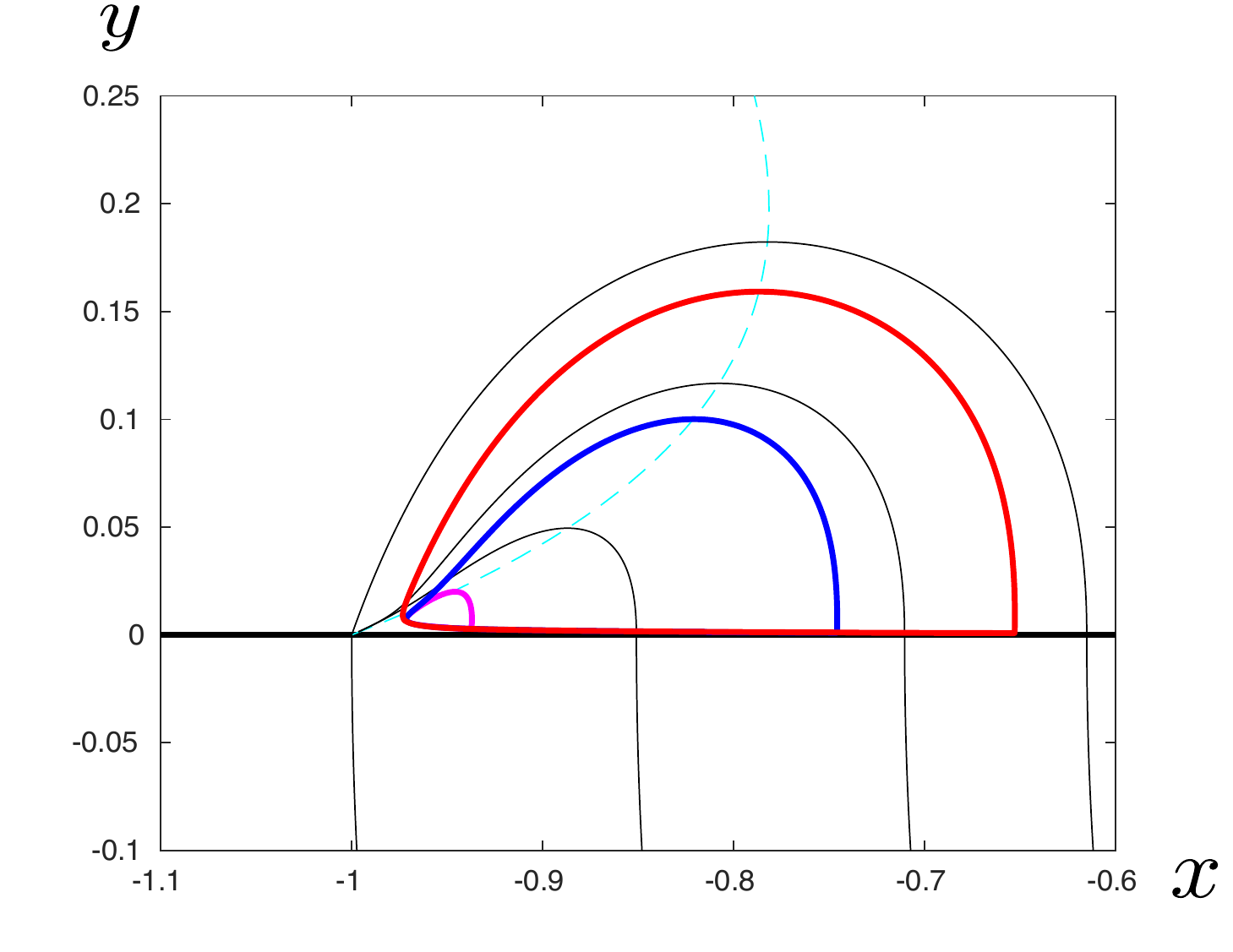}}
	\caption{Limit cycles of \SJ{system \eqref{eq:general} defined by \eqref{eq:stickslip} with \eqref{eq:frictionlaw} and \eqref{eq:ss_reg}}. Here (a) and (b) correspond to the parameter values in \eqref{eq:bf3par} and \eqref{eq:bn3par} producing (degenerate) \SJ{BF$_3$} and \SJ{BN$_3$} bifurcations, respectively. The thinner black lines show the phase portraits of the corresponding PWS system at the singular bifurcation $\alpha=0$. \SJ{The switching manifold along $\{y=0\}$ is shown in black, and the} dotted cyan \SJJ{curve} is the nullcline of $Z^+(\cdot,0)$.}
	\label{fig:kuk_ph}
\end{figure}

Similar (degenerate) explosions may also be observed in regularized stick-slip oscillators under variation of the belt speed, \SJ{see e.g.~}\cite[Section 5.1]{Jelbart2020d}. This model takes the form \eqref{eq:general} satisfying Assumption \ref{assumption:1} with 
\begin{align}
 Z^+(x,y,\alpha) = \begin{pmatrix}
               y-\alpha\\
               -x-\mu(y)
       \end{pmatrix},\quad 
 Z^-(x,y,\alpha) = \begin{pmatrix}
                y-\alpha\\
                -x+\mu(-y)
       \end{pmatrix} ,       \label{eq:stickslip}
\end{align}
where $\mu$ describes the friction law. In \cite{Kristiansen2019c} it is given as 
\begin{align}
 \mu(y) = \mu_m+(\mu_s-\mu_m) e^{-\rho y} + c y, \label{eq:frictionlaw}
\end{align}
which was proposed by \cite{Berger2002} and also studied in \cite{papangelo2017a,Won2016}. Here $\mu(0)=\mu_s$ and $\mu_m>\mu_m>0$, $\rho>0$, $c\in( 0,\rho(\mu_s-\mu_m))$ to ensure that $\mu'(0)<0$. 
The model with (\ref{eq:stickslip}) has a BEB for $\alpha=0$ at $(x,y)=(-\mu_s,0)$. An easy calculation, see also \cite{Jelbart2020d}, shows that this bifurcation \SJ{can be either} a degenerate BN$_3$ \SJ{for} $\mu'(0)<-2$, \SJ{or a degenerate BF$_3$ for} $\mu'(0)\in (-2,0)$. Although these bifurcations are degenerate with $\gamma=0$, we nevertheless use this example to illustrate in Figures \ref{fig:kuk_bif} and \ref{fig:kuk_ph} the differences between these cases. Specifically, in Figure \ref{fig:kuk_bif} we show a bifurcation diagram for two different sets of parameters: The dotted lines are for 
\begin{align}
 \mu_s = 1,\,\mu_m=0.5,\,c=0.85\quad \mbox{and} \quad \rho=4,\label{eq:bf3par}
\end{align}
while the full lines are for the same values except \SJJ{with} $\rho=7.5$, i.e. 
\begin{align}
 \mu_s = 1,\,\mu_m=0.5,\,c=0.85\quad \mbox{and}\quad  \rho=7.5. \label{eq:bn3par}
\end{align}
These two cases give $\mu'(0)=-1.10$ and $\mu'(0)=-2.90$, respectively, and therefore correspond to (degenerate) \SJ{BF$_3$ and BN$_3$} cases. \SJ{We use a regularization
\begin{equation}
\label{eq:ss_reg}
\phi(y) = \frac{1}{2} \left( 1 + \frac{y}{\sqrt{y^2 + 1}} \right) 
\end{equation}
which satisfies Assumptions \ref{assumption:1}, \ref{assumption:2} and \ref{assumption:3} with} \KUK{$k=2$, and set $\epsilon = 0.001$ in system \eqref{eq:general}.} As discussed, we only see an explosive growth \SJ{of the limit cycle amplitude in the BN$_3$ case}. For further comparison, Figure \ref{fig:kuk_ph} illustrates examples of limit cycles; (a) \SJ{in case} \SJ{BF$_3$}, \SJJ{and} (b) \SJJ{in case} \SJ{BN$_3$}. The colours correspond to the colours of the points in Figure \ref{fig:kuk_bif}. See figure captions for further details.

\begin{figure}[t!]
	\centering
	\includegraphics[scale=.4]{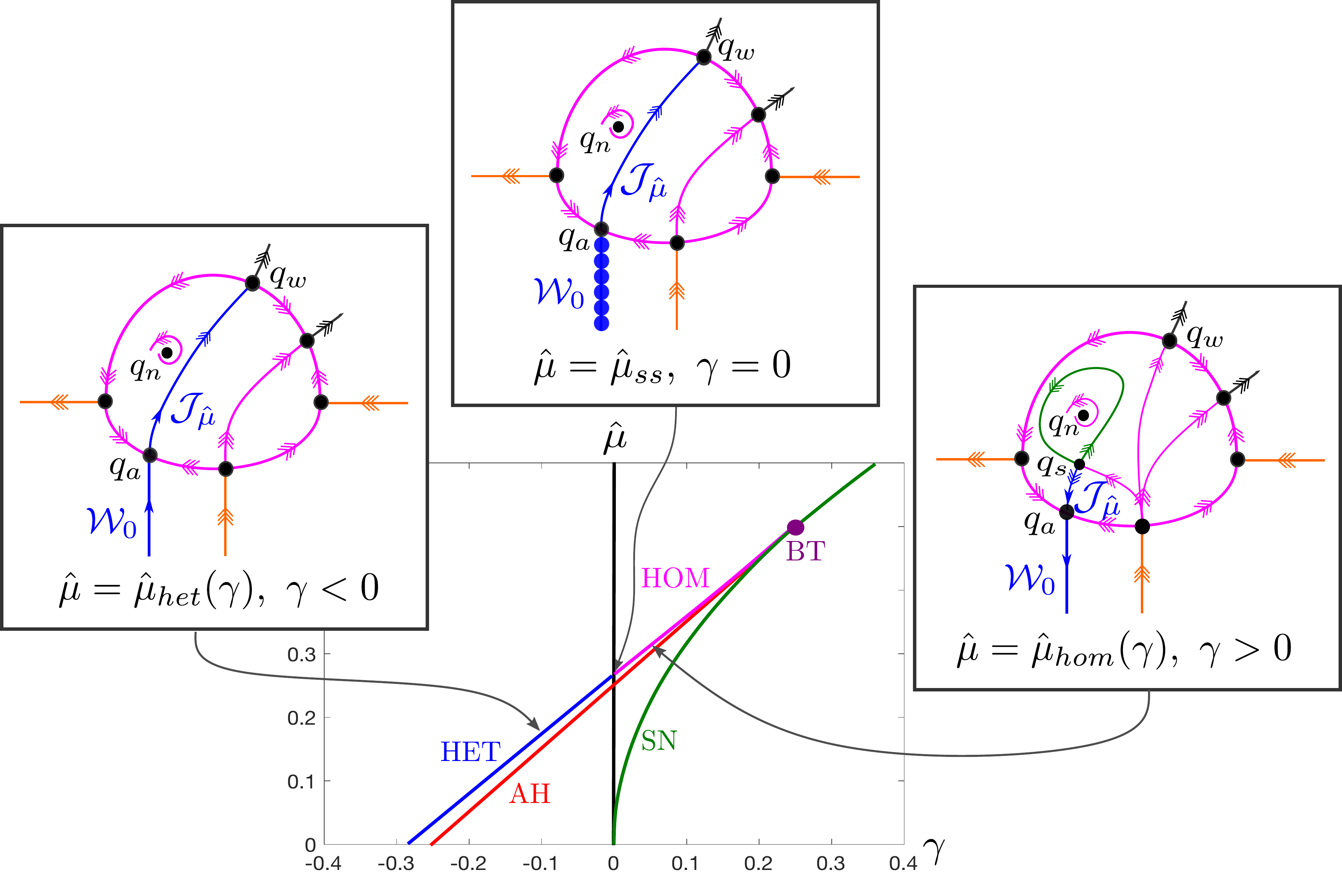}
	\caption{Bifurcation diagram from Figure \ref{fig:bifs}(b), overlaid with the numerically computed branch of heteroclinics $\hat \mu = \hat \mu_{het}(\gamma)$ of Lemma \ref{lem:het}. Homoclinic and heteroclinic branches appear to extend and intersect for $\gamma = 0$. The degenerate singularly perturbed BN bifurcation for $\gamma = 0$, i.e.~between cases \SJJ{BN$_2$} and BN$_3$, has been described in detail already in an application in \cite{Kristiansen2019d}. Corresponding dynamics after blow-up are shown for $\gamma < 0$, $\gamma = 0$ and $\gamma > 0$.}
	\label{fig:bifs_het}
\end{figure}

\subsection{Connecting BN$_{2,3}$ across $\gamma=0$}
\label{ssec:bn13_connection}

Since Theorem \ref{theorem:prop_normal_form} applies for all $\gamma \in \mathbb R$, it follows that both smooth and PWS normal forms for degenerate BE bifurcations with $\gamma = 0$ are obtained by setting $\gamma = 0$ in \eqref{eq:normal_form} and \eqref{eq:PWS_normal_form} respectively. Hence, it is expected that much of the analysis presented herein can be applied in order to study these degenerate cases, which can be thought of as `boundary cases' separating (singularly perturbed) BF$_{1,3}$, BF$_{4,5}$, BN$_{2,3}$, BN$_{1,4}$ and BS$_{1,3}$ bifurcations; see again Figure \ref{fig:bifs} and the caption.

The BN$_{2,3}$ boundary is particularly interesting as (i) it arises naturally in the context of substrate-depletion oscillations as described above \cite{Kristiansen2019d}, and (ii) there is evidence that upon extension through $\gamma = 0$, the homoclinic branch identified in case \SJJ{BN$_2$} with $\gamma>0$ connects to the heteroclinic branch for $\gamma < 0$ which is responsible for the explosion in case BN$_3$. Computing the heteroclinic connection of Lemma \ref{lem:het} numerically and plotting it over the bifurcation diagram in Figure \ref{fig:bifs}(b), we obtain the diagram in Figure \ref{fig:bifs_het}.  
Here we see the expected transition from case BN$_3$ to \SJJ{BN$_2$} as $\gamma$ crosses zero. 
Our observations also provide evidence that $\hat \mu_{het}(\gamma) < \hat \mu_{ah}(\gamma)$ on $\gamma < 0$. Note that if $\hat \mu_{het}(\gamma)$ is an analytic continuation of $\mu_{hom}(\gamma)$ \SJ{through $\gamma = 0$}, then despite appearances in Figure \ref{fig:bifs_het}, it cannot be linear. This would follow from the nonlinearity in the local parameterization of $\mu_{hom}(\gamma)$ near the BT point.

\SJJ{
	\subsection{Higher codimensions}
\label{ssec:higher_codim}

As highlighted in observation (ix) following the statement of Theorem \ref{thm:bifs}, the diagrams in Figure \ref{fig:bifs} can be obtained from one another under suitable variation in two additional parameters $\tau$ and $\delta$. In particular, it follows from the classification in Table \ref{tab:classification} that $\delta = 0$, $\tau = 0$ and $\Delta = 0$ form boundaries between the cases represented in Figure \ref{fig:bifs}. For example, panel (b) is obtained from panel (a) by crossing from $\Delta <0$ to $\Delta > 0$ with $\tau, \delta > 0$. To see this transition in the blown-up space, compare Figures \ref{fig:bu2}(b) and \ref{fig:bu2}(c). For $\Delta = 0$, a saddle-node bifurcation occurs on the intersection of the upper blow-up sphere (shown in magenta) with the plane $\{\epsilon = \ 0\}$, giving rise to the equilibria $q_w$ and $q_o$. Our analysis provides a framework within which transitions such as these can be analysed, thereby providing a solid foundation and program for future work.

Additional variation in $(\tau,\delta)$ leads naturally to higher codimension (singular) bifurcations. Consider for example the effect of crossing $\delta = 0$ for fixed $\tau > 0$. As $\delta \to 0^+$, the codimension-2 Bogdanov-Takens point $(\hat \mu_{bt},\gamma_{bt})(\delta) \to (0,0)$ in either Figure \ref{fig:bifs}(a) or \ref{fig:bifs}(b). Conversely, we expect that the codimension-2 point corresponding to the intersection of the heteroclinic and saddle-node curves in Figure \ref{fig:bifs}(d) tends to $(0,0)$ in the limit $\delta \to 0^-$; see again Remark \ref{rem:bs12}. These observations indicate the existence of a codimension-3 (singular) bifurcation for $\hat \mu = \gamma = \delta = 0$, $\tau > 0$, which involves the above-mentioned codimension-2 bifurcations in its unfolding. Moreover, since topologically non-equivalent diagrams also arise if one considers instead the same case but with $\tau < 0$, it follows that the dynamics are organised by a singular codimension-4 bifurcation for $\gamma = \hat \mu = \delta = \tau = 0$. The current manuscript therefore serves as a strong foundation for ample future work in this direction. 
}

\bibliographystyle{siam}
\bibliography{beb_bib-1}

\appendix

\section{Proof of the normal form theorem \ref{theorem:prop_normal_form}}
\label{app:normal_form}

Following a suitable parameter-dependent coordinate translation we may assume that system \eqref{eq:general} has a nondegenerate BE bifurcation at $z_{bf}=(0,0)$ when $\alpha_{bf}=0$. It follows by arguments analogous to \cite[p.38]{Jelbart2020d} (see also \cite{Jelbart2020b} for further details) that the system
\begin{equation}
\label{eq:expanded_sys}
\begin{split}
\dot u &= \phi\left(y \epsilon^{-1} \right) \left[ s_1(\alpha) + a(\alpha) u + b(\alpha) y + \varphi_1\left(u, y, \alpha \right) \right] ,  \\
\dot y &= 1 + \phi\left(y \epsilon^{-1} \right) \left[ - 1 + s_2(\alpha) + c(\alpha) u + d(\alpha) y + \varphi_2\left(u, y, \alpha  \right) \right] ,
\end{split}
\end{equation}
can be obtained from system \eqref{eq:general} after a smooth invertible local coordinate transformation of the form $u=L(x,y,\alpha)$, and \SJJ{a} transformation of time amounting to division by $Z^-_2((M(u,y,\alpha), y), \alpha)$, which is locally nonzero due to \eqref{eq:beb_conds}.\footnote{The $(\dot{\ })$ notation in \eqref{eq:expanded_sys} denotes differentiation with respect to the new (transformed) time.} As described in \cite[p.38]{Jelbart2020d}, the function $L(x,y,\alpha)$ can be chosen such that local orbit segments of $Z^-(x,y,\alpha)$ are given by level sets $L(x,y,\alpha)=const$. The quantities $a(\alpha), b(\alpha), c(\alpha), d(\alpha)$ are smooth functions of $\alpha$ such that $a(0) = a, \ b(0) = b, \ c(0) = c, \ d(0) = d$ are constant, $s_i(\alpha)$, $\varphi_i(y,y,\alpha)$, $i=1,2,$ are smooth functions satisfying $s_i(0) = 0, \ i = 1,2$, $s_2'(0) > 0$, and $||\varphi_i(u,y,\alpha)|| = \mathcal O (||(u,y,\alpha)||^2)$. Note that due to the transformation of time, the orientation is reversed if $Z_2^-((0,0),0) < 0$.

System \eqref{eq:expanded_sys} inherits a BEB at $(u,y)=(0,0)$ for $\alpha=0$, of the same topological type as the BE bifurcation in the original system \eqref{eq:general}. In particular, it follows from Definition \ref{def:beb} that the following nondegeneracy conditions are satisfied:
\[
\tau := a + d \neq 0 , \qquad \delta := ad-bc \neq 0, \qquad \tau^2 - 4\delta \neq 0 .
\]
Indeed, the requirement that the eigenvectors $v_\pm(\alpha_{bf})$ in Definition \ref{def:beb} are transverse to $\Sigma$ ensures that either $b \neq 0$, $c \neq 0$ or both $b,c \neq 0$.\footnote{This is immediate for BF bifurcations; transverse intersection of $v_\pm(\alpha_{bf})$ and $\Sigma$ is only required for the BN and BS bifurcations.} Without loss of generality we may assume that $c > 0$.\footnote{The alternative $c < 0$ leads to an equivalent normal form which can be obtained from system \eqref{eq:normal_form} via $(x,\mu) \mapsto (-x,-\mu)$ as described in \cite[Rem.~2.7]{Jelbart2020d}.} System \eqref{eq:normal_form} is obtained from \eqref{eq:expanded_sys} after making a linear coordinate transformation
\[
v = c (u + w(\alpha)) + d y , \qquad u = \frac{1}{c} \left( v - d y\right) - w(\alpha) ,
\]
where $w : I_\alpha \to \mathbb R$ is a smooth function satisfying $w(0) = 0$, and a suitable application of the inverse function theorem. This leads to the system
\[
\begin{split}
\dot v &= d + \phi\left(y \epsilon^{-1} \right) \left( -d + \mu + \tau v - \delta y + \tilde \theta_1(v,y,\alpha) \right),  \\
\dot y &= 1 + \phi\left(y \epsilon^{-1} \right) \left( -1 + v + \tilde \theta_2(v,y,\alpha) \right) ,
\end{split}
\]
where we have defined a new parameter
\[
\mu(\alpha) := cs_1(\alpha) + ds_2(\alpha) - c\tau w(\alpha) + \mathcal O(\alpha^2) ,
\]
and $w(\alpha) = (s_2'(0)/c)\alpha + \mathcal O(\alpha^2)$, see again \cite[p.38]{Jelbart2020d} for details. Notice that $\mu'(0) \neq 0$ by the determinant condition in \eqref{eq:beb_conds}, so that $\mu = \mu(\alpha)$ is invertible with inverse $\alpha(\mu)$ such that $\alpha(0)=0$ and $\alpha'(0)\neq 0$. Hence we may define $\theta_i(v,y,\mu) : = \theta_i(v,y,\alpha(\mu))$ for $i=1,2$. Setting $\gamma := \tau - d$ and (by a slight abuse of notation) $v=x$, this yields the form in \eqref{eq:normal_form}.

Finally, the Filippov/sliding vector field $X_{sl}(x,\mu)$ in \eqref{eq:Filippov_VF} is obtained directly from system \eqref{eq:PWS_normal_form} using the formula \eqref{eq:Filippov_x} with $Z^\pm = X^\pm$. The form for $\gamma$ is motivated by
\[
\gamma = X_{sl}'(0,0) .
\]
%
%

\section{Proof of Lemma \ref{lem:desing_ext}}
\label{app:bu_eqns}

We focus on \eqref{eq:checkeps1eqs}, the details of \eqref{eq:checkrho1eqs} being almost identical \SJ{and} therefore left out. 
To obtain \eqref{eq:checkeps1eqs} we insert \eqref{eq:checkeps1} into the extended system $\{(x',y')=\epsilon X(x,y,\mu,\epsilon),\epsilon'=0,\mu'=0\}$. The easiest way to do this is to 
use the fact that \eqref{eq:checkeps1} is the composition of three mappings defined by 
\eqref{eq:K1}, \eqref{eq:hatr1} and finally
\begin{equation}\label{eq:checkeps1f}
\begin{aligned}
x_2 &=\nu_1^{k(1+k)} x_1,\\
\rho_2 &=\nu_1^{k+1}\rho_1,\\
\epsilon_2 &=\nu_1^{k+1},\\
\mu &=\nu^{2k(1+k)}\mu_1.
\end{aligned}
\end{equation}
We therefore compute the resulting equations in turn. First, we insert \eqref{eq:K1} into $\{(x',y')=\epsilon X(x,y,\mu,\epsilon),\epsilon'=0,\mu'=0\}$. This gives
\begin{equation}\label{eq:bary1eqs}
\begin{aligned}
x' &=r_1 \left[(1 - \epsilon_1^k \phi_+(\epsilon_1)) (\mu+\tau x-\delta r_1 +\theta_1(x,r_1,\mu))-\epsilon_1^k\phi_+(\epsilon_1)(\gamma-\mu)\right] ,\\
r_1'&=r_1 \left\{\left(1 - \epsilon_1^k \phi_+(\epsilon_1)\right)\left(x+\theta_2(x,r_1,\mu)\right)+\epsilon_1^k \phi_+(\epsilon_1)\right\},\\
\epsilon_1' &=-\epsilon_1\left\{\left(1 - \epsilon_1^k \phi_+(\epsilon_1)\right)\left(x+\theta_2(x,r_1,\mu)\right)+\epsilon_1^k \phi_+(\epsilon_1)\right\},
\end{aligned}
\end{equation}
and $\mu'=0$, upon desingularizing through the division by $\epsilon_1$ on the right hand side.
Subsequently, we insert \eqref{eq:hatr1} into \eqref{eq:bary1eqs}. This gives
\begin{equation}\label{eq:bary1epseqs}
\begin{aligned}
x_2' &= a_2(x_2,\rho_2,\epsilon_2,\mu))-\frac12 x_2 b_2(x_2,\rho_2,\epsilon_2,\mu),\\
\rho_2' &= \frac{1}{2k(1+k)} \rho_2 b_2(x_2,\rho_2,\epsilon_2,\mu),\\
\epsilon_2' &= -\frac{2k+1}{2k} \epsilon_2 b_2(x_2,\rho_2,\epsilon_2,\mu),
\end{aligned}
\end{equation}
along with $\mu'=0$, upon desingularization through the division by $\rho_1^{k(1+k)}$ on the right hand side, where
\begin{align*}
a_2(x_2,\rho_2,\epsilon_2,\mu) &= (1 - \rho_2^{k(1+k)}\epsilon_2^k \phi_+(\rho_2^{1+k}\epsilon_2)) \bigg(\mu+\tau \rho_2^{k(1+k)}x_2-\delta \rho_2^{2k(1+k)} \\
&+\theta_1(\rho_2^{k(1+k)} x_2,\rho_2^{2k(1+k)},\mu)\bigg)-\rho_2^{k(1+k)}\epsilon_2^k\phi_+(\rho_2^{1+k}\epsilon_2)(\gamma-\mu),\\
b_2(x_2,\rho_2,\epsilon_2,\mu) &= (x_2+\rho_2^{-k(1+k)} \theta_2(\rho_2^{k(1+k)}x_2,\rho_2^{2k(1+k)},\mu))(1-\rho_2^{k(1+k)}\epsilon_2^k \phi_+(\rho_2^{1+k}\epsilon_2))\\
&+\epsilon_2^k \phi_+(\rho_2^{1+k}\epsilon_2).
\end{align*}
Notice by Theorem \ref{theorem:prop_normal_form} that $\rho_2^{-k(1+k)} \theta_2(\rho_2^{k(1+k)}x_2,\rho_2^{2k(1+k)},\mu)$ in these expressions 
has a smooth extension to $\rho_2=0$. In particular, $a_2(x_2,0,\epsilon_2,0) = 0$ \SJ{and} $b_2(x_2,0,\epsilon_2,0) =x_2+\epsilon_2^k \phi_+(0)=x_2+\epsilon_2^k\beta$, recall \eqref{eq:beta}.
Finally, we insert \eqref{eq:checkeps1f} into \eqref{eq:bary1epseqs}. This produces the final result upon dividing the right hand side by the common factor $\nu_1^{k(1+k)}$. 

\

The conservation of the quantities \eqref{eq:scaling1} and \eqref{eq:scaling2} follows from \eqref{eq:scaling} by setting $\check \epsilon=1$ and $\check \rho=1$ respectively.

\section{Proof of Lemma \ref{lem:coord_trans}}
\label{app:coord_trans}

Consider \eqref{eq:checkrho1eqs} within $\epsilon_2=0$. Then $\Psi_2$, defined as the $\epsilon_2=0$ restriction of \eqref{eq:checkrho1}:
\begin{align*}
\Psi_2:\,(x_2,\nu_2,\mu_2)\mapsto \begin{cases}
x = \nu_2^{2k(1+k)} x_2,\\
y =\nu_2^{2k(1+k)},\\
\mu =\nu_2^{2k(1+k)},
\end{cases}
\end{align*}
gives a smooth topological equivalence between \eqref{eq:checkrho1eqs}$_{\epsilon_2=0}$ and the $\mu$-extended system $\{(x',y')=X^+(x,y,\mu),\mu'=0\}$ on $\{y>0\}$. The latter system, since $X^+$ just has a hyperbolic and unstable node for $\mu$ small enough, is itself smoothly conjugated, see e.g.~\cite{Kuznetsov2013}, to the linearization:
\begin{equation}\label{eq:Xpluslin1}
\begin{aligned}
x' &= \mu + \tau x-\delta y,\\
y' &= x,\\
\mu'&=0,
\end{aligned}
\end{equation}
near  $(x,y,\mu)=(0,0,0)$. 
It is then obvious that $\Psi_2^{-1}$ gives a smooth topological equivalence between \eqref{eq:Xpluslin1} and the system \eqref{eq:checkrho1eqs}$_{\epsilon_2=0}$ with $\theta_i\equiv 0$ on $\nu_2>0$.  Putting this together, we have a smooth diffeomorphism
\begin{align*}
(\tilde x_2,\tilde \nu_2,\tilde \mu_2)\mapsto \begin{cases}
x_2 = \tilde x_2+\mathcal O_1(2),\\
\nu_2 = \tilde \nu_2 ( 1+\mathcal O_2(1)),\\
\mu_2 = \tilde \mu_2(1+\mathcal O_3(1)),
\end{cases}
\end{align*}
bringing \eqref{eq:checkrho1eqs}$_{\epsilon_2=0}$ on $\{\nu_2>0\}$ into the same form with $\theta_i\equiv 0$, upon a regular reparameterizations of time. It is straightforward to show that this mapping extends smoothly to $\{\tilde \nu_2=0\}\Leftrightarrow \{\nu_2=0\}$. We therefore define $\tilde \epsilon_2$ by the condition 
\begin{align*}
\epsilon &= \tilde \nu_2^{2(1+k)^2} \tilde \epsilon_2 = \nu_2^{2(1+k)^2} \epsilon_2,
\end{align*}
i.e. 
\begin{align*}
\tilde \epsilon_2 = \epsilon_2 ( 1+\mathcal O_2(1))^{-2(1+k)^2}. 
\end{align*}
Applying the diffeomorphism \SJ{$(\tilde x_2,\tilde \epsilon_2,\tilde \nu_2,\tilde \mu_2)\mapsto (x_2,\epsilon_2,\nu_2,\mu_2)$}, defined in this way, then gives
\begin{equation}\label{eq:normalized1}
\begin{aligned}
x_2' &= -x_2-\frac{\mu_2+\tau x_2 -\delta}{x_2}+\epsilon_2^k h_2(x_2,\epsilon_2,\nu_2,\mu_2) , \\
\epsilon_2' &= -\frac{k+1}{k}  \epsilon_2, \\
\nu_2' &= \frac{\nu_2}{2k(1+k)},\\
\mu_2' &=-\mu_2,
\end{aligned}
\end{equation}
upon dropping the tildes, 
for some smooth function $h_2$, upon dividing the right hand side by a positive factor near $x_{2,w}$ (notice in particular that we obtain \eqref{eq:normalized1} for $\epsilon_2=0$ by dividing the right hand side of \eqref{eq:checkrho1eqs} with $\theta_i\equiv 0$ by $g_2$, which is positive near $(x_{2,w},0,0,0)$).



Next, consider \eqref{eq:normalized1} within $\nu_2=\mu_2=0$:
\begin{align*}
x_2' &= -x_2-\frac{\mu_2+\tau x_2-\delta}{x_2}+\mathcal O(\epsilon_2^k),\\
\epsilon_2' &=-\frac{1+k}{k}\epsilon_2.
\end{align*}
For this sub-system, $(x_2,\epsilon_2)=(x_{2,w},0)$ is a hyperbolic saddle, the linearization having $\lambda,-(1+k)/k$ as eigenvalues, with $\lambda$ given as in Theorem \ref{thm:bn3}. 
Consequently, by Belitskii's theorem \SJ{\cite{Belitskii1973}, see also \cite{Homburg2010},} there exists a $C^1$-linearization of the form
\begin{align}
(\tilde x_2,\epsilon_2)\mapsto x_2=h(\tilde x_2,\epsilon_2),\label{eq:x2epslin}
\end{align}
with $h_2(0,0)=x_{2,w}$ such that 
\begin{align*}
x_2' &=\lambda x_2,\\
\epsilon_2' &=-\frac{1+k}{k}\epsilon_2,
\end{align*}
upon dropping the tildes. Lifting \eqref{eq:x2epslin} to the full space, we finally obtain
\eqref{eq:normalized2}. The order of the remainder easily follows from the expressions for $f_2$ and $g_2$.

\end{document}